\def\Z{\mathbb{Z}}
\def\R{\mathbb{R}}
\def\N{\mathbb{N}}
\def\T{\mathbb{T}}
\def\epsilon{\varepsilon}
\newcommand{\be}{\begin{equation}}
\newcommand{\ee}{\end{equation}}
\newcommand{\baa}{\begin{array}}
\newcommand{\eaa}{\end{array}}
\newcommand{\ba}{\begin{eqnarray}}
\newcommand{\ea}{\end{eqnarray}}
\newtheorem{theorem}{Theorem}[section]
\newtheorem{lemma}[theorem]{Lemma}
\newtheorem{definition}[theorem]{Definition}
\newtheorem{remark}[theorem]{Remark}
\numberwithin{equation}{section}
\newenvironment{proof}[1][Proof]{\noindent\textbf{#1.} }{\hfill $\Box$}
\begin{document}
\date{}
\title{\bf{Propagating speeds of bistable transition fronts in spatially periodic media}}
\author{Hongjun GUO \thanks{The author was supported by the China Scholarship Council for 3 years of study at Aix Marseille Universit\'e.}\\
\\
\footnotesize{Aix Marseille Univ, CNRS, Centrale Marseille, I2M, Marseille, France}}
\maketitle

\begin{abstract}
\noindent{This paper is concerned with the propagating speeds of transition fronts in $\R^N$ for spatially periodic bistable reaction-diffusion equations. The notion of transition fronts generalizes the standard notions of traveling fronts. Under the a priori assumption that there exist pulsating fronts for every direction $e$ with nonzero speeds, we show some continuity and differentiability properties of the front speeds and profiles with respect to the direction~$e$. Finally, we prove that the propagating speed of any transition front is larger than the infimum of speeds of pulsating fronts and less than the supremum of speeds of pulsating fronts.}
\vskip 0.1cm
\noindent\textit{Keywords. Pulsating fronts; Transition fronts; Spatially periodic reaction-diffusion equations; Propagating speeds.}

\end{abstract}


\section{Introduction}
\noindent
In this paper, we study the propagating speeds of transition fronts of spatially periodic reaction-diffusion equations of the type
\begin{equation}\label{eq1.1}
u_t=\Delta u+f(x,u),\quad\,(t,x)\in\R\times\R^N,
\end{equation}
where $u_t=\frac{\partial u}{\partial t}$ and $\Delta$ denotes the Laplace operator with respect to the space variables $x\in\R^N$.

Throughout this paper, we assume that the reaction term $f(x,u)$ is $\Z^N$-periodic with respect to $x$. To be more precise, we denote by $\T^N=\R^N/\Z^N$ the $N$-dimensional torus. We assume that the function $f:\T^N\times \R\rightarrow \R$ is continuous, $C^{\alpha}$ in $x$ uniformly with respect to $u\in\R$ with $\alpha\in (0,1)$, of the class $C^2$ in $u$ uniformly with respect to $x\in\T^N$ while the partial derivatives $f_u(x,u)=\partial_u f(x,u)$, $f_{uu}(x,u)=\partial_{uu} f(x,u)$ are Lipschitz continuous in $u$, on $\T^N\times\R$. Moreover, we assume that, for every $x\in\R^N$, the profile $f(x,\cdot)$ is bistable in $[0,1]$, that is, there is $\theta_x\in(0,1)$ such that
\begin{align}\label{F1}
f(x,0)=f(x,1)=f(x,\theta_x)=0,\quad f(x,\cdot)<0\,\text{ on }\, (0,\theta_x),\quad f(x,\cdot)>0\,\text{ on }\, (\theta_x,1).
\end{align}
We also assume that $0$ and $1$ are uniformly (in $x$) stable zeroes of $f(x,\cdot)$, in the sense that there exist $\gamma>0$ and $\sigma\in(0,1/2)$ such that
\begin{equation}\label{F2}
-f_u(x,u)\ge \gamma\,\text{ for all $(x,u)\in\R^N\times[0,\sigma]$ and $(x,u)\in\R^N\times[1-\sigma,1]$}.
\end{equation}
Notice that this implies in particular that $\sigma<\theta_x<1-\sigma$. For mathematical convenience, we assume that $f(x,u)=f_u(x,0)u$ for $(x,u)\in\R^N\times(-\infty,-u_0)$ and $f(x,u)=f_u(x,1)(u-1)$ for $(x,u)\in \R^N\times(1+u_0,+\infty)$ for some positive $u_0$, $-f_u(x,u)\ge \gamma$ for all  $(x,u)\in\R^N\times(-\infty,\sigma]$ and $(x,u)\in\R^N\times[1-\sigma,+\infty)$ and $f(x,u)$, $f_u(x,u)$, $f_{uu}(x,u)$ are globally Lipschitz-continuous in $u$ uniformly in $x\in\R^N$.

The cubic nonlinearity is a typical case of such a function $f$ satisfying \eqref{F1} and \eqref{F2}, that is,
\begin{align}\label{theta}
f(x,u)=u(1-u)(u-\theta_x),
\end{align}
where $0<\theta_x<1$ is a $\Z^N$-periodic $C^{\alpha}(\R^N)$ function with respect to $x$. Moreover, the intermediate zero $\theta_x$ of $f(x,\cdot)$ in \eqref{theta} or more generally in \eqref{F1} is not assumed to be constant in general.

Our main purpose in this paper is to study the propagating speeds of transition fronts which are some classical solutions connecting the two stable states $0$ and $1$. A standard group of transition fronts are so-called pulsating, or periodic fronts for our spatially periodic reaction-diffusion equations. Let us recall the definition of a pulsating front which can be referred to \cite{SKT,X1,X2,X3}.

\begin{definition}[Pulsating fronts]\label{PF}
A pair $(U_e,c_e)$ with $U_e:\R\times\T^N\rightarrow \R$ and $c_e\in\R$ is said to be a pulsating front of \eqref{eq1.1} with effective speed $c_e$ in the direction $e\in\mathbb{S}^{N-1}$ connecting $0$ and $1$ if the two following conditions are satisfied:
\begin{itemize}
\item[(i)] The map $u(t,x):=U_e(x\cdot e-c _e t,x)$ is an entire (classical) solution of the parabolic equation \eqref{eq1.1}.
\item[(ii)] The profile $U_e$ satisfies
$$\lim_{\xi\rightarrow +\infty} U_e(\xi,y)=0,\ \lim_{\xi\rightarrow -\infty} U_e(\xi,y)=1,\,\text{ uniformly for $y\in\T^N$}.$$
\end{itemize}
\end{definition}

Notice that if $(U_e(\xi,y),c_e)$ is a pulsating front of \eqref{eq1.1} in the direction $e\in\mathbb{S}^{N-1}$, then it satisfies the limit condition (ii) in the above definition as well as, if $c_e\neq 0$, the semi-linear elliptic degenerate equation
\begin{align}\label{Ue}
c_e \partial_{\xi} U_e +\partial_{\xi\xi} U_e +2\nabla_y \partial_{\xi} U_e\cdot e+\Delta_y U_e +f(y,U_e)=0,\,\text{ for all $(\xi,y)\in\R\times\T^N$}.
\end{align}

Note that the notion of pulsating front with nonzero speed was first given in \cite{SKT} and further developed in \cite{BH0,X1,X2,X3}. According to these references, it is said that an entire solution $u(t,x)$ of \eqref{eq1.1} is called a pulsating  traveling wave solution in the direction $e\in\mathbb{S}^{N-1}$ and effective speed $c\neq 0$ if it satisfies the following two conditions
\begin{itemize}
\item[(i)] $u(t+\frac{k\cdot e}{c},x)=u(t,x-k)$, for all $k\in\Z^N$ and $(t,x)\in\R\times\R^N$,
\item[(ii)] $\lim_{r\rightarrow +\infty} u(t,r e+y)=0$, $\lim_{r\rightarrow -\infty} u(t,r e+y)=1$, for all $t\in\R$ and $y\in\R^N$.
\end{itemize}
Notice that when the effective speed is nontrivial, this definition is equivalent to Definition~\ref{PF}. In fact, if $(U_e,c_e)$ is a pulsating front with $c_e\neq 0$ in sense of Definition \ref{PF}, $u(t,x)=U(x\cdot e-c_e t,x)$ becomes a pulsating front in sense of \cite{BH0,SKT,X1,X2,X3}. Conversely if $u(t,x)$ is a pulsating front in the direction $e\in\mathbb{S}^{N-1}$ and the effective speed $c\neq 0$, then so is $U(\xi,x):=u(\frac{x\cdot e-\xi}{c},x)$ in the sense of Definition \ref{PF} with $c_e=c$.

Now we review some known existence results on standard traveling waves. In homogeneous case, Aronson and Weinberger \cite{AW} and Fife and Mcleod \cite{FM} have studied the existence and nonexistence of traveling fronts $\phi(x-ct)$ for one-dimensional equation
$$u_t-u_{xx}=f(u)$$
where $f$ is bistable. Especially, if $f$ simply satisfies $f(0)=f(1)=0$, $f<0$ on $(0,\theta)$ and $f>0$ on $(\theta,1)$, it is known to exist a traveling front $\phi(x-ct)$ satisfying
\begin{eqnarray*}
\left\{\begin{aligned}
&\phi''+c\phi'+f(\phi)=0 \text{ in $\R$},\\
&0<\phi<1 \text{ in $\R$, }\,\phi(-\infty)=1\,\text{ and }\,\phi(+\infty)=0.
\end{aligned}
\right.
\end{eqnarray*}
Notice that the propagating speed $c$ has the sign of $\int_0^1 f(u)du$ and the profile $\phi$ is unique up to shifts. For higher dimensions $N\geq 2$, an immediate extension of one-dimensional traveling fronts consists in planar traveling fronts
$$u(t,x)=\phi(x\cdot e-c t)$$
for any given unit vector $e$ of $\mathbb{R}^N$, where $(c,\phi)$ are as above. We denote the level sets by $\{x\in\R^N; u(t,x)=r\}$ for $0<r<1$ and any $t\in\R$. Then, the level sets of planar fronts are parallel hyperplanes which are orthogonal to the propagating direction $e$. We also notice that the profiles of these fronts are invariant as they propagate with speed $c$ in the direction $e$. The existence and uniqueness of these fronts can be referred to the one-dimensional traveling fronts. Besides, in $\R^N$ with $N\ge2$, more general traveling fronts exist, which have non-planar level sets. For instance, conical-shaped axisymmetric non-planar fronts are known to exist for some $f$, see \cite{CGHNR,HMR1,NT1}. Fronts with non-axisymmetric shapes, such as pyramidal fronts, are also known to exist, see \cite{T1,T3}. For qualitative properties of these traveling fronts, we refer to \cite{HM,HMR1,HMR2,NT1,NT2,RRM,T2,T3}.

For explicit spatially periodic dependence, only few results has been obtained in the bistable case. We may refer to the works of Xin \cite{X1,X2,X3} who used refined perturbation arguments to obtain the existence of waves for such periodic equations
\be\label{1.6}
u_t=\sum_i (a(x)u_{x_i})_{x_i} +\sum_i b_i(x)u_{x_i}+f(x,u)
\ee
when the diffusivity matrix $a$ is close to identity and $f$ is independent of $x$. For one dimensional case of \eqref{1.6} when $f(x,u)=g(x)f(u)$ with $0<g_1\le g\le g_2<+\infty$ in $\R$ and $\int_0^1 \min_{[0,1]} f(\cdot,u)du>0$, Nolen and Ryzhik \cite{NR4} proved the existence of pulsating fronts with nonzero speed. Furthermore, if the solutions of \eqref{1.6} with some compactly supported initial conditions can converge locally uniformly to $1$ as $t\rightarrow +\infty$, there exist pulsating fronts with a positive speed for \eqref{1.6}, see \cite{DGM}. Ding et al \cite{DHZ} also obtained some existence results of pulsating fronts for one-dimensional reaction-diffusion equations in a periodic habitat. More precisely, they proved that pulsating fronts exist for small period and large period by applying the implicit function theorem and abstract results of Fang and Zhao \cite{FZ} and they got that the speed has the sign of $\int_{\T^N\times[0,1]} f(x,u)dxdu$ when the speed is not zero. For one dimensional \eqref{eq1.1} with spatially inhomogeneous mixed bistable-ignition reactions, Zlato{\v{s}} \cite{Z3} proved that there exists a unique, up to shifts, right-facing (or left-facing) transition front which is increasing in time. Meantime, he found a periodic pure bistable reaction such that there is no transition front of \eqref{eq1.1}. Thus, pulsating fronts with nonzero speed do not exist in general, we also refer to \cite{DHZ,X4,XZ}.

Throughout this paper, we assume that
\begin{itemize}
  \item[(A1)] $\int_{\T^N\times[0,1]} f(x,u)dxdu\neq 0$,
  \item[(A2)] for any direction $e\in\mathbb{S}^{N-1}$, there is a pulsating front $(U_e,c_e)$ with $c_e\neq 0$ satisfying Definition \ref{PF}.
\end{itemize}
From the result of Ducrot \cite{D} and our Lemma \ref{lemma2.2} in Section 2, it follows that the speed $c_e$ for each direction $e\in\mathbb{S}^{N-1}$ has the sign of $\int_{\T^N\times[0,1]} f(x,u)dxdu$ once the assumptions (A1), (A2) hold. Thus, without loss of generality, one can assume that $\int_{\T^N\times[0,1]} f(x,u)dxdu>0$, that is, $c_e>0$ for all $e\in\mathbb{S}^{N-1}$. In fact, if $c_e<0$ for all $e\in\mathbb{S}^{N-1}$, one can replace $u$, $f$, $U_e(\xi,y)$ by $\tilde{u}=1-u$, $g(x,u)=-f(x,1-u)$, $\tilde{U}_e(\xi,y)=1-U_e(-\xi,y)$ and then, the new pulsating front~$\tilde{U}_e$ propagates with speed $-c_e>0$. From \cite{BH2} and Lemmas \ref{lemma2.1} and \ref{lemma2.4} in Section 2, for any direction $e\in\mathbb{S}^{N-1}$, the speed $c_e$ is then unique and the pulsating front $U_e$ is then unique up to shifts in time.

As we emphasized, even for homogeneous case, there are many types of traveling fronts in higher dimension such as standard planar fronts, conical-shaped axisymmetric non-planar fronts, pyramidal fronts and so on. More complicated structured fronts exist for spatially periodic reaction-diffusion equations. A one-dimensional example can be refer to \cite{DHZ1}, in which the authors established a new type of transition fronts which are not pulsating fronts. Even if the types of traveling fronts are various, there are some common properties shared by them. For all of them, the solutions $u$ converge to the stable states $0$ or $1$ far away from their moving or stationary level sets, uniformly in time. This fact led to the introduction of a more general notion of traveling fronts, that is, transition fronts, see \cite{BH1,BH2} and see~\cite{S} in the one-dimensional setting. In order to recall the notion of transition fronts, let us introduce a few notations. First, for any two subsets $A$ and $B$ of $\mathbb{R}^N$ and for $x\in\R^N$, we set
\begin{equation*}
d(A,B)=\inf\big\{|x-y|;\ (x,y)\in A\times B\big\}
\end{equation*}
and $d(x,A)=d(\{x\},A)$, where $|\cdot|$ is the Euclidean norm in $\mathbb{R}^N$. Consider two families $(\Omega_t^-)_{t\in \mathbb{R}}$ and $(\Omega_t^+)_{t\in \mathbb{R}}$ of open nonempty subsets of $\mathbb{R}^N$ such that
\begin{eqnarray}\label{eq1.3}
\forall t\in \mathbb{R},\ \ \left\{
\begin{aligned}
&\Omega_t^-\cap \Omega_t^+=\emptyset,\\
&\partial \Omega_t^-=\partial \Omega_t^+=:\Gamma_t,\\
&\Omega_t^-\cup \Gamma_t \cup \Omega_t^+=\mathbb{R}^N,\\
&\sup\{d(x,\Gamma_t);\ \ x\in \Omega_t^+\}=\sup\{d(x,\Gamma_t);\ \ x\in \Omega_t^-\}=+\infty
\end{aligned}
\right.
\end{eqnarray}
and
\begin{eqnarray}\label{eq1.4}
\left\{
\begin{aligned}
&\inf\Big\{\sup\big\{d(y,\Gamma_t);\ y\in \Omega_t^+,\ |y-x|\leq r\big\};\ \ t\in \mathbb{R},\ \ x\in \Gamma_t\Big\}\rightarrow +\infty\\
&\inf\Big\{\sup\big\{d(y,\Gamma_t);\ y\in \Omega_t^-,\ |y-x|\leq r\big\};\ \ t\in \mathbb{R},\ \ x\in \Gamma_t\Big\}\rightarrow +\infty
\end{aligned}
\right.
\text{ as}\ \ r\rightarrow +\infty.
\end{eqnarray}
From the condition~\eqref{eq1.3}, we notice that the interface $\Gamma_t$ is not empty for every $t\in \mathbb{R}$. As far as~\eqref{eq1.4} is concerned, it says that for any $M>0$, there is $r_M>0$ such that for any $t\in \mathbb{R}$ and $x\in \Gamma_t$, there are $y^{\pm}\in \mathbb{R}^N$ such that
\begin{eqnarray}\label{eq1.5}
y^{\pm}\in \Omega^{\pm}_t,\ \ |x-y^{\pm}|\leq r_M\ \ \text{and}\ \ d(y^{\pm},\Gamma_t)\geq M.
\end{eqnarray}
that is, $y^{\pm}\in \overline{B(x,r_M)}$ and $B(y^{\pm},M)\subset \Omega_t^{\pm}$, where $B(y,r)$ denotes the open Euclidean ball of center $y$ and radius $r>0$. Moreover, the sets $\Gamma_t$ are assumed to be made of a finite number of graphs: there is an integer $n\geq 1$ such that, for each $t\in \mathbb{R}$, there are $n$ open subsets $\omega_{i,t}\subset \mathbb{R}^{N-1}$(for $1\leq i\leq n$), $n$ continuous maps $\psi_{i,t}: \omega_{i,t}\rightarrow \mathbb{R}$ and $n$ rotations $R_{i,t}$ of $\mathbb{R}^N$, such that
\begin{equation}\label{eq1.6}
\Gamma_t \subset \bigcup_{1\leq i\leq n} R_{i,t}\left(\{x\in \mathbb{R}^N; \ \ x'\in \omega_{i,t},\ \ x_N=\psi_{i,t}(x')\}\right).
\end{equation}

\begin{definition}\label{TF}{\rm{\cite{BH1,BH2}}}
For problem~\eqref{eq1.1}, a transition front connecting $0$ and $1$ is a classical solution $u:\mathbb{R}\times\mathbb{R}^N \rightarrow (0,1)$ for which there exist some sets $(\Omega_t^{\pm})_{t\in \mathbb{R}}$ and $(\Gamma_t)_{t\in \mathbb{R}}$ satisfying~\eqref{eq1.3},~\eqref{eq1.4} and~\eqref{eq1.6}, and, for every $\varepsilon>0$, there exists $M_{\varepsilon}>0$ such that
\begin{eqnarray}\label{eq1.7}
\left\{\begin{aligned}
&\forall t\in \mathbb{R},\ \ \forall x\in \Omega_t^+, \ \ \left(d(x,\Gamma_t)\geq M_{\varepsilon}\right)\Rightarrow \left(u(t,x)\geq 1-\varepsilon\right)\!,\\
&\forall t\in \mathbb{R},\ \ \forall x\in \Omega_t^-, \ \ \left(d(x,\Gamma_t)\geq M_{\varepsilon}\right)\Rightarrow \left(u(t,x)\leq \varepsilon\right)\!.
\end{aligned}
\right.
\end{eqnarray}
Furthermore, $u$ is said to have a global mean speed $\gamma$ $(\geq 0)$ if
\begin{equation*}
\frac{d(\Gamma_t,\Gamma_s)}{|t-s|}\rightarrow \gamma \ \ \text{as}\ \ |t-s|\rightarrow +\infty.
\end{equation*}
\end{definition}

This definition has been shown in~\cite{BH1,BH2,H} to cover and unify all classical cases. Moreover, it was proved in~\cite{H} that, under some assumptions on $f$, any almost-planar transition front (in the sense that, for every $t\in\R$, $\Gamma_t$ is a hyperplane) connecting $0$ and $1$ is truly planar, and that any transition front connecting $0$ and $1$ has a global mean speed $\gamma$, which is equal to $|c_f|$. Non-standard transition fronts which are not invariant in any moving frame as time runs were also constructed in~\cite{H}. For other properties of bistable transition fronts, we refer to \cite{BH1,BH2,H}. There is now a large literature devoted to transition fronts in various homogeneous or heterogeneous settings or for other reaction terms, see e.g.~\cite{BHM,D,HR1,HR2,MNRR,MRS,N,NR1,NR2,NRRZ,NR4,S2,SS,Z1,Z2,Z3}.

Now, we present our results in this paper. Our first result is about the continuity of the speed $c_e$ and the profile $U_e$ with respect to $e\in\mathbb{S}^{N-1}$. Here, we can refer to \cite{AG} for the ignition type, in which the authors proved the continuity of the speed and the profile of the pulsating front with respect to the propagating direction.

\begin{theorem}\label{th1}
Assume that (A1), (A2) hold and $c_e>0$ for any $e\in\mathbb{S}^{N-1}$. Then, the speed $c_e$ and the profile $U_e$ are continuous with respect to $e\in\mathbb{S}^{N-1}$ under a normalization of the profile $U_e$, that is, $\int_{\R^+\times\mathbb{T}^N} U_e^2(\xi,y)dyd\xi=1$ for all $e\in\mathbb{S}^{N-1}$.
\end{theorem}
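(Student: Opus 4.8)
The plan is to argue by compactness and a uniqueness argument. Fix a sequence $e_n\to e$ in $\mathbb{S}^{N-1}$ and let $(U_{e_n},c_{e_n})$ be the corresponding normalized pulsating fronts. The first step is to establish uniform bounds on the speeds: I would show that $0<\underline{c}\le c_{e_n}\le \overline{c}<+\infty$ for constants independent of $n$. The upper bound should follow from a comparison argument against a supersolution built from a one-dimensional bistable front (or from parabolic estimates applied to the moving-frame solution $u_n(t,x)=U_{e_n}(x\cdot e_n-c_{e_n}t,x)$, whose level sets move with speed $c_{e_n}$ in direction $e_n$), while the strict positivity of the lower bound is the point where I expect genuine work: a priori $c_{e_n}$ could degenerate to $0$. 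To rule this out I would use assumption (A1) together with Lemma~\ref{lemma2.2} (the sign/quantitative relation between the speed and $\int_{\T^N\times[0,1]}f$) — ideally Lemma~\ref{lemma2.2} already gives a quantitative lower bound on $|c_e|$ depending only on $f$, in which case the lower bound is immediate; otherwise one argues that if $c_{e_n}\to 0$ one can pass to the limit (after the compactness step below) and obtain a stationary front, contradicting (A1) via the standard integration-by-parts identity $c_e\int_{\R\times\T^N}|\partial_\xi U_e|^2 = \int_{\T^N\times[0,1]}f$ type relation.

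The second step is the compactness of the profiles. With $c_{e_n}$ bounded above and below, the moving-frame equation \eqref{Ue} is uniformly parabolic in the $(\xi,y)$ variables (with coefficients controlled by $c_{e_n}$ and the fixed periodic nonlinearity $f$), so standard interior parabolic/elliptic Schauder estimates give uniform $C^{2,\alpha}_{loc}$ bounds on $U_{e_n}$ on $\R\times\T^N$. Up to extraction, $c_{e_n}\to c_\infty\in[\underline{c},\overline{c}]$ and $U_{e_n}\to U_\infty$ in $C^2_{loc}$, where $U_\infty$ solves \eqref{Ue} with speed $c_\infty$ and direction $e$. The delicate point here is to preserve the limit conditions (ii) of Definition~\ref{PF} — i.e. that $U_\infty(\xi,y)\to 0$ as $\xi\to+\infty$ and $\to 1$ as $\xi\to-\infty$ uniformly in $y$, rather than $U_\infty$ degenerating to a constant $0$ or $1$. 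For this I would invoke uniform (in $n$) decay estimates on $U_{e_n}$ near $\xi=\pm\infty$: such estimates follow from the uniform stability hypothesis \eqref{F2} (the states $0,1$ are uniformly stable, $-f_u\ge\gamma$ near them) via a standard sliding/exponential-barrier argument, and they can be made uniform in $n$ precisely because the barriers depend only on $\gamma$, $\sigma$ and the upper bound $\overline{c}$ on the speed. The normalization $\int_{\R^+\times\T^N}U_{e_n}^2=1$ then passes to the limit by dominated convergence (using the uniform decay to control the tail), so $\int_{\R^+\times\T^N}U_\infty^2=1$; in particular $U_\infty$ is not the trivial front, confirming it is a genuine pulsating front for direction $e$.

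The third step is to identify the limit and conclude. By assumption (A2) there is a pulsating front $(U_e,c_e)$ for direction $e$, and by the uniqueness results quoted in the introduction (from \cite{BH2} and Lemmas~\ref{lemma2.1},~\ref{lemma2.4}) the speed is unique and the profile is unique up to shift in $\xi$; imposing the $L^2$-normalization on $\R^+\times\T^N$ pins down the shift, so in fact $(U_\infty,c_\infty)=(U_e,c_e)$. Since every subsequence of $(U_{e_n},c_{e_n})$ has a further subsequence converging to the same limit $(U_e,c_e)$, the whole sequence converges: $c_{e_n}\to c_e$ and $U_{e_n}\to U_e$ in $C^2_{loc}(\R\times\T^N)$, and in fact uniformly on $\R\times\T^N$ using the uniform tail decay again. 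As $e_n\to e$ was arbitrary, this proves the continuity of $e\mapsto c_e$ and $e\mapsto U_e$. The main obstacle, as indicated, is the two-sided control of the speeds — especially the strict positive lower bound — and the \emph{uniform-in-$n$} tail estimates needed to guarantee that the limit profile still connects $0$ and $1$ and satisfies the normalization; everything else is a fairly routine compactness-plus-uniqueness packaging.
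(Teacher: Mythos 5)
Your overall architecture (compactness of the profiles plus uniqueness up to shift, with the $L^2$ normalization pinning the shift) matches the paper's Step 3 at a high level, but the two points you yourself flag as "the genuine work" are exactly where the proposal has no valid argument, and they are the heart of the paper's proof. First, the lower bound $\inf_e c_e>0$: Lemma \ref{lemma2.2} gives only the \emph{sign} of $c_e$, and the identity $c_e\int_{\R\times\T^N}|\partial_\xi U_e|^2\,dyd\xi=\int_{\T^N\times[0,1]}f$ yields no quantitative bound unless you can bound $\int|\partial_\xi U_e|^2$ from above uniformly in $e$ — and such a bound is not available a priori: uniform estimates on $\partial_\xi U_e$ come from parabolic estimates on $u(t,x)=U_e(x\cdot e-c_et,x)$ only after dividing $\partial_t u$ by $c_e$, i.e.\ they already require the lower bound you are trying to prove. (Relatedly, your claim that the profile equation \eqref{Ue} is "uniformly parabolic in $(\xi,y)$" is false: its second-order part degenerates in the direction $(1,-e)$, so you cannot extract uniform $C^{2,\alpha}_{loc}$ bounds in $(\xi,y)$ directly from it.) Your fallback — "pass to the limit and obtain a stationary front contradicting (A1)" — begs the question: to run the integration-by-parts identity with $c=0$ you must know the limiting profile still connects $0$ and $1$ with decaying derivatives, and that is precisely what may fail when $c_{e_n}\to 0$ (the limit can lose one of the two limit states). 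The paper's Step 1 avoids this entirely: it normalizes $U_e(0,0)=1-\delta'$, passes to the limit of $u_n$ in the \emph{physical} variables, builds a subsolution from the front $U_{e_0}$ (whose speed $c_{e_0}$ is positive) to force $u_\infty\to 1$ locally uniformly in time, and then uses the pulsating relation $u_n(s,0)\le u_n(l\cdot e_n/c_{e_n},0)=u_n(0,-l)$ — exploitable exactly because $c_{e_n}\to 0$ — to conclude $u_\infty(s,0)\le u_\infty(0,-l)<1$, a contradiction. Nothing in your proposal supplies a substitute for this argument.

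Second, the preservation of the limit conditions. The exponential barriers of Lemma \ref{ASY} control $U_{e_n}$ only \emph{after} it has entered $[0,\sigma]$ (resp.\ $[1-\sigma,1]$); they say nothing, uniformly in $n$, about where and over how long a $\xi$-interval the transition between $1-\sigma$ and $\sigma$ takes place. The real danger is that the limit profile tends, as $\xi\to-\infty$, to an intermediate periodic steady state $p^+\not\equiv 1$ (such states can exist in the periodic bistable setting), and barriers depending only on $\gamma$, $\sigma$ and $\overline{c}$ cannot exclude this. The paper needs a genuine additional argument here (Step 3): after normalizing by $\sup_y U_{e_n}(\xi_n,y)=\sigma$ it gets $p^-\equiv 0$ easily, but excludes $p^+\not\equiv 1$ by a sliding comparison of $u_\infty$ with time shifts of the true front $U_{e_0}(x\cdot e_0-c_{e_0}t,x)$, following \cite[Lemma 4.2]{BH2} and the strong maximum principle; only then can Lemma \ref{lemma2.4} identify the limit up to a shift, and a further argument (using the normalization together with the $e$-independent decay rate $\mu_1$) shows the shifts $\xi_n$ stay bounded and the residual shift is $0$. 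Also note that the paper does not prove continuity of $c_e$ by identifying the limit as a front and invoking uniqueness of the speed: it proves $c\ge c_{e_0}$ and $c\le c_{e_0}$ directly by sub/supersolution comparisons with $U_{e_0}$, under the normalization $U_e(0,0)=1-\delta'$ and the Harnack inequality, precisely to bypass the problem of the limit losing its limit states. In short, the "routine compactness-plus-uniqueness packaging" is fine, but the two crucial inputs — the strict positive lower bound on the speeds and the proof that the limiting profile genuinely connects $0$ and $1$ — are missing, and they constitute most of the actual proof.
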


\begin{remark}
In Theorem \ref{th1}, the normalization of $U_e$ could be modified. In fact, we can normalize $U_e$ by the integral $\int_{\R^+\times\mathbb{T}^N} U_e^2(\xi,y)dyd\xi$ being any positive constant, or by $U_e(0,0)$ being any constant between $0$ and $1$ for all $e\in\mathbb{S}^{N-1}$.
\end{remark}

Normalize $U_e$ by $\int_{\R^+\times\mathbb{T}^N} U_e^2(\xi,y)dyd\xi=1$ for all $e\in\mathbb{S}^{N-1}$. For any $b\in\R^N\setminus\{0\}$, define
\be\label{Ub}
U_b=U_{\frac{b}{|b|}}\text{ and }c_b=c_{\frac{b}{|b|}}.
\ee
Then, $U_b$ and $c_b$ are well defined and continuous with respect to $b\in\R^N\setminus\{0\}$ by Theorem \ref{th1}.

\begin{theorem}\label{th2}
Normalize $U_e$ by $\int_{\R^+\times\mathbb{T}^N} U_e^2(\xi,y)dyd\xi=1$ and let $U_b$ and $c_b$ be defined in \eqref{Ub}. Then, $U_b$ and $c_b$ are doubly continuously Fr\'{e}chet differentiable at any $b\in\mathbb{R}^{N}\setminus\{0\}$.
\end{theorem}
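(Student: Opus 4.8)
The plan is to exhibit $(U_b,c_b)$ as a $C^2$ function of $b$ by the implicit function theorem in a scale of exponentially weighted H\"older spaces. Since $U_b=U_{b/|b|}$, $c_b=c_{b/|b|}$ and $b\mapsto b/|b|$ is $C^\infty$ on $\R^N\setminus\{0\}$, it suffices, with $e=b/|b|$, to treat \eqref{Ue} as an equation parametrized by $b$: put
\[
F(U,c,b):=c\,\partial_\xi U+\partial_{\xi\xi}U+\tfrac{2}{|b|}\,\nabla_y\partial_\xi U\cdot b+\Delta_y U+f(y,U)\quad\text{on }\R\times\T^N ,
\]
so that $F(U_b,c_b,b)=0$, together with $U(-\infty,\cdot)=1$, $U(+\infty,\cdot)=0$ and $\int_{\R^+\times\T^N}U^2=1$, characterizes $(U_b,c_b)$. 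Fixing a smooth $P=P(\xi)$ with $P\equiv1$ on $\{\xi\le-1\}$ and $P\equiv0$ on $\{\xi\ge1\}$ and writing the unknown as $V=U-P$, I would take $X$ to be the space of $V\in C^{2,\alpha}(\R\times\T^N)$ whose derivatives up to order two decay like $e^{-\delta|\xi|}$ as $\xi\to\pm\infty$ (with $\delta>0$ below the front decay rates, which are governed by $-f_u(y,0),-f_u(y,1)\ge\gamma$ as in the proof of Theorem \ref{th1}), and $Y$ the analogous space without the two top derivatives. Then $(V,c,b)\mapsto F(P+V,c,b)$ is $C^2$ from $X\times\R\times(\R^N\setminus\{0\})$ into $Y$: affine in $c$, real-analytic in $b$, and $C^2$ in $V$ precisely because $f$ is $C^2$ in $u$ with $f_{uu}$ Lipschitz, so $V\mapsto f(\cdot,P+V)$ is a $C^2$ Nemytskii operator on these spaces.

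The crux is the invertibility of the partial linearization in $(U,c)$ at a front $(U_e,c_e)$. With $\mathcal L_eV:=c_e\partial_\xi V+\partial_{\xi\xi}V+2e\cdot\nabla_y\partial_\xi V+\Delta_yV+f_u(y,U_e)V$ and $\mathcal N_eV:=2\int_{\R^+\times\T^N}U_eV$, this linearization is $(V,d)\mapsto(\mathcal L_eV+d\,\partial_\xi U_e,\ \mathcal N_eV)$ from $X\times\R$ to $Y\times\R$. I would invoke three facts, standard for bistable pulsating fronts and contained in or immediate from \cite{BH2}, \cite{D} and Lemmas \ref{lemma2.1}, \ref{lemma2.2}, \ref{lemma2.4}: (i) $\mathcal L_e$ is Fredholm of index $0$ from $X$ to $Y$; (ii) $\ker\mathcal L_e=\R\,\partial_\xi U_e$, by translation invariance and simplicity of the eigenvalue $0$; (iii) $\partial_\xi U_e\notin\mathrm{Range}(\mathcal L_e)$, i.e.\ the nondegeneracy behind uniqueness of $c_e$. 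By (i) and (iii), $Y=\mathrm{Range}(\mathcal L_e)\oplus\R\,\partial_\xi U_e$; together with (ii) and the transversality
\[
\mathcal N_e(\partial_\xi U_e)=2\!\int_{\T^N}\!\!\int_0^{+\infty}\! U_e\,\partial_\xi U_e\,d\xi\,dy=-\!\int_{\T^N}\! U_e(0,y)^2\,dy<0,
\]
a short diagram chase (for $(g,\beta)$, write $g=\mathcal L_eW+s\,\partial_\xi U_e$, take $d=s$ and $V=W+t\,\partial_\xi U_e$ with $t$ fixed by $\mathcal N_eV=\beta$; injectivity is the same computation reversed) shows $(V,d)\mapsto(\mathcal L_eV+d\,\partial_\xi U_e,\mathcal N_eV)$ is an isomorphism of $X\times\R$ onto $Y\times\R$.

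The implicit function theorem applied to $G(V,c,b):=\big(F(P+V,c,b),\ \int_{\R^+\times\T^N}(P+V)^2\,dy\,d\xi-1\big)$ then yields, near any $b_0\in\R^N\setminus\{0\}$, a $C^2$ map $b\mapsto(\widehat V_b,\widehat c_b)$ with $G(\widehat V_b,\widehat c_b,b)=0$ and $(\widehat V_{b_0},\widehat c_{b_0})=(U_{b_0}-P,c_{b_0})$, and this is the \emph{unique} zero of $G(\cdot,\cdot,b)$ in a neighbourhood of $(U_{b_0}-P,c_{b_0})$ in $X\times\R$. Since $(U_b-P,c_b)$ is also a zero and, by Theorem \ref{th1} (its continuity upgraded to the weighted norm via the uniform exponential decay estimates), $(U_b-P,c_b)\to(U_{b_0}-P,c_{b_0})$ in $X\times\R$ as $b\to b_0$, we get $(U_b-P,c_b)=(\widehat V_b,\widehat c_b)$ for $b$ near $b_0$. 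Hence $b\mapsto(U_b,c_b)$ is $C^2$ near every $b_0\in\R^N\setminus\{0\}$, which is the assertion (that $C^2$, and not more, is available matches the hypothesis that $f$ is only $C^2$ in $u$).

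The main obstacle is fact (i): the second-order part $\partial_{\xi\xi}+2e\cdot\nabla_y\partial_\xi+\Delta_y$ of $\mathcal L_e$ is only \emph{degenerate} elliptic on $\R\times\T^N$ — its symbol $(\zeta_0+e\cdot\zeta')^2+(|\zeta'|^2-(e\cdot\zeta')^2)$ vanishes whenever $\zeta'\parallel e$ and $\zeta_0=-e\cdot\zeta'$ — so Fredholmness and the kernel/cokernel description are not available directly from uniformly elliptic Schauder theory. I would remedy this by realizing $\mathcal L_e$ through the genuinely parabolic linearization, in the moving frame, of the equation solved by $u(t,x)=U_e(x\cdot e-c_et,x)$: the uniform parabolic estimates and exponential decay already used for Theorem \ref{th1} give the a priori bounds and compactness, while the exponential dichotomy at $\xi=\pm\infty$ (where $-f_u\ge\gamma$) together with the sliding and maximum-principle arguments behind \cite{BH2} and Lemmas \ref{lemma2.1}, \ref{lemma2.4} give the one-dimensional kernel and cokernel. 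An alternative that stays closest to the compactness proof of Theorem \ref{th1} and avoids abstract Fredholm theory is to run this scheme by hand: bound the difference quotients $(U_{b'}-U_b)/|b'-b|$ in $X$ using the linearized equation, the uniform estimates, and the nondegeneracy (controlling the $\partial_\xi U_b$-component via the normalization), pass to the limit to identify the Fr\'echet derivative as the unique solution of the formally differentiated problem, and iterate once more for the second derivative.
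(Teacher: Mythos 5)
Your overall strategy (set up the front equation plus the normalization as a zero problem, invert the linearization in $(U,c)$ using that the kernel is $\R\,\partial_\xi U_e$, that $\partial_\xi U_e$ is not in the range, and that the normalization functional is transversal to the kernel, then conclude by an implicit-function-type argument and the smoothness of $b\mapsto b/|b|$) is exactly the skeleton of the paper's proof, and your transversality computation $\mathcal N_e(\partial_\xi U_e)=-\int_{\T^N}U_e(0,y)^2\,dy<0$ and the ensuing algebra are correct. The genuine gap is the point you yourself flag as ``the main obstacle'' and then do not close: the Fredholmness of $\mathcal L_e$ (your fact (i)) and the non-solvability $\partial_\xi U_e\notin\mathrm{Range}(\mathcal L_e)$ (your fact (iii)) in your exponentially weighted $C^{2,\alpha}$ scale. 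The operator $\partial_{\xi\xi}+2e\cdot\nabla_y\partial_\xi+\Delta_y$ is degenerate on $\R\times\T^N$, so no Schauder-type a priori estimate, closed range, or index theory is available off the shelf, and your two proposed remedies do not supply it: ``realizing $\mathcal L_e$ through the parabolic linearization'' is only a heuristic (parabolic regularity in the moving frame does not by itself yield Fredholmness of the degenerate elliptic operator between your weighted spaces), and the ``by hand'' difference-quotient scheme needs precisely the missing ingredient, namely an a priori bound of the form $\|V\|\le C\big(\|\mathcal L_eV+d\,\partial_\xi U_e\|+|\mathcal N_eV|\big)$, i.e.\ the isomorphism property itself. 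So as written the central analytic step of the proof is asserted rather than proved.

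The paper circumvents this degeneracy differently, and this is the real content of Section 2.3: it never inverts the raw linearized operator. Following \cite{DHZ}, it introduces the auxiliary operator $M_e=c_e\partial_\xi+\partial_{\xi\xi}+2\nabla_y\partial_\xi\cdot e+\Delta_y-\beta$, whose invertibility from $D$ to $L^2(\R\times\T^N)$ (with bounds uniform in $e$) is quoted from \cite{DHZ} (Lemma \ref{M}), rewrites the perturbed front equation as the zero problem $G_e(v,\vartheta,\eta)=0$ for the map $v\mapsto v+M_e^{-1}(K_e(v,\vartheta,\eta))$ so that the relevant linearization $Q_e$ in \eqref{Q} is ``identity plus $M_e^{-1}$ of lower-order terms''; its invertibility (Lemma \ref{lemma2.11}) then rests on the closed range and algebraically simple zero eigenvalue of $H_e$, again imported from \cite{DHZ} (Lemma \ref{lemmaH}), all in $L^2$/$H^2$-based periodic spaces where these facts are actually proved. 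The differentiability itself is then obtained by the hands-on expansion you sketch as an alternative (first order in Step 1 with formula \eqref{U'}, second order in Step 2 after differentiating \eqref{eq2.21}), which works only because $Q_e^{-1}$ is in hand. To make your version complete you would either have to prove the Fredholm and nondegeneracy facts in your weighted H\"older scale (a nontrivial piece of analysis for this degenerate operator), or switch, as the paper does, to the $M_e^{-1}$-regularized formulation in the $L^2$-based spaces where the needed spectral information is available from \cite{DHZ}; a secondary point to check in your route is that the continuity of Theorem \ref{th1} really upgrades to your weighted $C^{2,\alpha}$ norm, which requires exponential decay estimates for $U_e$ and its derivatives with constants uniform in $e$.
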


Finally, we prove in this paper that the propagating rate of a transition front satisfies some estimates related to the speeds $c_e$ of pulsating fronts.

\begin{theorem}\label{th1.3}
Assume that (A1), (A2) hold and $c_e>0$ for any $e\in\mathbb{S}^{N-1}$. For any transition front $u(t,x)$ of \eqref{eq1.1}, it holds that
$$\inf_{e\in\mathbb{S}^{N-1}} c_e\le \liminf_{|t-s|\rightarrow +\infty} \frac{d(\Gamma_t,\Gamma_s)}{|t-s|}\le \limsup_{|t-s|\rightarrow +\infty} \frac{d(\Gamma_t,\Gamma_s)}{|t-s|}\le \sup_{e\in\mathbb{S}^{N-1}} c_e.$$
\end{theorem}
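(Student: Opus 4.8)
\noindent The plan is to establish the two inequalities separately, each by a comparison argument with pulsating fronts that exploits the geometric structure \eqref{eq1.3}--\eqref{eq1.6} of a transition front. Fix once and for all a small $\epsilon\in(0,1/4)$ (so in particular $\epsilon<\sigma$), let $M_\epsilon$ be as in Definition~\ref{TF}, and let $r=r_{M_\epsilon}$ be the radius provided by \eqref{eq1.5}. Since $d(\Gamma_t,\Gamma_s)$ is symmetric in $(s,t)$ we may assume $t>s$, and it suffices to prove that $d(\Gamma_t,\Gamma_s)\ge(\min_ec_e-\delta)(t-s)-C$ and $d(\Gamma_t,\Gamma_s)\le(\max_ec_e+\delta)(t-s)+o(t-s)$ for every $\delta>0$ and all large $t-s$; letting $t-s\to+\infty$ and then $\delta\to0$ yields the theorem. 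Note that, by Theorem~\ref{th1}, $e\mapsto c_e$ is continuous on the compact sphere $\mathbb{S}^{N-1}$, so $\inf_ec_e=\min_ec_e>0$ and $\sup_ec_e=\max_ec_e=:\overline c$ are attained; this will matter for the upper bound.

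\textbf{Lower bound.} Let $p_s\in\Gamma_s$. By \eqref{eq1.5} there is $q^-\in\Omega_s^-$ with $|q^--p_s|\le r$ and $B(q^-,M_\epsilon)\subset\Omega_s^-$, hence $u(s,\cdot)\ge1-\epsilon$ on $B(q^-,M_\epsilon)$; shrinking $\epsilon$ we may assume the ball $B(q^-,M_\epsilon)$ is large enough to trigger bistable invasion. By the known spreading properties of spatially periodic bistable equations (see~\cite{D}) — placing a compactly supported subsolution below $u(s,\cdot)$ and comparing the resulting expanding solution, in each direction $e$, with the pulsating front $U_e$, whose level sets travel at speed $c_e$ — one gets $u(t,\cdot)\ge1-\epsilon'(\epsilon)$ on $B\big(q^-,(\min_ec_e-\delta)(t-s)\big)$ for all large $t-s$, where $\epsilon'(\epsilon)\to0$ as $\epsilon\to0$. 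If a point of $\Gamma_t$ lay in $B\big(q^-,(\min_ec_e-\delta)(t-s)-r\big)$, then by \eqref{eq1.5} there would be a point of $\Omega_t^+$ still inside $B\big(q^-,(\min_ec_e-\delta)(t-s)\big)$ at which both $u(t,\cdot)\le\epsilon$ and $u(t,\cdot)\ge1-\epsilon'$, impossible for small $\epsilon$. Hence $d(p_s,\Gamma_t)\ge(\min_ec_e-\delta)(t-s)-2r$, and since $p_s\in\Gamma_s$ was arbitrary, $d(\Gamma_t,\Gamma_s)\ge(\min_ec_e-\delta)(t-s)-2r$.

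\textbf{Upper bound.} Fix any $p_t\in\Gamma_t$. Applying \eqref{eq1.5} at time $t$ gives $y^\pm\in\Omega_t^\pm$ with $|y^\pm-p_t|\le r$, $u(t,y^-)\ge1-\epsilon$, $u(t,y^+)\le\epsilon$; by the intermediate value theorem on $[y^-,y^+]$ there is $p^*$ with $|p^*-p_t|\le2r$ and $u(t,p^*)=1/2$. Put $\rho=d(p^*,\Gamma_s)$; since $\R^N=\Omega_s^-\sqcup\Gamma_s\sqcup\Omega_s^+$ with $\Omega_s^\pm$ open, the connected ball $B(p^*,\rho)$ lies wholly in $\Omega_s^-$ or in $\Omega_s^+$. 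If $B(p^*,\rho)\subset\Omega_s^-$ then $u(s,\cdot)\ge1-\epsilon$ on $B(p^*,\rho-M_\epsilon)$, and since bistable invasion does not recede, $u(t,p^*)\ge1-\epsilon'>1/2$ once $\rho$ exceeds a fixed constant $C_1$ — contradicting $u(t,p^*)=1/2$; so $\rho\le C_1$. If instead $B(p^*,\rho)\subset\Omega_s^+$ then $u(s,\cdot)\le\epsilon$ on $B(p^*,\rho-M_\epsilon)$, and the decisive ingredient is the following \emph{contracting-bubble estimate}: there are constants $C_2,C_3$ such that if $u(s,\cdot)\le\epsilon$ on $B(z,R_0)$ then $u(\cdot,z)\le\epsilon'$ on $[s,s+T]$ whenever $\overline c\,T+C_2\log R_0+C_3\le R_0$. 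Granting it with $z=p^*$, $R_0=\rho-M_\epsilon$, the equality $u(t,p^*)=1/2$ forces $\rho\le M_\epsilon+\overline c\,(t-s)+C_2\log(t-s)+C_3'$. In either case $d(\Gamma_t,\Gamma_s)\le d(p_t,\Gamma_s)\le|p_t-p^*|+\rho\le\overline c\,(t-s)+o(t-s)$, which is the claimed upper bound.

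\textbf{The contracting-bubble estimate — the main obstacle.} Everything above is routine bookkeeping with the definition of a transition front; the real work is the contracting-bubble estimate, that is, the construction of a super-solution $\overline w\ge u$ confining $u$ near $z$ to values $\le\epsilon'$ until roughly time $R_0/\overline c$. In the homogeneous case one takes $\overline w(t,x)=\phi\big(R(t)-|x-z|\big)$ with $\phi$ the planar bistable front and $R$ solving $R'=-\overline c-\tfrac{N-1}{R}$; then $\overline w$ is a super-solution, one arranges $\overline w(s,\cdot)\ge u(s,\cdot)$ by placing the transition layer of $\overline w$ inside $B(z,R_0)$ (where $u(s,\cdot)\le\epsilon$) and letting $\overline w\equiv1$ outside, and integrating the ODE yields $R(t)\ge R_0-\overline c\,(t-s)-O(\log(t-s))$. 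In the spatially periodic setting $\phi$ cannot simply be replaced by a minimum of directional pulsating fronts: the $0$-region one must trap is the complement of a ball, hence non-convex, whereas a minimum of fronts only traps convex sets. One is therefore forced to build a genuinely quasi-radial super-solution adapted to the periodic medium, and to show that its interface, near any point whose outward normal is $-e$, contracts at speed at most $c_{-e}+o(1)\le\overline c+o(1)$, the $o(1)$ absorbing the mean-curvature term and the periodic oscillation of the front position. Carrying this out uniformly in $e$ — using the continuity of $e\mapsto c_e$ from Theorem~\ref{th1} so that $\overline c=\max_ec_e$ is genuinely an upper bound on all these local contraction speeds — is the crux of the proof; the spreading results of Ducrot~\cite{D} for periodic bistable equations are the natural input.
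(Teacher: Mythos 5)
Your reduction of the theorem to two propagation estimates --- (i) a solution that is $\ge 1-\epsilon$ on a fixed large ball at time $s$ invades at speed at least $\underline{c}-\varepsilon$ in every direction, and (ii) a region of radius $R$ where $u\le\epsilon$, surrounded by values near $1$, can only shrink at speed at most $\overline{c}+\varepsilon$ --- combined with the bookkeeping via \eqref{eq1.5} and \eqref{eq1.7}, is essentially the architecture of the paper's Section 3.2 (your direct estimates replace the paper's argument by contradiction along sequences $t_k,s_k$, and your $\Omega^{\pm}$ labels are swapped relative to \eqref{eq1.7}, which is harmless). The genuine gap is that neither propagation estimate is proved. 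For the lower bound you appeal to ``known spreading properties of spatially periodic bistable equations (see [D])''; Ducrot's paper concerns a prey--predator system and contains no such spreading result for \eqref{eq1.1}, and in this periodic bistable setting the statement you need is precisely the paper's Lemma \ref{lemma3.1}, whose proof is a substantial construction and is not quotable from the literature. For the upper bound you yourself identify the ``contracting-bubble estimate'' as the crux and leave it as an acknowledged obstacle, with only the homogeneous-case supersolution sketched; that estimate is exactly the paper's Lemma \ref{lemma3.2}. So the analytic core of the theorem is missing on both sides.

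Moreover, the sketch you give of how such estimates would be obtained under-specifies the required ingredients. The paper's sub- and supersolutions are quasi-radial objects of the form $U_{\pm x/|x|}(\zeta(t,x),x)$ glued to constants by a cutoff $h_\varepsilon$; differentiating them in $x$ makes the direction $\pm x/|x|$ vary, and this produces terms involving $U'_e$, $\partial_\xi U'_e$, $\partial_{y_i}U'_e$ and $U''_e$, the Fr\'echet derivatives of the profile with respect to the direction. Absorbing these terms (uniformly in $e$, on $|x|\ge C_\varepsilon$), together with the curvature term $(N-1)/|x|$, is exactly why the paper establishes the twice continuous Fr\'echet differentiability of $(U_b,c_b)$ (Theorem \ref{th2}) and the uniform bounds on $\|U'_e\|$, $\|\partial_\xi U'_e\|$, $\|\partial_{y_i}U'_e\|$, $\|U''_e\|$ before entering Section 3; the continuity of $e\mapsto c_e$, which is all your outline invokes, is not sufficient for this. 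Your geometric reduction is sound, but to complete the proof you would have to carry out the direction-dependent radial sub/supersolution constructions of Lemmas \ref{lemma3.1} and \ref{lemma3.2} (or an equivalent), using the differentiability results, rather than citing them as known or granting them.
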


\begin{remark}\label{remark1.5}
By the continuity of $c_e$ from Theorem \ref{th1}, the $\inf$ and $\sup$ are actually $\min$ and $\max$. Moreover, since $c_e>0$ for any $e\in\mathbb{S}^{N-1}$, one has that $\inf_{e\in\mathbb{S}^{N-1}} c_e>0$ and~$\sup_{e\in\mathbb{S}^{N-1}} c_e<+\infty$.
\end{remark}

We point out that if (A1), (A2) do not hold, there may exist stationary pulsating fronts. In this situation, we will lose the continuity and differentiability of pulsating fronts in general. On the other hand, since $\inf_{e\in\mathbb{S}^{N-1}} c_e=0$ when there exist stationary fronts, the first inequality in Theorem \ref{th1.3} holds obviously. But we can not obtain the last inequality in Theorem \ref{th1.3} by our method since our proof is based on the continuity and differentiability of pulsating fronts.

We organize our paper as follows. In the next section, we investigate some properties of pulsating fronts. Especially we prove that the pulsating fronts $U_e$ and the speeds $c_e$ are continuous and Fr\'{e}chet differentiable with respect to the direction $e\in\mathbb{S}^{N-1}$, that is, we prove Theorem \ref{th1} and Theorem \ref{th2}. Section 3 is devoted to the proof of Theorem \ref{th1.3} by showing two key-lemmas in Section 3.1 and completing the proof in Section 3.2.


\section{Properties}
\noindent
In this section, we deduce some properties of pulsating fronts $U_e(x\cdot e-c_e t,x)$, which are well-known for planar fronts in homogeneous case. Especially, we prove the continuity and differentiability of $c_e$ and $U_e(\xi,y)$ with respect to the direction $e$, which obviously hold for homogeneous planar fronts since they are independent of the propagating direction.

\subsection{General properties}
\noindent
Since the properties in this section are proved for pulsating fronts in every direction $e$, we fix an arbitrary $e\in\mathbb{S}^{N-1}$ in this section. First, we prove that the pulsating fronts are approaching their limiting states $0$ and $1$ exponentially.

\begin{lemma}\label{ASY}
For any pulsating front $U_e(x\cdot e-c_e t,x)$ with $c_e\ge 0$, there exist $A_1$, $A_2\in \R$, $\mu_1>0$, $\mu_2>0$ ($\mu_1$, $\mu_2$ are independent of $e$), $C_1>0$, $C_2>0$ such that
\begin{align}
0<U_e(x\cdot e-c_e t,x)\le C_1 e^{-\mu_1(x\cdot e -c_e t)}\ \ \text{if}\ \ x\cdot e-c_e t\ge A_1,\label{eq:asymptotic1}\\
0<1-U_e(x\cdot e-c_e t,x)\le C_2 e^{\mu_2(x\cdot e-c_e t)}\ \ \text{if}\ \ x\cdot e-c_e t\le A_2.\label{eq:asymptotic2}
\end{align}
\end{lemma}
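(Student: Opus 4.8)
The plan is to treat the two limits $\xi = x\cdot e - c_e t \to +\infty$ and $\xi \to -\infty$ by a standard sub/supersolution argument exploiting the uniform stability of the states $0$ and $1$ from \eqref{F2}, together with the structural assumption that $f(x,u)=f_u(x,0)u$ for $u$ slightly negative and $f(x,u)=f_u(x,1)(u-1)$ for $u$ slightly above $1$. Concretely, I would first use the uniform convergence in (ii) of Definition \ref{PF} to pick $A_1$ so large that $U_e(\xi,y)\le \sigma$ for all $\xi\ge A_1$ and all $y\in\T^N$; on this region, \eqref{F2} gives $f(y,U_e)\le -\gamma\,U_e$ pointwise, and the function $u(t,x)=U_e(x\cdot e-c_et,x)$ solves \eqref{eq1.1}, so it is a nonnegative subsolution of the linear equation $v_t = \Delta v - \gamma v$ on the moving half-space $\{x\cdot e - c_e t \ge A_1\}$.

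The key step is to construct an exponential supersolution of the form $\bar v(t,x) = C_1\, e^{-\mu_1(x\cdot e - c_e t)}$ and verify it dominates $u$ on the parabolic boundary $\{x\cdot e - c_e t = A_1\}$ (where $u\le \sigma \le C_1 e^{-\mu_1 A_1}$ for $C_1$ chosen large, using that $\|U_e\|_\infty \le 1$). Plugging $\bar v$ into the operator $\partial_t - \Delta + \gamma$ yields the characteristic condition $c_e\mu_1 - \mu_1^2 + \gamma \ge 0$, i.e. $\mu_1^2 - c_e\mu_1 - \gamma \le 0$; since $c_e\ge 0$, one may always take $\mu_1 = \mu_1(\gamma) \in (0,\sqrt{\gamma}\,]$ small enough (independently of $e$, as the roots of $\mu^2 - c_e\mu - \gamma$ bracket $0$ and the positive root is $\ge \sqrt\gamma$ for every $c_e\ge 0$), which is exactly why the statement claims $\mu_1$ independent of $e$. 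A parabolic comparison principle on the moving half-space then gives \eqref{eq:asymptotic1}; the analogous argument applied to $1-u$ near the state $1$, using $-f_u\ge\gamma$ near $1$, produces \eqref{eq:asymptotic2} with $\mu_2 = \mu_2(\gamma)$.

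The main obstacle I anticipate is making the comparison rigorous on an \emph{unbounded moving} domain: the comparison principle on $\{x\cdot e - c_e t \ge A_1\}$ requires control of $u - \bar v$ at spatial infinity (to rule out a positive supremum escaping to $|x|\to\infty$). This is handled because $0 < u < 1$ is globally bounded while $\bar v \to 0$ only in the direction $e$; one fixes this by working in the moving frame coordinate $\xi$ and the torus variable $y$, where $U_e$ solves the degenerate elliptic equation \eqref{Ue} on $[A_1,+\infty)\times\T^N$ — a domain that is bounded in $y$ — and applying the maximum principle there to $w(\xi,y) = U_e(\xi,y) - C_1 e^{-\mu_1\xi}$, noting $w\le 0$ on $\{\xi = A_1\}$, $w\to 0$ as $\xi\to+\infty$ uniformly in $y$, and $\mathcal{L}w \ge 0$ for the relevant operator so that a positive interior maximum is excluded. (If $c_e=0$ one works directly with the stationary equation, and the same characteristic inequality $\mu_1^2 \le \gamma$ applies.) A minor technical point is the regularity needed to apply the strong maximum principle up to the boundary $\{\xi=A_1\}$, which follows from the standing parabolic regularity of $U_e$ assumed throughout.
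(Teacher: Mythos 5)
Your proposal is correct and follows essentially the same route as the paper: after choosing $A_1$ so that $U_e\le\sigma$ on $\{\xi\ge A_1\}$, both arguments use \eqref{F2} to linearize ($f(y,U_e)\le-\gamma U_e$), compare with the exponential supersolution $\sigma e^{-\mu_1(\xi-A_1)}$ with $\mu_1=\sqrt{\gamma}$ (so that $c_e\mu_1-\mu_1^2+\gamma=c_e\mu_1\ge0$, which is exactly why $\mu_1$ is independent of $e$), and treat $c_e=0$ and $c_e>0$ separately. The only difference is how the comparison on the unbounded half-space is closed: the paper runs a sliding argument (the infimum $\varepsilon^*$) in the parabolic variables, using periodicity of $U_e$ in $y$ to extract a touching point, while you locate a would-be positive maximum of $U_e-C_1e^{-\mu_1\xi}$ on $[A_1,+\infty)\times\T^N$ directly and kill it with the $-\gamma$ zeroth-order term — a harmless variant, since only this sign (not a strong maximum principle, nor nondegeneracy of \eqref{Ue}) is needed at the maximum point.
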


\begin{proof}
It is known by the strong maximum principle that $0<U_e(x\cdot e-c_e t,x)<1$ for all $(t,x)\in\R\times\R^N$. We only prove \eqref{eq:asymptotic1}, the proof being similar for \eqref{eq:asymptotic2}. We deal with it into two cases: $c_e=0$ and $c_e> 0$ (although assumption (A1) implies $c_e\neq 0$, we still deal with $c_e=0$ for completeness).

{\it Case 1: $c_e=0$.} In this case, the pulsating front $U_e(x\cdot e-c_e t,x)$ is a stationary front, that is, $U_e(x\cdot e-c_e t,x)=U_e(x\cdot e,x):=U(x)$. From Definition \ref{PF} of pulsating front, it satisfies
\begin{equation}\label{2.3}
-\Delta U-f(x,U)=0\,\text{ for $x\in\R^N$},
\end{equation}
and $\lim_{x\cdot e\rightarrow +\infty} U(x)=0$, $\lim_{x\cdot e\rightarrow -\infty} U(x)=1$. It means that there exists $A_1\in \R$ such that
\begin{equation}\label{2.4}
0<U(x)\le \sigma\, \text{ for all }\, x\cdot e\ge A_1.
\end{equation}
where $\sigma$ is defined in \eqref{F2}. From \eqref{F1}, \eqref{F2}, \eqref{2.3} and \eqref{2.4}, it follows that
\begin{equation}\label{2.5}
-\Delta U+\gamma U\le 0\,\text{ for all $x\cdot e\ge A_1$},
\end{equation}
where $\gamma>0$ is also given in \eqref{F2}.

Define $\omega(x)=\sigma e^{-\mu_1 (x\cdot e-A_1)}$ where $\mu_1$ is a positive constant to be chosen. The function $\omega$ satisfies
\begin{equation*}
-\Delta \omega+\gamma\omega=(-\mu_1^2+\gamma)\sigma e^{-\mu_1 (x\cdot e-A_1)}\,\text{ for $x\in\R^N$}.
\end{equation*}
Take $\mu_1=\sqrt{\gamma}$ so that $-\mu_1^2+\gamma=0$ which also means $-\Delta \omega+\gamma \omega=0$ for $x\in\R^N$. Since $U(x)\rightarrow 0$ as $x\cdot e\rightarrow +\infty$ and $\omega(x)\ge U(x)$ for all $x\cdot e=A_1$ from \eqref{2.4}, it follows from \eqref{2.5} and the elliptic weak maximum principle, that
$$U(x)\le \sigma e^{-\mu_1 (x\cdot e-A_1)}\, \text{ for }x\cdot e\ge A_1.$$

{\it Case 2: $c_e> 0$.} In this case, we consider the pulsating front $v(t,x):=U_e(x\cdot e-c_e t,x)$ which satisfies \eqref{eq1.1} with limiting conditions $\lim_{x\cdot e-c_e t\rightarrow \pm\infty} v(t,x)=0,\ 1$. It means that there exists $A_1\in\R$ such that
\begin{equation}\label{2.8}
0<v(t,x)\le \sigma\,\text{ for all $x\cdot e-c_e t\ge A_1$}.
\end{equation}
From \eqref{F2} and \eqref{2.8}, it follows that
\begin{equation}\label{eq:v}
v_t-\Delta v+\gamma v\le 0\,\text{ for all $x\cdot e-c_e t\ge A_1$}.
\end{equation}

Define $\omega(t,x)=\sigma e^{-\mu_1 (x\cdot e -c_e t-A_1)}$ for $\mu_1=\sqrt{\gamma}>0$ such that $\mu_1 c_e -\mu_1^2+\gamma=\mu_1 c_e\ge 0$. Then $\omega(t,x)$ satisfies
\begin{equation}\label{eq:omega}
\omega_t-\Delta \omega+\gamma \omega\ge 0\,\text{ for all $(t,x)\in\R\times\R^N$}.
\end{equation}
On the other hand,
$$\delta\ge U_e(A_1,x)\,\text{ for all $x\in \T^N$},$$
that is, $\omega(t,x)\ge v(t,x)$ for all $x\cdot e-c_e t=A_1$. Let
$$\varepsilon^*=\inf\{\varepsilon> 0;\ v(t,x)-\varepsilon\le \omega(t,x) \,\text{ for all }x\cdot e-c_e t\ge A_1\}$$
which is well-defined from \eqref{2.8} and $\omega(t,x)>0$. We only need to show $\varepsilon^*=0$.

Assume by contradiction that $\varepsilon^*>0$. There exist then a sequence $(\varepsilon_{n})_{n\in \N}$ of positive real numbers and a sequence of points $(t_n,x_n)_{n\in \N}$ satisfying $x_n\cdot e-c_e t_n\ge A_1$ such that
\begin{equation}\label{v}
\varepsilon_n\rightarrow \varepsilon^*\,\text{ as $n\rightarrow +\infty$ and }\, v(t_n,x_n)-\varepsilon_n> \omega(t_n,x_n)\,\text{ for all $n\in\N$}.
\end{equation}

We claim that $x_n\cdot e-c_e t_n -A_1\ge 0$ are upper-bounded uniformly in $n\in\N$. Otherwise, $v(t_n,x_n)\rightarrow 0$ and $\omega(t_n,x_n)\rightarrow 0$ which means $-\varepsilon^*\ge 0$ from \eqref{v} and then contradicts $\varepsilon^*>0$. Therefore, $\xi_n:=x_n\cdot e-c_e t_n$ are bounded and $v(t_n,x_n)=U(\xi_n,x_n)$, $\omega(t_n,x_n)=e^{-\mu_1 \xi_n}$. Since $U(\xi,y)$ is periodic in $y$, there is then $(\xi^*,x^*)\in\R\times\R^N$ or say, $(t^*,x^*)\in\R\times\R^N$ such that $x^*\cdot e-c_e t^* > A_1$ and  $v(t^*,x^*)-\varepsilon^*= \omega(t^*,x^*)$. Define $z=\omega-v$. From \eqref{eq:v} and \eqref{eq:omega}, it follows that $z_t-\Delta z+\gamma z\ge 0$ for all $x\cdot e-c_e t\ge A_1$. But $z$ reaches a minimum at the point $(t^*,x^*)$ with $x^*\cdot e-c_e t^*>A_1$ and $z(t^*,x^*)=-\varepsilon^*<0$. Thus, $-\gamma\varepsilon^*\ge 0$, which is a contradiction. Therefore, $\varepsilon^*=0$, that is, \eqref{eq:asymptotic1} holds. This completes the proof.
\end{proof}
\vskip 0.3cm

Although the following lemma is elementary, we state it for completeness.

\begin{lemma}\label{lemma2.2}
For any pulsating front $U_e(x\cdot e-c_e t,x)$ with $c_e\neq 0$, the speed $c_e$ has the sign of $\int_{\T^N\times[0,1]} f(x,u)dxdu$.
\end{lemma}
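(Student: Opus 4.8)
\textbf{Proof proposal for Lemma \ref{lemma2.2}.}

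The plan is to multiply the pulsating front equation by a suitable test function and integrate over one period in the moving frame, exploiting the periodicity of $U_e$ in the $y$-variable and its exponential decay to the limiting states (from Lemma \ref{ASY}) to control the boundary terms. Concretely, I would work with the profile equation \eqref{Ue}, namely $c_e\partial_\xi U_e+\partial_{\xi\xi}U_e+2\nabla_y\partial_\xi U_e\cdot e+\Delta_y U_e+f(y,U_e)=0$ on $\R\times\T^N$. Multiply this equation by $\partial_\xi U_e$ and integrate over $(\xi,y)\in\R\times\T^N$. The key algebraic observations are that $\partial_{\xi\xi}U_e\,\partial_\xi U_e=\tfrac12\partial_\xi\big((\partial_\xi U_e)^2\big)$ integrates to zero in $\xi$ (using that $\partial_\xi U_e\to 0$ as $\xi\to\pm\infty$, which follows from Lemma \ref{ASY} together with parabolic estimates), that $\int_{\T^N}\Delta_y U_e\,\partial_\xi U_e\,dy=-\int_{\T^N}\nabla_y U_e\cdot\nabla_y\partial_\xi U_e\,dy=-\tfrac12\partial_\xi\int_{\T^N}|\nabla_y U_e|^2dy$, which again integrates to zero in $\xi$ by the same decay, and that $\int_{\T^N}\nabla_y\partial_\xi U_e\cdot e\,\partial_\xi U_e\,dy=\tfrac12\int_{\T^N}\nabla_y\big((\partial_\xi U_e)^2\big)\cdot e\,dy=0$ by periodicity in $y$. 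Finally $\int f(y,U_e)\partial_\xi U_e\,d\xi dy=\int_{\T^N}\big(\int_\R \tfrac{d}{d\xi}F(y,U_e)\,d\xi\big)dy$ where $F(y,s)=\int_0^s f(y,\tau)d\tau$, and since $U_e(-\infty,y)=1$, $U_e(+\infty,y)=0$ uniformly in $y$, this equals $\int_{\T^N}\big(F(y,0)-F(y,1)\big)dy=-\int_{\T^N\times[0,1]}f(y,u)\,dy\,du$.

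Collecting these, integration of the product equation over $\R\times\T^N$ yields
\[
c_e\int_{\R\times\T^N}(\partial_\xi U_e)^2\,d\xi\,dy=\int_{\T^N\times[0,1]}f(y,u)\,dy\,du.
\]
Since $U_e$ is nonconstant (it connects $0$ and $1$), the integral $\int_{\R\times\T^N}(\partial_\xi U_e)^2\,d\xi\,dy$ is strictly positive and finite (finiteness again from the exponential bounds of Lemma \ref{ASY} and interior parabolic regularity, which give exponential decay of $\partial_\xi U_e$ as well). Hence $c_e$ has the same sign as $\int_{\T^N\times[0,1]}f(y,u)\,dy\,du$, which is exactly the claim.

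The main obstacle I anticipate is the rigorous justification of the integration by parts and the vanishing of the boundary terms at $\xi=\pm\infty$: one must upgrade the $L^\infty$ exponential decay of $U_e$ and $1-U_e$ from Lemma \ref{ASY} to exponential decay of the derivatives $\partial_\xi U_e$, $\nabla_y U_e$, $\nabla_y\partial_\xi U_e$, so that all the total-derivative terms genuinely integrate to zero and the remaining integrals converge absolutely. This is where the regularity hypotheses on $f$ and standard interior parabolic (Schauder) estimates applied on unit cylinders, combined with the exponential smallness of $U_e$ (resp.\ $1-U_e$) near $+\infty$ (resp.\ $-\infty$), do the work; away from the limits there is nothing to prove since everything is bounded and periodic in $y$. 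A minor additional point is to confirm $\int_{\R\times\T^N}(\partial_\xi U_e)^2\,d\xi\,dy\neq 0$, which is immediate because otherwise $U_e$ would be independent of $\xi$, contradicting the limits in Definition \ref{PF}(ii).
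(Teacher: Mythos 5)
Your proposal is correct and follows essentially the same route as the paper: the paper likewise uses Lemma \ref{ASY} together with standard parabolic estimates to get exponential decay of $\partial_\xi U_e$, $\nabla_y U_e$ and the second derivatives, then integrates \eqref{Ue} against $\partial_\xi U_e$ over $\R\times\T^N$ to obtain $c_e\int_{\R\times\T^N}|\partial_\xi U_e|^2\,dy\,d\xi=\int_{\T^N\times[0,1]}f(y,u)\,dy\,du$ and reads off the sign. Your extra care about the vanishing boundary terms and the strict positivity of $\int(\partial_\xi U_e)^2$ is exactly the justification the paper leaves implicit.
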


\begin{proof}
Notice that $u(t,x)=U_e(x\cdot e-c_e t,x)$ is a classical solution of \eqref{eq1.1} and $v=u_t$ is a classical solution of $v_t=\Delta v+f_u(x,u)v$. Then, by Lemma \ref{ASY} and standard parabolic estimates, all functions $\partial_{\xi} U_e$, $\partial_{y_i} U_e$, $\partial_{\xi\xi} U_e$, $\partial_{y_i\xi} U_e$, and $\partial_{y_iy_j} U_e$ for $i$, $j=1,\cdots,N$, converge to $0$ exponentially as $\xi\rightarrow\pm \infty$. Integrating \eqref{Ue} in $\R\times\T^N$ by parts against $\partial_{\xi} U_e$, one has that
$$c_e\int_{\R\times\T^N} |\partial_{\xi} U_e|^2 dyd\xi=\int_{\T^N\times[0,1]} f(y,u)dydu.$$
Thus, $c_e$ has the sign of $\int_{\T^N\times[0,1]} f(x,u)dxdu$.
\end{proof}
\vskip 0.3cm

In the next lemma, we show that every pulsating front with nonzero speed is strictly monotone in time.

\begin{lemma}\label{lemma2.1}
Any pulsating front $U_e(x\cdot e-c_e t,x)$ with $c_e\neq 0$ is monotone in $t$.
\end{lemma}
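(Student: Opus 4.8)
The plan is a sliding argument in time. Since $c_e \neq 0$, I first assume $c_e > 0$; the case $c_e < 0$ follows by the change of unknowns $u \mapsto 1-u$, $f(x,u) \mapsto -f(x,1-u)$, $U_e(\xi,y) \mapsto 1-U_e(-\xi,y)$, which replaces the front by one of speed $-c_e > 0$ and preserves monotonicity in $t$. With $c_e > 0$, write $u(t,x) = U_e(x\cdot e - c_e t, x)$; by Definition~\ref{PF}(ii), $u(t,x) \to 1$ as $t \to +\infty$ and $u(t,x) \to 0$ as $t \to -\infty$, locally uniformly in $x$, so proving that $t\mapsto u(t,x)$ is nondecreasing amounts to proving
$$u(t+\tau,x) \ge u(t,x) \qquad \text{for all } \tau > 0 \text{ and all } (t,x) \in \R\times\R^N,$$
that is, that $U_e$ is nonincreasing in $\xi$. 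For $\tau \ge 0$ the shift $u^\tau := u(\cdot+\tau,\cdot)$ solves~\eqref{eq1.1} too, and $z^\tau := u^\tau - u$ solves the linear parabolic equation $z^\tau_t = \Delta z^\tau + c^\tau(t,x)\,z^\tau$ with bounded coefficient $c^\tau = \int_0^1 f_u(x, u + s z^\tau)\,ds$; by~\eqref{F2}, $c^\tau \le -\gamma < 0$ at every point where $u$ and $u^\tau$ both lie in $[0,\sigma]$ or both in $[1-\sigma,1]$.

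The key remark is that $z^\tau$ cannot attain a negative (global, hence spacetime-interior) minimum at a point where $c^\tau \le -\gamma$: at such a minimum $z^\tau_t = 0$ and $\Delta z^\tau \ge 0$, so $0 = \Delta z^\tau + c^\tau z^\tau \ge c^\tau z^\tau > 0$, a contradiction. Using the uniform limits, fix $A > 0$ with $U_e \le \sigma$ on $\{\xi \ge A\}$ and $U_e \ge 1-\sigma$ on $\{\xi \le -A\}$; since $U_e < 1$ on the compact set $\{|\xi| \le A\}\times\T^N$, also $U_e \le 1-\rho_0$ on $\{\xi \ge -A\}$ for some $\rho_0 \in (0,\sigma)$, and pick $B > 0$ with $U_e \ge 1-\rho_0$ on $\{\xi \le -B\}$. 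Then, for $\tau$ large, $z^\tau \ge 0$ everywhere. Indeed, otherwise its infimum would be negative and attained (as $z^\tau \to 0$ when $\xi \to \pm\infty$); by the key remark applied near $\xi = +\infty$ and near $\xi = -\infty$ it would be attained at some $(\xi_0,y_0)$ with $-A < \xi_0 < A + c_e\tau$, and then (since $\xi_0 \le c_e\tau - B$ would give $U_e(\xi_0 - c_e\tau, y_0) \ge 1-\rho_0 \ge U_e(\xi_0, y_0)$, against $z^\tau(\xi_0,y_0) < 0$) in fact $\xi_0 \in (c_e\tau - B,\, A + c_e\tau)$; but there $u^\tau(\xi_0,y_0) = U_e(\xi_0 - c_e\tau, y_0) \ge \min_{[-B,A]\times\T^N} U_e > 0$, whereas $u(\xi_0,y_0) = U_e(\xi_0,y_0)$ is as small as we wish once $\tau$ is large ($U_e(+\infty,\cdot) = 0$ uniformly) — again against $z^\tau(\xi_0,y_0) < 0$. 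Hence $u^\tau \ge u$ for all large $\tau$, so
$$\tau^* := \inf\{\tau_0 \ge 0 :\ u^\tau \ge u \text{ on } \R\times\R^N \text{ for every } \tau \ge \tau_0\} < +\infty,$$
and, by continuity, $u^\tau \ge u$ for every $\tau \ge \tau^*$.

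It remains to prove $\tau^* = 0$. Suppose $\tau^* > 0$. Since $\tau^*$ is the sharp threshold, for each $n$ there is $\hat\tau_n \in [\tau^* - 1/n,\, \tau^*)$ with $u^{\hat\tau_n} \not\ge u$; then $z^{\hat\tau_n}$ attains a negative minimum $-\varepsilon_n$, and by the key remark the minimizer $(\xi_n,y_n)$ satisfies $-A < \xi_n < A + c_e\tau^*$, a range independent of $n$. As $\partial_\xi U_e$ is bounded (interior parabolic estimates), $z^{\hat\tau_n} \to z^{\tau^*}$ uniformly on $\R\times\T^N$, so $\varepsilon_n = -\inf z^{\hat\tau_n} \to -\inf z^{\tau^*} = 0$, the last equality because $z^{\tau^*} \ge 0$ and $z^{\tau^*} \to 0$ as $\xi \to \pm\infty$. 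Passing to a subsequence, $(\xi_n,y_n) \to (\xi_\infty, y_\infty)$ with $\xi_\infty$ finite and $\hat\tau_n \to \tau^*$, whence $z^{\tau^*}(\xi_\infty, y_\infty) = \lim (-\varepsilon_n) = 0$. Thus the nonnegative solution $z^{\tau^*}$ of the linear parabolic equation vanishes at an interior point; the strong maximum principle and uniqueness for the Cauchy problem then give $z^{\tau^*} \equiv 0$, i.e.\ $U_e(\cdot,y)$ is $c_e\tau^*$-periodic in $\xi$ — impossible, since $U_e(+\infty,\cdot) = 0 \ne 1 = U_e(-\infty,\cdot)$. Hence $\tau^* = 0$, $u$ is nondecreasing in $t$, and the lemma follows (for $c_e < 0$ one gets nonincreasing).

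The genuinely delicate point is running the comparison all the way down to the stable states $0$ and $1$, where compactness is unavailable: this is handled by the uniform stability~\eqref{F2} through the "no negative interior minimum where $c^\tau \le -\gamma$" remark, which confines any possible violation of $u^\tau \ge u$ to a bounded strip in $\xi$; the finiteness of $\tau^*$ and the strong-maximum-principle contradiction are then routine.
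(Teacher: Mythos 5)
Your proof is correct, but it takes a genuinely different route from the paper's. The paper disposes of this lemma essentially by citation: it observes that a pulsating front with $c_e>0$ is a transition front which is an invasion of $0$ by $1$ in the sense of \cite[Definition 1.4]{BH2}, invokes \cite[Theorem 1.11]{BH2} to get monotonicity in $t$, and then records strictness ($\partial_{\xi}U_e<0$) from the strong maximum principle applied to $u_t$. You instead give a self-contained sliding-in-time argument: the uniform stability \eqref{F2} forbids a negative interior minimum of $z^{\tau}=u(\cdot+\tau,\cdot)-u$ wherever $u$ and $u^{\tau}$ both lie in $[0,\sigma]$ or both in $[1-\sigma,1]$, which confines any violation of $u^{\tau}\ge u$ to a bounded strip in $\xi$ (where the infimum is attained, since $z^{\tau}$ depends only on $\xi=x\cdot e-c_e t$ and on $x$ modulo $\Z^N$ and tends to $0$ as $\xi\to\pm\infty$); this yields $u^{\tau}\ge u$ for all large $\tau$, and sliding down to the critical shift $\tau^*$ together with the strong maximum principle forces $\tau^*=0$. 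This is essentially the mechanism hidden inside the cited Berestycki--Hamel result, so your version buys self-containedness (no appeal to the general theory of invasions) at the cost of length, while the paper's buys brevity and, with its closing remark, the strict inequality $\partial_{\xi}U_e<0$ that is used later in Section 2.2 and Section 3. Note that your argument as written only yields nondecreasing monotonicity, which suffices for the statement; strictness would follow by one further application of the strong maximum principle to $u_t\ge 0$, exactly as the paper does.
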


\begin{proof}
By Definition \ref{TF} of transition fronts, one can notice that, any pulsating front $U_e(x\cdot e-c_e t,x)$ is a transition front with $(\Gamma_t)_{t\in\mathbb{R}}:=(c_e t e)_{t\in\mathbb{R}}$, $(\Omega^+_t)_{t\in\mathbb{R}}:=(\{x|x\cdot e<c_e t\})_{t\in\mathbb{R}}$,  $(\Omega^-_t)_{t\in\mathbb{R}}:=(\{x|x\cdot e>c_e t\})_{t\in\mathbb{R}}$. Moreover, from \eqref{F1}, \eqref{F2} and the regularity of $f$, there exists a positive constant $\hat{\sigma}$ such that the function $f(x,s)$ is nonincreasing in $[0,\hat{\sigma}]$ and in $[1-\hat{\sigma},1]$. Therefore, from \cite[Definition 1.4]{BH2}, $U_e(x\cdot e-c_e t,x)$ is an invasion of $0$ by $1$ when $c_e>0$. Then, by \cite[Theorem 1.11]{BH2} with its followed discussion, it implies that $U_e(x\cdot e-c_e t,x)$ is increasing in $t$. Similarly when $c_e<0$, the pulsating front is an invasion of $1$ by $0$, and whence it is decreasing in $t$. From the strong maximum principle applied to $u_t$, this also implies that $\partial_{\xi} U_e(\xi,y)<0$ for all $(\xi,y)\in\R\times\R^N$ which completes the proof.
\end{proof}
\vskip 0.3cm

\begin{lemma}\label{lemma2.4}
For every direction $e\in\mathbb{S}^{N-1}$, the speed of pulsating fronts for \eqref{eq1.1} with non-zero speed is unique in the sense that if $U_e(x\cdot e-c_e t,x)$ and $\tilde{U}_e(x\cdot e-\tilde{c}_e t,x)$ are two pulsating fronts with $c_e\neq 0$, $\tilde{c}_e\neq 0$, then $c_e=\tilde{c}_e$. Furthermore, the pulsating front is unique up to shifts in t, that is, there is $\tau\in\R$ such that $\tilde{U}_e(x\cdot e-\tilde{c}_e t,x)=U_e(x\cdot e-c_e t+\tau,x)$.
\end{lemma}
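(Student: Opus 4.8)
The plan is to combine the monotonicity and transition-front structure from Lemmas~\ref{lemma2.1}--\ref{lemma2.2} with a comparison/sliding argument based on the exponential decay of Lemma~\ref{ASY} and the uniform stability~\eqref{F2} of the states $0$ and $1$. First, some reductions: by Lemma~\ref{lemma2.2} the speeds $c_e$ and $\tilde c_e$ have the same sign, so replacing $(u,f,U_e)$ by $(1-u,\,-f(\cdot,1-\cdot),\,1-U_e(-\cdot,\cdot))$ if needed I may assume $c_e,\tilde c_e>0$; then by Lemma~\ref{lemma2.1} both $u_1:=U_e(x\cdot e-c_et,x)$ and $u_2:=\tilde U_e(x\cdot e-\tilde c_et,x)$ are increasing in $t$, with $\partial_\xi U_e<0$ and $\partial_\xi\tilde U_e<0$; and from the uniform limits of the profiles, $\{(\xi,y):\sigma\le U_e(\xi,y)\le 1-\sigma\}\subset[-R,R]\times\T^N$ for some $R>0$, with $\partial_\xi U_e\le-\beta_0<0$ there (and likewise for $\tilde U_e$). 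The central tool I would establish is a spatially periodic Fife--McLeod stability lemma: there are $\delta_0\in(0,\sigma)$ and $\omega,K>0$ such that if a solution $v$ with $0<v<1$ is trapped at some time $t_0$ between $U_e(x\cdot e-c_et_0-\zeta_-,x)-\delta$ and $U_e(x\cdot e-c_et_0-\zeta_+,x)+\delta$ (with $\zeta_-\le\zeta_+$, $\delta\le\delta_0$), then for all $t\ge t_0$ it stays trapped between $U_e(x\cdot e-c_et-\zeta_-+K\delta(1-e^{-\omega(t-t_0)}),x)-\delta e^{-\omega(t-t_0)}$ and $U_e(x\cdot e-c_et-\zeta_+-K\delta(1-e^{-\omega(t-t_0)}),x)+\delta e^{-\omega(t-t_0)}$. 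This is proved by checking that these two expressions are respectively a sub- and a super-solution of~\eqref{eq1.1}, using~\eqref{Ue}, the inequality $-f_u\ge\gamma$ near $0$ and $1$, the global bound on $f_u$, and $\partial_\xi U_e\le-\beta_0$ on the transition strip to absorb the error term, and then applying the parabolic comparison principle (cf.~\cite{FM} for the homogeneous prototype).

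For the equality of the speeds, suppose $c_e\ne\tilde c_e$, say $c_e<\tilde c_e$, and fix $\delta=\delta_0/2$. Using the uniform limits of the profiles together with Lemma~\ref{ASY}, one checks that for every $t_0\in\R$ the hypothesis of the stability lemma holds at time $t_0$ for $v=u_2$ with shifts $\zeta_\pm(t_0)=(\tilde c_e-c_e)t_0\pm C_0$, where $C_0$ is independent of $t_0$. Applying the lemma and evaluating the upper bound at a point $x$ with $x\cdot e=\tilde c_et$ gives, for $t\ge t_0$,
\[
0<\min_{y\in\T^N}\tilde U_e(0,y)\ \le\ u_2(t,x)\ \le\ U_e\big((\tilde c_e-c_e)(t-t_0)-C_0-K\delta(1-e^{-\omega(t-t_0)}),\,x\big)+\delta e^{-\omega(t-t_0)}.
\]
Since $\tilde c_e-c_e>0$ and $U_e(\eta,\cdot)\to0$ uniformly as $\eta\to+\infty$, letting $t\to+\infty$ makes the right-hand side tend to $0$, a contradiction. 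Hence $c_e\ge\tilde c_e$, and by symmetry $c_e=\tilde c_e=:c$.

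For uniqueness up to a shift, with $c_e=\tilde c_e=c$ the stability lemma applied at $t_0=0$ and the limit $t\to+\infty$ (using periodicity of the profiles in $y$ to pass to the limit along a suitable sequence) yield $\zeta_1\le\zeta_2$ with $U_e(\xi-\zeta_1,y)\le\tilde U_e(\xi,y)\le U_e(\xi-\zeta_2,y)$ for all $(\xi,y)\in\R\times\T^N$. I would then run the sliding method: set $\zeta^*=\inf\{\zeta:\ U_e(\xi-\zeta,y)\ge\tilde U_e(\xi,y)\ \text{for all }(\xi,y)\}$, which is finite by the previous line and Lemma~\ref{ASY}; by continuity $U_e(\cdot-\zeta^*,\cdot)\ge\tilde U_e$. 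Since $U_e(x\cdot e-ct-\zeta^*,x)$ and $\tilde U_e(x\cdot e-ct,x)$ are both solutions of~\eqref{eq1.1} with the first $\ge$ the second, their difference solves a linear parabolic equation and the strong maximum principle gives either $U_e(\cdot-\zeta^*,\cdot)\equiv\tilde U_e$ --- which is the claim, i.e.\ $\tilde U_e(x\cdot e-\tilde c_et,x)=U_e(x\cdot e-c_et-\zeta^*,x)$, so $\tau=-\zeta^*$ --- or strict inequality everywhere. In the latter case I contradict the minimality of $\zeta^*$: on the compact transition strip the gap $U_e(\xi-\zeta^*,y)-\tilde U_e(\xi,y)$ has a positive lower bound, so a small decrease of the shift preserves the inequality there; on each of the two tails, where both functions lie within $\sigma$ of a stable state, a maximum-principle argument based on $-f_u\ge\gamma$ (and the fact that the gap restricted to the boundary hyperplane of the tail is a positive continuous function on $\T^N$, hence bounded below) shows the inequality is likewise preserved after a small decrease of the shift. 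This produces a strictly smaller admissible shift, contradicting the definition of $\zeta^*$; hence $U_e(\cdot-\zeta^*,\cdot)\equiv\tilde U_e$.

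The main difficulty throughout is the behaviour in the two tails: both the construction of the initial sandwiches and the final sliding step require controlling $U_e$ and $\tilde U_e$ simultaneously near $0$ and near $1$, where the comparison principle is only conditionally favourable; this is exactly where Lemma~\ref{ASY} and~\eqref{F2} enter, and keeping the bookkeeping of the shifts correct there is the crux, everything else being a routine application of the comparison principle. Alternatively, once monotonicity (Lemma~\ref{lemma2.1}) and the exponential decay (Lemma~\ref{ASY}) are available, one may instead deduce both assertions from the uniqueness results for invasions of~\cite{BH2}.
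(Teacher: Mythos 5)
Your proposal is correct in outline, but it takes a genuinely different route from the paper. The paper's proof is essentially two lines: after Lemma~\ref{lemma2.2} guarantees that $c_e$ and $\tilde c_e$ have the same sign (so both fronts are invasions in the same sense, monotone in $t$ by Lemma~\ref{lemma2.1}), it simply invokes the general uniqueness theorems for bistable transition fronts, namely \cite[Theorems 1.12 and 1.14]{BH2}, which give at once equality of the speeds and uniqueness of the front up to a shift in time --- exactly the alternative you mention in your closing sentence. What you propose instead is a self-contained Fife--McLeod argument transplanted to the pulsating setting: a trapping/stability lemma built from the sub- and supersolutions $U_e\big(x\cdot e-c_et-\zeta\mp K\delta(1-e^{-\omega(t-t_0)}),x\big)\pm\delta e^{-\omega(t-t_0)}$, whose verification uses \eqref{Ue}, the uniform stability \eqref{F2} near the limit states and $\partial_\xi U_e\le-\beta_0<0$ on the compact transition strip; then equality of speeds by evaluating the resulting upper bound along $x\cdot e=\tilde c_e t$, and uniqueness by sliding plus the strong maximum principle. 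This does work: constant $\xi$-shifts of $U_e(x\cdot e-c_et,x)$ are again solutions, periodicity in $y$ supplies the compactness needed both to bound $-\partial_\xi U_e$ from below on the strip and to pass to the limit $t\to+\infty$ along lattice translations, and the only genuinely delicate steps --- the comparison arguments in the tails on domains unbounded in time, both inside the trapping lemma and in the final sliding --- are of exactly the same nature as arguments the paper itself runs elsewhere (Case 2 of Lemma~\ref{ASY}, and Step 3 of the proof of Theorem~\ref{th1}, which follows \cite[Lemma 4.2]{BH2}). The trade-off is clear: the paper's citation is shorter but leans on the general machinery of \cite{BH2}, whereas your route is longer and requires the careful shift bookkeeping in the tails that you correctly identify as the crux, but it is elementary, stays entirely within the periodic framework, and reproves the uniqueness ingredients explicitly.
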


\begin{proof}
Under the assumptions of Lemma \ref{lemma2.4}, Lemma \ref{lemma2.2} implies that $c_e$ and $\tilde{c}_e$ have that same sign. If follows then from \cite[Thoerem 1.12 and 1.14]{BH2} that $c_e=\tilde{c}_e$ and that the fronts are unique up to shifts in time.
\end{proof}

\subsection{Continuity}
\noindent
This section is devoted to proving the continuity of $(U_e,c_e)$ with respect to the direction $e$.

Following the proof of \cite[Theorem 1.4]{DHZ1}, we can get a uniform bound of the speeds of pulsating fronts for any direction.

\begin{lemma}\label{lemma2.3}
There is a positive constant $C$ depending only on the function $f$ such that
$$\sup_{e\in\mathbb{S}^{N-1}} |c_e|\le C.$$
\end{lemma}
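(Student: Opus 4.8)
The plan is to obtain a uniform upper bound on $|c_e|$ by a comparison argument against one-dimensional bistable traveling fronts, using the fact that the periodic nonlinearity $f$ can be bounded above and below by $x$-independent bistable nonlinearities. First, from \eqref{F1}, \eqref{F2} and the assumed regularity and periodicity of $f$, the family $\{f(x,\cdot)\}_{x\in\T^N}$ is uniformly bounded in $C^2$; moreover the intermediate zeros $\theta_x$ satisfy $\sigma<\theta_x<1-\sigma$. One can therefore construct two $x$-independent bistable functions $\underline f,\overline f:\R\to\R$ (say of the pure cubic type, with appropriate intermediate zeros $\overline\theta<\sigma$ and $\underline\theta>1-\sigma$, and with large enough amplitude) such that $\underline f(u)\le f(x,u)\le \overline f(u)$ for all $(x,u)\in\R^N\times[0,1]$ and $\underline f,\overline f$ extended to satisfy the global conditions imposed on $f$ near $\pm\infty$. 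Let $(\underline\phi,\underline c)$ and $(\overline\phi,\overline c)$ be the corresponding one-dimensional bistable traveling fronts $\phi''+c\phi'+g(\phi)=0$, $\phi(-\infty)=1$, $\phi(+\infty)=0$, which exist and are unique up to shift with a definite sign of the speed (Aronson–Weinberger, Fife–McLeod, as recalled in the introduction).

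Next I would fix an arbitrary direction $e\in\mathbb S^{N-1}$ and view $\overline\phi(x\cdot e-\overline c\,t)$ as a planar supersolution of \eqref{eq1.1}: indeed $\partial_t\overline\phi-\Delta\overline\phi-f(x,\overline\phi)=\overline f(\overline\phi)-f(x,\overline\phi)\ge 0$. Similarly $\underline\phi(x\cdot e-\underline c\,t)$ is a planar subsolution. By Lemma \ref{ASY} the pulsating front $v(t,x)=U_e(x\cdot e-c_e t,x)$ approaches $0$ and $1$ exponentially in $\xi=x\cdot e-c_e t$ uniformly in $y\in\T^N$, while $\overline\phi,\underline\phi$ have the same exponential behaviour in their own argument; hence for a suitable time shift the sub/supersolutions can be placed above/below $v$ at some initial time. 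The comparison principle for \eqref{eq1.1} then forces, for all later times, $\underline\phi(x\cdot e-\underline c\,t-a)\le v(t,x)\le\overline\phi(x\cdot e-\overline c\,t+b)$ for constants $a,b$ (depending a priori on $e$). Tracking a level set — say the set where $v=1/2$, which by Lemma \ref{lemma2.1} moves with the hyperplane $x\cdot e=c_e t+O(1)$ — and comparing with the level sets of the planar fronts, which move at speeds $\underline c$ and $\overline c$, yields $\underline c\le c_e\le\overline c$ (in the sign-normalized case $c_e>0$; the general bound $|c_e|\le\max(|\underline c|,|\overline c|)$ follows symmetrically). Since $\underline c,\overline c$ depend only on $\underline f,\overline f$, hence only on $f$, the constant $C:=\max(|\underline c|,|\overline c|)$ is independent of $e$, which is the claim.

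The main obstacle is making the initial comparison uniform in $e$: one must check that the shifts $a,b$ needed to sandwich $v(t,\cdot)$ between the translated planar fronts at one time slice can be chosen independently of $e$ — otherwise the argument still gives a bound for each $e$ but not obviously a uniform one. This is where the $e$-independence of the exponential decay rates $\mu_1,\mu_2$ in Lemma \ref{ASY}, together with a uniform (in $e$) lower bound on the ``width'' of the transition region of $U_e$, is essential; such a uniform width estimate can itself be extracted from the uniform interior parabolic estimates for $v$ and the uniform bounds on $f$. An alternative, and perhaps cleaner, route — as the statement's reference to \cite[Theorem 1.4]{DHZ1} suggests — is to bypass the level-set bookkeeping entirely: bound the displacement of $\Gamma_t$ over a unit time interval directly by comparison with the planar sub/supersolutions (which only requires ordering at the endpoints of the interval), giving a uniform Lipschitz-in-time bound on the interface position, hence on $c_e=\lim d(\Gamma_t,\Gamma_s)/|t-s|$.
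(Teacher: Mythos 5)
Your overall strategy (sandwich $f$ between $x$-independent bistable nonlinearities $\underline f\le f\le\overline f$ and compare the pulsating front with the corresponding planar fronts) is sound in outline, but the step where you start the comparison is a genuine gap. You claim that, by Lemma \ref{ASY}, ``for a suitable time shift the sub/supersolutions can be placed above/below $v$ at some initial time''. To launch the comparison principle you need a pointwise ordering such as $U_e(\xi,y)\le\overline\phi(\xi+b)$ for \emph{all} $(\xi,y)$, and near $\xi\to-\infty$ this amounts to $1-\overline\phi(\xi+b)\le 1-U_e(\xi,y)$. Lemma \ref{ASY} only provides an \emph{upper} bound on $1-U_e$ (there is no lower bound on how slowly $U_e$ approaches $1$), so it points in the wrong direction; what is really needed is that $1-\overline\phi$ decays at $-\infty$ at least as fast as the rate $\mu_2$ of Lemma \ref{ASY}. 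But the decay rate of $1-\overline\phi$ is the positive root $\lambda_+=\bigl(-\overline c+\sqrt{\overline c^{\,2}+4|\overline f'(1)|}\,\bigr)/2$, which becomes small when $\overline c$ is large, and forcing $\overline f\ge f$ on $[0,1]$ typically makes $\overline c$ large; a symmetric obstruction occurs at $\xi\to+\infty$, where $\overline\phi$ may decay faster than the rate $\mu_1$ controlling $U_e$. Hence no translate of $\overline\phi$ need lie above $U_e$, and the ordering at an initial time cannot be obtained as stated. By contrast, the obstacle you single out (uniformity in $e$ of the shifts $a,b$) is not the real issue: the conclusion $\underline c\le c_e\le\overline c$ is obtained by letting $t\to+\infty$ along the level sets and is insensitive to bounded, $e$-dependent shifts.

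The standard repair is a Fife--McLeod type construction: replace the plain translate by $\min\bigl\{\overline\phi(x\cdot e-\overline c\,t-Z(t))+q e^{-\omega t},\,1\bigr\}$ with $q\in(0,\sigma/2]$, $\omega>0$ small and a drifting shift $Z$ with $Z'$ proportional to $e^{-\omega t}$; then only an ``up to $q$'' ordering at the initial time is needed (which does follow from the uniform limits of $U_e$), and the supersolution property is checked using \eqref{F2} near the states $0$ and $1$ and the strict monotonicity of $\overline\phi$ in between --- exactly the kind of argument the paper uses in Step 1 of the proof of Theorem \ref{th1}. The paper's own proof of Lemma \ref{lemma2.3}, following \cite{DHZ1}, is simpler still and avoids auxiliary homogeneous fronts altogether: it compares the pulsating front directly with the explicit supersolution $\min\bigl(e^{-(x\cdot e-Ct)}+\frac{\sigma}{2}e^{-\gamma t},1\bigr)$ and subsolution $\max\bigl(1-e^{(x\cdot e+Ct)}-\frac{\sigma}{2}e^{-\gamma t},0\bigr)$, whose ordering with the front at $t=0$ is immediate and whose super/subsolution property holds once $C$ is large depending only on $f$, via \eqref{F2} and the global Lipschitz bound on $f$. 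Your approach can be made rigorous, but only after adding the perturbation machinery above; as written, the initial-ordering step fails.
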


\begin{remark}
The strategy for the proof of Lemma \ref{lemma2.3} as in \cite{DHZ1}, is to construct supersolutions and subsolutions of \eqref{eq1.1} as
$$\overline{u}(t,x)=\min\left(e^{-(x\cdot e-Ct)}+\frac{\sigma}{2}e^{-\gamma t}, 1\right),\text{ for $t\ge 0$ and $x\in\R^N$},$$
and
$$\underline{u}(t,x)=\max\left(1-e^{(x\cdot e+Ct)}-\frac{\sigma}{2}e^{-\gamma t}, 0\right),\text{ for $t\ge 0$ and $x\in\R^N$},$$
where $\sigma$ and $\gamma$ are given in \eqref{F2} and $C>0$ is a sufficiently large constant independent of the direction $e$.
\end{remark}

We now prove the continuity of $(U_{e},c_{e})$, that is, Theorem \ref{th1}.
\vskip 0.3cm

\begin{proof}[Proof of Theorem \ref{th1}]
{\it Step 1: proof of $\inf_{e\in\mathbb{S}^{N-1}} c_e>0$.} We first show that $\inf_{e\in\mathbb{S}^{N-1}} c_e>0$. Assume by contradiction that there is a sequence $(e_n)_{n\in\mathbb{N}}\subset \mathbb{S}^{N-1}$ such that $c_{e_n}\rightarrow 0$ as $n\rightarrow +\infty$. We assume that there is $e_0\in\mathbb{S}^{N-1}$ such that $e_n\rightarrow e_0$ as $n\rightarrow +\infty$, even if it means to extract a subsequence. For every direction $e\in\mathbb{S}^{N-1}$, we normalize $U_e$ by
\be\label{2.10}
U_e(0,0)=1-\delta',
\ee
where $\delta'>0$ will be defined later. Let $u_n(t,x)=U_{e_n}(x\cdot e_n-c_{e_n}t,x)$. Since $\partial_{\xi} U_e$ is negative for all $e\in\mathbb{S}^{N-1}$ and $U_{e_n}(\xi,y)$ is periodic in $y$, it follows that
\be\label{eq:2.11}
u_n(1,x)\ge 1-\delta', \text{ for $x\in\mathbb{Z}^N$ such that $x\cdot e_n-c_{e_n}\le 0$}.
\ee
By standard parabolic estimates, $u_n$ converges locally uniformly, up to a subsequence, to a solution $u_{\infty}$ of \eqref{eq1.1}. By $(u_n)_t>0$, one has that $(u_{\infty})_t\ge 0$. Furthermore, by \eqref{eq:2.11}, $e_n\rightarrow e_0$ and $c_{e_n}\rightarrow 0$ as $n\rightarrow +\infty$, it follows that
\be\label{eq:2.12}
u_{\infty}(1,x)\ge 1-\delta', \text{ for $x\in\mathbb{Z}^N$ such that $x\cdot e_0\le 0$}.
\ee
Let $\delta'>0$ be chosen less than $1$ and whence $u_{\infty}(1,x)\ge 1-\delta'>0$ for $x\in\mathbb{Z}^N$ such that $x\cdot e_0\le 0$ and $u_{\infty}(0,0)=1-\delta'<1$. By the strong maximum principle, it follows that $0<u_{\infty}(t,x)<1$ for all $(t,x)\in\R\times\R^N$.

Let $\delta>0$ be such that
$$\delta<\min(\gamma,\sigma),$$
where $\gamma$ and $\sigma$ are defined in \eqref{F2}. Since $\lim_{\xi\rightarrow -\infty} U_{e_0}(\xi,y)=1$ and $\lim_{\xi\rightarrow +\infty} U_{e_0}(\xi,y)=0$, there is $C>0$ such that
\be\label{2.11}
U_{e_0}(\xi,y)\ge 1-\delta, \text{ for $\xi\le -C$ and } U_{e_0}(\xi,y)\le \delta, \text{ for $\xi\ge C$}.
\ee
Since $\partial_{\xi} U_{e_0}(\xi,y)$ is negative and continuous in $\R\times\mathbb{T}^N$, there is $k>0$ such that $-\partial_{\xi} U_{e_0}\ge k$ for all $(\xi,y)\in [-C,C]\times\mathbb{T}^N$. Let $\omega>0$ such that
$$\omega k\ge L+\delta,$$
where $L=\max_{(u,x)\in [0,1]\times\mathbb{T}^N} |f_u(u,x)|$. From \eqref{eq:2.12}, the Harnack inequality and $1$ is a solution of \eqref{eq1.1}, one can choose $\delta'$ small enough such that
\be\label{eq2.11}
u_{\infty}(0,x)\ge 1-\delta, \text{ for $x\in\R^N$ such that $x\cdot e_0 \le 0$}.
\ee

Then, for any $(t,x)\in\R\times\R^N$, we set
\be\label{eq:2.15}
\underline{u}(t,x)=\max\left(U_{e_0}(x\cdot e_0 -c_{e_0} t -\omega e^{-\delta t}+\omega + C,x)-\delta e^{-\delta t},0\right).
\ee
Let us check that $\underline{u}$ is a subsolution for the problem satisfied by $u_{\infty}(t,x)$, for $t\ge 0$ and $x\in\R^N$. First, at the time $0$, it follows from \eqref{eq2.11} that
$$u_{\infty}(0,x)\ge 1-\delta\ge \underline{u}(0,x),\text{ for all $x\in\R^N$ such that $x\cdot e_0\le 0$}.$$
On the other hand, from \eqref{2.11} and the fact that $u_{\infty}\ge 0$, it follows that for all $x\in\R^N$ such that $x\cdot e_0\ge 0$,
$$\underline{u}(0,x)=\max\left(U_{e_0}(x\cdot e_0 + C,x)-\delta ,0\right)\le\max(0,0)=0\le u_{\infty}(0,x).$$
Thus,
$$u_{\infty}(0,x)\ge \underline{u}(0,x), \text{ for all $x\in\R^N$}.$$
Inspired by \cite{FM} and \cite{H}, it is easy to check that
$$L \underline{u}=\underline{u}_t -\Delta \underline{u}-f(\underline{u})\le 0$$
for all $t\ge 0$ and $x\in\R^N$ such that $\underline{u}(t,x)>0$. By the comparison principle, one gets that
$$u_{\infty}(t,x)\ge \underline{u}(t,x), \text{ for $t\ge 0$ and $x\in\R^N$}.$$
Since $c_{e_0}>0$ and $\lim_{\xi\rightarrow -\infty} U_{e_0}(\xi,y)=1$, one infers that $u_{\infty}(t,x)$ converges locally uniformly to~$1$ as $t\rightarrow +\infty$.

Fix $l\in\Z^N$ such that $l\cdot e_0>0$. Since $e_n\rightarrow e_0$ and $c_{e_n}\rightarrow 0$ as $n\rightarrow +\infty$, one has that $l\cdot e_n>0$ for $n$ large enough, and $l\cdot e_n / c_{e_n}\to +\infty$ as $n\to +\infty$. Then, for any $s\in\R$, it follows from the definition of pulsating fronts and $(u_n)_t>0$ that
$$u_n(s,0)\le u_n(\frac{l\cdot e_n}{c_{e_n}},0)=u_n(0,-l),$$
for $n$ large enough. Passing to the limit as $n\rightarrow +\infty$, it follows that
$$u_{\infty}(s,0)\le u_{\infty}(0,-l)<1,$$
for all $s\ge 0$. This contradicts the locally uniform convergence of $u_{\infty}(t,x)$ to $1$ as $t\rightarrow +\infty$. Thus, we get that $\inf_{e\in\mathbb{S}^{N-1}} c_e>0$.

{\it Step 2: continuity of $c_e$.} Take any $e_0\in\mathbb{S}^{N-1}$ and any sequence $(e_n)_{n\in\mathbb{N}}\subset \mathbb{S}^{N-1}$ such that $e_n\rightarrow e_0$ as $n\rightarrow +\infty$. Then, by Lemma \ref{lemma2.3} and Step 1, there is $c> 0$ and a subsequence $c_{e_{n_k}}$ such that $c_{e_{n_k}}\rightarrow c$ as $n_k\rightarrow +\infty$. For all direction $e\in\mathbb{S}^{N-1}$, we still take the normalization \eqref{2.10}. By standard parabolic estimates applied to $u(t,x)=U_e(x\cdot e-c_e t,x)$ for all $e\in\mathbb{S}^{N-1}$, one gets that $U_{e}$ and its derivatives are uniformly bounded in $\R\times\mathbb{T}^{N}$ and uniformly for $e\in\mathbb{S}^{N-1}$. Then, the sequence $U_{e_{n_k}}$ converges locally uniformly along with its derivatives up to the second order, up to a subsequence, to a function $U_{\infty}$ and $U_{\infty}$ satisfies
$$c \partial_{\xi} U_{\infty} +\partial_{\xi\xi} U_{\infty} +2\nabla_y \partial_{\xi} U_{\infty}\cdot e_0+\Delta_y U_{\infty} +f(y,U_{\infty})=0,\,\text{ for all $(\xi,y)\in\R\times\T^N$},
$$
and $U_{\infty}(0,0)=1-\delta'$.
That also implies that if let $v_n(t,x)=U_{e_{n_k}}(x\cdot e_{n_k}-c_{e_{n_k}} t,x)$, one has that $v_n(t,x)\rightarrow v_{\infty}(t,x)=U_{\infty}(x\cdot e_0- c t,x)$ locally uniformly in $\R\times\R^N$ and $v_{\infty}(t,x)$ satisfies~\eqref{eq1.1}. Moreover, since $U_e(\xi,y)$ is periodic in $y$ and $\partial_{\xi} U_e(\xi,y)<0$ for all $e\in\mathbb{S}^{N-1}$, one has that $U_{\infty}(\xi,y)$ is periodic in $y$ and $\partial_{\xi} U_{\infty}(\xi,y)\le 0$.

We borrow the parameters $\delta$, $\omega$, $k$ from Step 1. By the normalization \eqref{2.10} and $U_{\infty}(\xi,y)$ is periodic in $y$ and nonincreasing in $\xi$, one gets that $v_{\infty}(t+1,x)=U_{\infty}(x\cdot e_0-c(t+1),x)\ge 1-\delta'$ for all $t\in\R$ and $x\in\mathbb{Z}^N$ such that $x\cdot e_0-c(t+1)\le 0$. From the Harnack inequality and $1$ is a solution of \eqref{eq1.1}, one can choose $\delta'$ small enough such that
$$v_{\infty}(t,x)=U_{\infty}(x\cdot e_0-ct,x)\ge 1-\delta, \text{ for all $x\cdot e_0-c t\le 0$}.$$
Then, one can prove as in Step~1 that $\underline{u}(t,x)$ defined in \eqref{eq:2.15} is a subsolution of the problem satisfied by $v_{\infty}(t,x)$, for $t\ge 0$ and $x\in\R^N$.

By the comparison principle, one gets that
$$v_{\infty}(t,x)=U_{\infty}(x\cdot e_0 -ct,x)\ge \underline{u}(t,x), \text{ for $t\ge 0$ and $x\in\R^N$}.$$
This implies that $c\ge c_{e_0}$. In fact, if $c<c_{e_0}$, one has that for any $(t,x)\in (0,+\infty)\times\R^N$ such that $x\cdot e_0=ct$, $x\cdot e_0 -c_{e_0} t -\omega e^{-\delta t}+\omega + C=-(c_{e_0}-c) t -\omega e^{-\delta t}+\omega + C\rightarrow -\infty$ as $t\rightarrow +\infty$. Since $\lim_{\xi\rightarrow -\infty} U_{e_0}(\xi,y)=1$ and $\lim_{t\rightarrow +\infty} e^{-\delta t}=0$, there exists $T>0$ large enough such that for any $x\in\R^N$ such that $x\cdot e_0=cT$,
\begin{align}
v_{\infty}(T,x)\ge\underline{u}(T,x)=&\max\left(U_{e_0}(x\cdot e_0 -c_{e_0} T -\omega e^{-\delta T}+\omega + C,x)-\delta e^{-\delta T},0\right)
\ge 1-\frac{\delta'}{2}.\label{2.13}
\end{align}
However, for any $x\in\mathbb{Z}^{N}$ such that $x\cdot e_0=cT$, it follows that $v_{\infty}(T,x)=U_{\infty}(0,x)=U_{\infty}(0,0)=1-\delta'$ since $U_{\infty}(\xi,y)$ is periodic in $y$ which is a contradiction with \eqref{2.13}.

Now we prove $c\le c_{e_0}$. Take $z_{n_k}$ such that $U_{e_{n_k}}(z_{n_k},0)=\delta'$. Then, from the analysis of the head of this step, one has that $v'_{n_k}(t,x)=U_{e_{n_k}}(x\cdot e_{n_k}-c_{n_k} t + z_{n_k},x)$ converge locally uniformly, up to a subsequence, to a solution $v'_{\infty}(t,x)=U'_{\infty}(x\cdot e_0-c t,x)$ of \eqref{eq1.1} where $U'_{\infty}(0,0)=\delta'$, $\partial_{\xi} U'_{\infty}\le 0$ and $U'_{\infty}(\xi,y)$ is periodic in $y$. Then, one can construct supersolutions for the problem satisfied by $v'_{\infty}(t,x)$ as
$$\overline{u}(t,x)=\min\left(U_{e_0}(x\cdot e_0 -c_{e_0} t +\omega e^{-\delta t}-\omega - C,x)+\delta e^{-\delta t},1\right),$$
for $t\ge 0$ and $x\in\R^N$. Similar to the arguments as above, one infers that $c\le c_{e_0}$.

Then, one can conclude that $c=c_{e_0}$. By the uniqueness of $c_{e_0}$ in the direction $e_0$ and $e_0$ is arbitrary taken, it implies that $c_e$ is continuous with respect to $e\in\mathbb{S}^{N-1}$.

{\it Step 3: continuity of $U_e$ under a normalization.} We now prove the continuity of $U_e$ under the normalization
\be\label{normalization}
\int_{\R^+\times\mathbb{T}^N} U_e^2(\xi,y)dyd\xi=1.
\ee
Take any $e_0\in\mathbb{S}^{N-1}$ and any sequence $(e_n)_{n\in\mathbb{N}}\subset \mathbb{S}^{N-1}$ such that $e_n\rightarrow e_0$ as $n\rightarrow +\infty$. Remember that $c_{e_n}\rightarrow c_{e_0}>0$ from the continuity of $c_e$. Let $\xi_n$ such that $\sup_{y\in\R^N} U_{e_n}(\xi_n,y)= \sigma$, where~$\sigma$ is defined in \eqref{F2} (remember also that $\sigma<\theta_x$ for all $x\in\R^N$).
Then, by standard parabolic estimates applied to the fronts $(t,x)\mapsto U_{e_n}(x\cdot e_n-c_{e_n} t,x)$ and since $c_{e_n}\rightarrow c_{e_0}>0$, the sequence $U_{e_{n}}(\cdot+\xi_n,\cdot)$ converges locally uniformly along with its derivatives up to the second order, up to a subsequence, to a function $U_{\infty}$ and $U_{\infty}$ satisfies
$$c_{e_0} \partial_{\xi} U_{\infty} +\partial_{\xi\xi} U_{\infty} +2\nabla_y \partial_{\xi} U_{\infty}\cdot e_0+\Delta_y U_{\infty} +f(y,U_{\infty})=0,\,\text{ for all $(\xi,y)\in\R\times\T^N$},
$$
and $\sup_{y\in\R^N} U_{\infty}(0,y)=\sigma$.
Since $U_e(\xi,y)$ is periodic in $y$ and $\partial_{\xi} U_e(\xi,y)<0$ for all $e\in\mathbb{S}^{N-1}$, one has that $U_{\infty}(\xi,y)$ is periodic in $y$ and $\partial_{\xi} U_{\infty}(\xi,y)\le 0$.
Thus, there are periodic functions $p^+(y)$ and $p^-(y)$ such that $\lim_{\xi\rightarrow -\infty} U_{\infty}(\xi,y)=p^+(y)$ and $\lim_{\xi\rightarrow +\infty} U_{\infty}(\xi,y)=p^-(y)$. Moreover, by standard parabolic estimates applied to $u_{\infty}(t,x)=U_{\infty}(x-c_{e_0}t,x)$, we get that $p^{\pm}(y)$ are $C^2(\R^N)$ periodic stationary solutions of \eqref{eq1.1}. From $\sup_{y\in\R^N} U_{\infty}(0,y)=\sigma$, it follows that $p^-(y)\le \sigma$. Then, by the strong maximum principle, $p^-(y)\equiv 0$. If $p^+(y)\equiv 1$, it implies that $u_{\infty}(t,x)=U_{\infty}(x\cdot e_0-c_{e_0}t,x)$ is a pulsating front connecting $0$ and $1$. Then, by Lemma \ref{lemma2.4}, one has that $U_{\infty}$ equals to $U_{e_0}$ up to shifts.

Assume by contradiction that $p^+(y)\not\equiv 1$. From the strong maximum principle, $p^+(y)<1$. Set $r=\sup_{x\in\mathbb{T}^N} p^+(y)<1$. Then, $U_{\infty}(\xi,y)\le r<1$ for all $(\xi,y)\in\R\times\mathbb{T}^N$ since $\partial_{\xi} U_{\infty}(\xi,y)\le 0$.

Let $u(t,x)=U_{e_0}(x\cdot e_0-c_{e_0} t,x)$ and $u_{\infty}(t,x)=U_{\infty}(x\cdot e_0-c_{e_0}t,x)$. Notice that $u_{\infty}(t,x)>0$ from the maximum principle, since $\sup_{y\in\R^N} U_{\infty}(0,y)=\sigma>0$ and $u_{\infty}\ge 0$. Let $\delta'>0$ such that $f(x,\cdot)$ is nonincreasing in $(-\infty,\delta']$. Since $U_{\infty}(\xi,y)$ is nonincreasing in $\xi$ and $\lim_{\xi\rightarrow +\infty} U_{\infty}(\xi,y)=p^-(y)=0$, there is a constant $A$ such that
$$u_{\infty}(t,x)=U_{\infty}(x\cdot e_0 -c_{e_0}t,x)\le \delta',\text{ for all $(t,x)\in\R\times\R^N$ such that $x\cdot e_0-c_{e_0}t\ge A$}.$$
Since $\lim_{\xi\rightarrow -\infty} U_{e_0}(\xi,y)=1$, there is $\tau>0$ such that
$$u(t+\tau,x)=U_{e_0}(x\cdot e_0-c_{e_0} t-c_{e_0} \tau,x)\ge r,\text{for all $(t,x)\in\R\times\R^N$ such that $x\cdot e_0-c_{e_0}t\le A$}.$$
Then, $u_{\infty}(t,x)\le u(t+\tau,x)$ for all $(t,x)\in\R\times\R^N$ such that $x\cdot e_0-c_{e_0}t\le A$ since $u_{\infty}(t,x)=U_{\infty}(x\cdot e_0-c_{e_0}t,x)\le r$. Define
$$\omega^-=\{(t,x)\in\R\times\R^N;\ x\cdot e_0-c_{e_0}t\ge A\}.$$
One can follow the proof of \cite[Lemma 4.2]{BH2} to get that $u_{\infty}(t,x)\le u(t+\tau,x)$ for $(t,x)\in\omega^-$. Then, $u_{\infty}(t,x)\le u(t+\tau,x)$ for all $(t,x)\in \R\times\R^N$.

Define
$$\tau^*=\inf\{\tau'\in\R;\ u_{\infty}(t,x)\le u(t+\tau',x)\text{ for all $(t,x)\in\R\times\R^N$}\}.$$
Observe that $\tau^*\in\R$ is well defined, since $u(t+\tau',x)\rightarrow 0$ as $\tau'\rightarrow -\infty$ for every $(t,x)\in\R\times\R^N$, while $u_{\infty}(t,x)>0$. Since $u(t,x)=U_{e_0}(x\cdot e_0-c_{e_0} t,x)$ and $\lim_{\xi\rightarrow -\infty} U_{e_0}(\xi,y)=1$, there are some $B>0$ such that $u(t+\tau^*,x)\ge (1+r)/2$ for any $(t,x)\in\R\times\R^N$ such that $x\cdot e-c_{e_0}t\le -B$. Note that $u_{\infty}(t,x)\le r<(1+r)/2<1$. Then, assume that $\inf_{-B\le x\cdot e_0-c_{e_0}t\le A} (u(t+\tau^*,x)-u_{\infty}(t,x))>0$ and $u_{\infty}(t,x)<u(t+\tau^*,x)$ for all $(t,x)\in\R\times\R^N$ such that $-B\le x\cdot e_0-c_{e_0}t\le A$. Then, there is $\eta_0>0$ such that for $\eta\in(0,\eta_0)$,
$$u_{\infty}(t,x)\le u(t+\tau^*-\eta,x),\text{ for all $(t,x)\in\R\times\R^N$ such that $-B\le x\cdot e_0-c_{e_0}t\le A$}.$$
Then, followed again the proof of \cite[Lemma 4.2]{BH2}, one has that $u_{\infty}(t,x)\le u(t+\tau^*-\eta,x)$ for $(t,x)\in\omega^-$ and also for all $x\cdot e_0-c_{e_0}t\le -B$, from the choice of $B$. Thus, $u_{\infty}(t,x)\le u(t+\tau^*-\eta,x)$ for all $(t,x)\in\R\times\R^N$ which contradicts the definition of $\tau^*$. Therefore,
$$\inf\{u(t+\tau^*,x)-u_{\infty}(t,x);\ -B\le x\cdot e_0-c_{e_0}t\le A\}=0.$$
Then, there is a sequence $(t_n,x_n)$ such that $-B\le x_n\cdot e_0-c_{e_0}t_n\le A$ and $u_{\infty}(t_n,x_n)=u(t_n+\tau^*,x_n)$. By periodicity of $U_{e_0}(\xi,y)$ and $U_{\infty}(\xi,y)$ with respect to $y$, one can assume without loss of generality that the sequence $(x_n)_{n\in\mathbb{N}}$ is bounded and that there is $(t^*,x^*)\in \R\times\R^N$ such that $x_n\rightarrow x^*$ and $t_n\rightarrow t^*$ as $n\rightarrow +\infty$. Therefore, $u_{\infty}(t^*,x^*)=u(t^*+\tau^*,x^*)$ and $u_{\infty}(\cdot,\cdot)\le u(\cdot+\tau^*,\cdot)$ in $\R\times\R^N$. The strong maximum principle implies that $u_{\infty}(\cdot,\cdot)\equiv u(\cdot+\tau^*,\cdot)$ in $\R\times\R^N$, which is a contradiction, since $u_{\infty}\le r$ in $\R\times\R^N$. Thus, $p^+(y)\equiv 1$ and whence $U_{\infty}$ equals to $U_{e_0}$ up to shifts.

Now we show that the sequence of shifts $\xi_n$ defined by $\sup_{y\in\R^N} U_{e_n}(\xi_n,y)= \sigma$ is bounded. Assume first by contradiction that, up to extraction of a subsequence, $\xi_n \rightarrow -\infty$ as $n\rightarrow +\infty$. Since $\sup_{y\in\R^N} U_{e_n}(\xi_n,y)=\sigma$ and $\partial_{\xi} U_{e_n}(\xi,y)<0$, one has that $U_{e_n}(\xi_n+\xi,y)\le \sigma$ for $\xi\ge 0$ and $y\in\R^N$. Followed by the proof of Lemma \ref{lemma2.1}, one gets that $U_{e_n}(\xi_n+\xi,y)\le \sigma e^{-\mu_1 \xi}$ for $\xi\ge 0$ and $y\in\R^N$, where $\mu_1$ is independent of $e_n$. Then, the normalization \eqref{normalization} implies that
$$1=\int_{\R^+\times\mathbb{T}^N} U_{e_n}^2(\xi,y)dyd\xi=\int_{(-\xi_n,+\infty)\times\mathbb{T}^N} U_{e_n}^2(\xi_n+\xi,y)dyd\xi\le \int_{(-\xi_n,+\infty)\times\mathbb{T}^N}\sigma^2 e^{-2\mu_1 \xi} d\xi\rightarrow 0,$$
as $\xi_n\rightarrow -\infty$, which is a contradiction. Then, consider that $\xi_n\rightarrow +\infty$ as $n\rightarrow +\infty$. By the normalization \eqref{normalization}, one has that $\int_{(-\xi_n,+\infty)\times\mathbb{T}^N} U_{e_n}^2(\xi_n+\xi,y)dyd\xi=1$.
Since, from the previous paragraph, $U_{e_n}(\xi_n+\xi,y)\rightarrow U_{e_0}(\xi+\xi_0,y)$ locally uniformly in $\R\times\mathbb{R}^N$ for some $\xi_0\in\R$, we get that $$\int_{[-K,K]\times\mathbb{T}^N} U_{e_n}^2(\xi_n+\xi,y)dyd\xi\rightarrow\int_{[-K,K]\times\mathbb{T}^N} U_{e_0}^2(\xi+\xi_0,y)dyd\xi$$
for any $K>0$ as $n\rightarrow +\infty$. Since $\xi_n\rightarrow +\infty$ as $n\rightarrow +\infty$, one has that for all $K>0$,
\begin{align*}
\int_{[-K,K]\times\mathbb{T}^N} U_{e_0}^2(\xi+\xi_0,y)dyd\xi\le&\lim_{n\rightarrow +\infty} \int_{[-K,K]\times\mathbb{T}^N} U_{e_n}^2(\xi_n+\xi,y)dyd\xi\\
\le& \lim_{n\rightarrow +\infty} \int_{(-\xi_n,+\infty)\times\mathbb{T}^N} U_{e_n}^2(\xi_n+\xi,y)dyd\xi=1.
\end{align*}
The limit as $K\rightarrow +\infty$ leads to a contradiction, since $U_{e_0}(\xi,y)\rightarrow 1$ as $\xi\rightarrow -\infty$. Thus, $\xi_n$ is bounded and up to extraction of a subsequence, $U_{e_n}(\xi,y)\rightarrow U_{e_0}(\xi+\xi_0,y)$ locally uniformly in $\R\times\mathbb{R}^N$ for some $\xi_0\in\R$ as $n\rightarrow +\infty$.

Then, we prove that the convergence $U_{e_n}(\xi,y)\rightarrow U_{e_0}(\xi+\xi_0,y)$ is in fact uniform in $\R\times\mathbb{R}^N$. Note that the uniformity with respect to the second variable $y$ immediately follows from the periodicity. Furthermore, for a given $\varepsilon>0$, let $K>0$ be such that
$$0\le U_{e_0}(\xi+\xi_0,y)\le \frac{\varepsilon}{2} \text{ for $\xi\ge K$, $y\in\R^N$ and } 1-\frac{\varepsilon}{2}\le U_{e_0}(\xi+\xi_0,y)\le 1 \text{ for $\xi\le -K$, $y\in\R^N$}.$$
Then, for $n$ large enough, one has that
$$\|U_{e_n}(\xi,y)-U_{e_0}(\xi+\xi_0,y)\|_{L^{\infty}([-K,K]\times\R^N)}\le \frac{\varepsilon}{2}.$$
In particular, $U_{e_n}(K,y)\le \varepsilon$ and $U_{e_n}(-K,y)\ge 1-\varepsilon$ for all $y\in\R^N$ and $n$ large enough. Since $\partial_{\xi}U_e(\xi,y)<0$, it follows that
$$0\le U_{e_n}(\xi,y)\le \varepsilon \text{ for $\xi\ge K$, $y\in\R^N$ and }1-\varepsilon\le U_{e_n}(\xi,y)\le 1 \text{ for $\xi\le -K$, $y\in\R^N$}.$$
Then, we get that
$$\|U_{e_n}(\xi,y)-U_{e_0}(\xi+\xi_0,y)\|_{L^{\infty}((-\infty,-K]\cup[K,+\infty\times\R^N)}\le \varepsilon,$$
for $n$ large enough. Therefore, one can conclude that $U_{e_n}(\xi,y)\rightarrow U_{e_0}(\xi+\xi_0,y)$ uniformly in $\R\times\R^N$ as $n\rightarrow +\infty$.

Finally, we show that $\xi_0=0$. By Lemma \ref{ASY}, for any $\varepsilon>0$, there exists $K>0$ large enough such that
$$\left|\int_{[K,+\infty)\times\mathbb{T}^N} \left(U_{e_n}^2(\xi,y) -  U_{e_0}^2(\xi+\xi_0,y)\right)dyd\xi\right|<\frac{\varepsilon}{2}.$$
Since $U_{e_n}(\xi,y)\rightarrow U_{e_0}(\xi+\xi_0,y)$ uniformly in $\R\times\R^N$ as $n\rightarrow +\infty$, it follows Lebesgue's dominated convergence theorem that there is $N$ such that for $n\ge N$,
$$\left|\int_{(0,K]\times\mathbb{T}^N} \left(U_{e_n}^2(\xi,y)- U_{e_0}^2(\xi+\xi_0,y)\right)dyd\xi\right|<\frac{\varepsilon}{2}.$$
Thus, for $n\ge N$, one has that
$$\left|\int_{\R^+\times\mathbb{T}^N} \left(U_{e_n}^2(\xi,y)- U_{e_0}^2(\xi+\xi_0,y)\right)dyd\xi\right|<\varepsilon.$$
which implies
$$\int_{\R^+\times\mathbb{T}^N} U_{e_n}^2(\xi,y)dyd\xi\rightarrow\int_{\R^+\times\mathbb{T}^N} U_{e_0}^2(\xi+\xi_0,y)dyd\xi,\text{ as $n\rightarrow +\infty$}.$$
From the normalization \eqref{normalization}, it follows that
$$\int_{\R^+\times\mathbb{T}^N} U_{e_0}^2(\xi+\xi_0,y)dyd\xi=1=\int_{\R^+\times\mathbb{T}^N} U_{e_0}^2(\xi,y)dyd\xi.$$
Since $\partial_{\xi} U_{e_0}(\xi,y)<0$, that implies $\xi_0=0$. Since $e_0$ is arbitrary taken, one concludes that $U_e$ is continuous with respect to $e\in\mathbb{S}^{N-1}$ under the normalization \eqref{normalization}. The proof of Theorem \ref{th1} is thereby complete.
\end{proof}

\subsection{Differentiability}
\noindent
This section is devoted to proving the differentiability of $(U_e,c_e)$ with respect to the direction $e$.

Let us introduce some notions first. Let $L^2(\R\times\T^N)$, $H^1(\R\times\T^N)$ and $H^2(\R\times\T^N)$ be the Banach spaces defined by
\begin{eqnarray*}
\begin{aligned}
L^2(\R\times\T^N)=&\{u\in L^2_{loc}(\R\times\R^N);\ u(\xi,y+k)=u(\xi,y)\,\text{ a.e. in $\R\times\R^N$ for any $k\in \mathbb{Z}^N$},\\
&\text{ and $u\in L^2(\R\times K)$ for any bounded set $K\subset \R^N$}\},\\
H^1(\R\times\T^N)=&\{u\in H^1_{loc}(\R\times\R^N);\ u(\xi,y+k)=u(\xi,y)\,\text{ a.e. in $\R\times\R^N$ for any $k\in \mathbb{Z}^N$},\\
&\text{ and $u\in H^1(\R\times K)$ for any bounded set $K\subset \R^N$}\},
\end{aligned}
\end{eqnarray*}
and
\begin{eqnarray*}
\begin{aligned}
H^2(\R\times\T^N)=&\{u\in H^2_{loc}(\R\times\R^N);\ u(\xi,y+k)=u(\xi,y)\,\text{ a.e. in $\R\times\R^N$ for any $k\in \mathbb{Z}^N$},\\
&\text{ and $u\in H^2(\R\times K)$ for any bounded set $K\subset \R^N$}\},
\end{aligned}
\end{eqnarray*}
endowed with the norms $\|u\|_{L^2(\R\times\T^N)}=(\int_{\R}\int_{\T^N} |u|^2 dyd\xi)^{1/2}$, $$\|u\|_{H^1(\R\times\T^N)}=\|u\|_{L^2(\R\times\T^N)}+\|\partial_{\xi}u\|_{L^2(\R\times\T^N)}+\sum_{i=1}^{N}\|\partial_{y_i}u\|_{L^2(\R\times\T^N)},$$
and
$$\|u\|_{H^2(\R\times\T^N)}=\|u\|_{H^1(\R\times\T^N)}+\|\partial_{\xi\xi}u\|_{L^2(\R\times\T^N)} +\sum_{i=1}^{N}\|\partial_{\xi}\partial_{y_i}u\|_{L^2(\R\times\T^N)} +\sum_{j=1}^{N}\sum_{i=1}^{N}\|\partial_{y_j}\partial_{y_i}u\|_{L^2(\R\times\T^N)}.$$

Fix a real $\beta>0$ and for any $e\in\mathbb{S}^{N-1}$, define a linear operator
$$
M_e(v):=c_e\partial_{\xi} v+\partial_{\xi\xi} v +2\nabla_y \partial_{\xi} v\cdot e +\Delta_y v -\beta v,$$
where
$$v\in D:=\{v\in H^1(\R\times\T^N);\ \partial_{\xi\xi}v+2\nabla_y\partial_{\xi}v\cdot e+\Delta_y v                                                                          \in L^2(\R\times\T^N)\}.$$
The space $D$ is endowed with the norm $\|v\|_{D}=\|v\|_{H^1(\R\times\mathbb{T}^N)}+\|\partial_{\xi\xi}v+2\nabla_y\partial_{\xi}v\cdot e+\Delta_y v\|_{L^2(\R\times\T^N)}$.
Before going further, we need some properties of the linearization of \eqref{Ue} at $U_e$. For any $e\in\mathbb{S}^{N-1}$, define
\begin{align*}
H_e(v):=c_e\partial_{\xi} v+\partial_{\xi\xi} v +2\nabla_y \partial_{\xi} v\cdot e +\Delta_y v +f_u(y,U_e)v,\quad v\in D,
\end{align*}
and let the adjoint operator $H_e^*$ be defined by $H_e^*(u)=-c_e\partial_{\xi} u+\partial_{\xi\xi} u +2\nabla_y \partial_{\xi} u\cdot e +\Delta_y u +f_u(y,U_e)u$ for $u\in D$.

From the proofs of Lemma 3.1, Lemma 3.2 and Lemma 3.3 in \cite{DHZ}, one has the following lemma.

\begin{lemma}\label{M}
For every $e\in\mathbb{S}^{N-1}$, the operator $M_e:D\rightarrow L^2(\R\times\T^N)$ is invertible. For all $e\in\mathbb{S}^{N-1}$ and $g\in L^2(\R\times\mathbb{T}^N)$, there is a constant $C$ such that
$$
\|M_e^{-1}(g)\|_{H^1(\R\times\mathbb{T}^N)}\le C\|g\|_{L^2(\R\times\mathbb{T}^N)}.
$$
For every $e\in\mathbb{S}^{N-1}$, every $g\in L^2(\R\times\mathbb{T}^N)$ and every sequences $(e_n)_{n\in\mathbb{N}}$ in $\mathbb{S}^{N-1}$, $(g_n)_{n\in\mathbb{N}}$ in $L^2(\R\times\mathbb{T}^N)$ such that $e_n\rightarrow e$, $\|g_n-g\|_{L^2(\R\times\mathbb{T}^N)}\rightarrow 0$ as $n\rightarrow +\infty$, there holds $M_{e_n}^{-1}(g_n)\rightarrow M_e^{-1}(g)$ in $H^1(\R\times\mathbb{T}^N)$ as $n\rightarrow +\infty$.
\end{lemma}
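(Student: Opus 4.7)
The plan is to diagonalize $M_e$ via Fourier decomposition, after which all three claims reduce to elementary pointwise estimates on the symbol. Since $v$ is $\mathbb{Z}^N$-periodic in $y$, I write $v(\xi, y) = \sum_{k \in \mathbb{Z}^N} \int_{\R} \hat{v}(\tau, k)\, e^{i\tau \xi + 2\pi i k \cdot y}\, d\tau$ (Fourier series in $y$, Fourier transform in $\xi$). A direct computation shows that $M_e$ acts as multiplication by
\[
p_e(\tau, k) = -(\tau + 2\pi k \cdot e)^2 - 4\pi^2 \bigl(|k|^2 - (k \cdot e)^2\bigr) - \beta + i c_e \tau,
\]
whose real part is at most $-\beta$ and whose imaginary part is $c_e \tau$.

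For invertibility and the $L^2$ bound, the estimate $|p_e(\tau, k)| \ge \beta$ uniformly in $(\tau, k)$ makes $\widehat{M_e^{-1} g}(\tau, k) := \hat{g}(\tau, k)/p_e(\tau, k)$ well-defined, with $\|M_e^{-1} g\|_{L^2} \le \|g\|_{L^2}/\beta$. For the $H^1$ bound, I combine three consequences of the symbol: $|p_e| \ge \beta$; $|p_e| \ge c_e |\tau|$ (from the imaginary part); and $|p_e| \ge (\tau + 2\pi k \cdot e)^2 + 4\pi^2(|k|^2 - (k\cdot e)^2)$ (from the real part). These yield $\tau^2/|p_e|^2 \le 1/c_e^2$, and then the triangle inequality $(2\pi k \cdot e)^2 \le 2\tau^2 + 2(\tau + 2\pi k \cdot e)^2$ combined with the real-part bound gives $|k|^2/|p_e|^2 \le C_e$. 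Plancherel then delivers $\|M_e^{-1} g\|_{H^1} \le C_e \|g\|_{L^2}$, and thanks to $\inf_{e \in \mathbb{S}^{N-1}} c_e > 0$ from Theorem \ref{th1}, the constant $C_e$ can be chosen uniform in $e$. The fact that $M_e^{-1} g$ lies in $D$ follows because the combination $\partial_{\xi\xi} v + 2\nabla_y \partial_\xi v \cdot e + \Delta_y v$ has symbol $p_e + \beta - i c_e \tau$, whose modulus is controlled by $|p_e| + \beta + c_e |\tau|$ and hence belongs to $L^2$.

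For the continuity statement, let $e_n \to e$ and $g_n \to g$ in $L^2$, and set $v_n := M_{e_n}^{-1}(g_n)$. On the Fourier side, I decompose
\[
\hat{v}_n - \widehat{M_e^{-1}(g)} = \frac{\hat{g}_n - \hat{g}}{p_{e_n}} + \hat{g} \cdot \left(\frac{1}{p_{e_n}} - \frac{1}{p_e}\right).
\]
The first piece tends to zero in the weighted $L^2$ norm encoding $\|\cdot\|_{H^1}$ by the uniform bounds already established. For the second, the continuity $c_{e_n} \to c_e$ from Theorem \ref{th1} gives pointwise convergence $p_{e_n}(\tau, k) \to p_e(\tau, k)$, and hence pointwise convergence to zero of the integrand; the uniform bounds $(1 + |\tau| + |k|)/|p_{e_n}| \le C$ supply a dominating function $C|\hat{g}|$ (which is in $L^2$) and Lebesgue's dominated convergence theorem concludes $H^1$-convergence.

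The main subtlety I expect is the careful identification of the domain $D$ with its Fourier-side description and the verification that the constant in the $H^1$ estimate is genuinely uniform in $e$; the latter rests on $\inf_{e \in \mathbb{S}^{N-1}} c_e > 0$ from Theorem \ref{th1}, without which the control $|\tau|/|p_e| \le 1/c_e$ would degenerate. Everything else is routine symbol analysis and dominated convergence.
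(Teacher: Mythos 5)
Your proof is correct, but it takes a genuinely different route from the paper's. The paper does not prove Lemma \ref{M} directly: it invokes the proofs of Lemmas 3.1--3.3 of \cite{DHZ} for the more general operator $M_{c,e}$ with an arbitrary speed $c>0$, and then specializes $c=c_e$ using the continuity of $e\mapsto c_e$ and $\inf_{e\in\mathbb{S}^{N-1}}c_e>0$ from Theorem \ref{th1}. You instead exploit the fact that $M_e$ has constant coefficients on $\R\times\T^N$ and diagonalize it by Fourier transform in $\xi$ and Fourier series in $y$: your symbol $p_e(\tau,k)$ is computed correctly, and the three bounds $|p_e|\ge\beta$, $|p_e|\ge c_e|\tau|$ and $|p_e|\ge(\tau+2\pi k\cdot e)^2+4\pi^2\left(|k|^2-(k\cdot e)^2\right)$, combined with $(2\pi k\cdot e)^2\le 2\tau^2+2(\tau+2\pi k\cdot e)^2$, indeed give $\|M_e^{-1}g\|_{H^1}\le C\|g\|_{L^2}$ with $C$ depending only on $\beta$ and $\underline{c}=\inf_e c_e>0$, hence uniformly in $e$; the verification that $M_e^{-1}g\in D$ and the continuity statement via dominated convergence are also sound, and your appeal to Theorem \ref{th1} is exactly what the paper's own remark following Lemma \ref{M} uses, so there is no circularity in the paper's logical order. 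What your route buys is a short, self-contained, quantitative proof with an explicit uniform constant; what the paper's route buys is that it defers all the functional-analytic work to \cite{DHZ} and avoids setting up the Fourier-side description of $D$. Two points to make explicit in a full write-up: (i) invertibility also requires injectivity of $M_e$ on $D$, so you should record that $\widehat{M_e v}=p_e\hat v$ (as tempered distributions, both sides lying in $L^2$) for every $v\in D$ --- this is precisely the ``identification of $D$ with its Fourier-side description'' you flag, and it is routine; and (ii) in the continuity step the dominating constant must be uniform in $n$, which again uses $c_{e_n}\ge\underline{c}>0$ and $|p_{e_n}|\ge\beta$.
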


\begin{remark}
Define
$$M_{c,e}(v):=c\partial_{\xi} v+\partial_{\xi\xi} v +2\nabla_y \partial_{\xi} v\cdot e +\Delta_y v -\beta v.$$
Following the proofs of Lemma 3.1, Lemma 3.2 and Lemma 3.3 in \cite{DHZ}, one can actually obtain that for every $e\in\mathbb{S}^{N-1}$ and $c>0$, the operator $M_{c,e}:D\rightarrow L^2(\R\times\T^N)$ is invertible and for every $e\in\mathbb{S}^{N-1}$, $c>0$, $g\in L^2(\R\times\mathbb{T}^N)$ and every sequences $(e_n)_{n\in\mathbb{N}}$ in $\mathbb{S}^{N-1}$, $(c_n)_{n\in\mathbb{N}}$ in $(0,+\infty)$ and $(g_n)_{n\in\mathbb{N}}$ in $L^2(\R\times\mathbb{T}^N)$ such that $e_n\rightarrow e$, $c_n\rightarrow c$, $\|g_n-g\|_{L^2(\R\times\mathbb{T}^N)}\rightarrow 0$ as $n\rightarrow +\infty$, there holds $M_{c_n,e_n}^{-1}(g_n)\rightarrow M_{c,e}^{-1}(g)$ in $H^1(\R\times\mathbb{T}^N)$ as $n\rightarrow +\infty$. Since $c_e$ is continuous with respect to $e\in\mathbb{S}^{N-1}$ and $\inf_{e\in\mathbb{S}^{N-1}} c_e>0$, one gets Lemma \ref{M} immediately.
\end{remark}

From the proof of Lemma 4.1 in \cite{DHZ}, one has the following lemma.

\begin{lemma}\label{lemmaH}
The operator $H_e$ and $H_e^*$ have algebraically simple eigenvalue $0$ and the range of $H_e$ is closed in $L^2(\R\times\T^N)$, and the kernel of $H_e$ is generated by $\partial_{\xi} U_e$.
\end{lemma}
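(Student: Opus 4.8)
The plan is to treat $H_e$ as a degenerate-elliptic operator on the cylinder $\R\times\T^N$ whose zeroth-order coefficient $f_u(y,U_e(\xi,y))$ converges, by Lemma \ref{ASY} and standard parabolic estimates, exponentially fast to the $\xi$-independent potentials $f_u(y,1)$ as $\xi\to-\infty$ and $f_u(y,0)$ as $\xi\to+\infty$. First I would differentiate the front equation \eqref{Ue} in $\xi$ to obtain $H_e(\partial_\xi U_e)=0$; by Lemma \ref{lemma2.1} one has $\partial_\xi U_e<0$ on $\R\times\T^N$, and Lemma \ref{ASY} together with elliptic estimates gives that $\partial_\xi U_e$ and its derivatives decay exponentially as $\xi\to\pm\infty$, so $\partial_\xi U_e\in D$ and $0$ is an eigenvalue of $H_e$ with the sign-definite eigenfunction $w_0:=-\partial_\xi U_e>0$. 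To locate the rest of the spectrum I would introduce the two $\xi$-independent asymptotic operators $H_e^{\pm}(v):=c_e\partial_\xi v+\partial_{\xi\xi}v+2\nabla_y\partial_\xi v\cdot e+\Delta_yv+f_u(y,i)v$, with $i=1$ for $H_e^{-}$ and $i=0$ for $H_e^{+}$. Since $c_e\partial_\xi$ is skew-adjoint on $L^2(\R\times\T^N)$, since $\int_{\R\times\T^N}(\partial_{\xi\xi}v+2\nabla_y\partial_\xi v\cdot e+\Delta_yv)\,v\,dyd\xi=-\int_{\R\times\T^N}|e\,\partial_\xi v+\nabla_y v|^2\,dyd\xi\le0$ (using $|e|=1$), and since $f_u(y,i)\le-\gamma$ by \eqref{F2}, one gets $\operatorname{Re}\langle H_e^{\pm}v,v\rangle\le-\gamma\|v\|_{L^2(\R\times\T^N)}^2$; analysing $H_e^{\pm}-\lambda$ by Fourier transform in $\xi$ together with the discrete spectrum of the resulting $\Z^N$-periodic elliptic operators in $y$ then shows that $H_e^{\pm}-\lambda$ is boundedly invertible for every $\lambda$ with $\operatorname{Re}\lambda>-\gamma$.

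Invertibility of both asymptotic operators is exactly what makes the Fredholm theory of asymptotically-constant elliptic operators on a cylinder applicable — this is the linear machinery behind Lemma \ref{M}, i.e.\ the arguments in the proofs of the corresponding lemmas of \cite{DHZ}: gluing $(H_e^{-})^{-1}$ near $\xi=-\infty$, $(H_e^{+})^{-1}$ near $\xi=+\infty$ and a local right inverse on a bounded $\xi$-slab produces a parametrix showing that $H_e-\lambda\colon D\to L^2(\R\times\T^N)$ is Fredholm for every $\lambda$ with $\operatorname{Re}\lambda>-\gamma$. Its index is constant on this connected set and equals $0$ there, since $H_e-\lambda$ is invertible as soon as $\operatorname{Re}\lambda>\sup_{(x,u)}f_u(x,u)$ (again by a numerical-range estimate using skew-adjointness of $c_e\partial_\xi$ and \eqref{F2}). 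Hence $H_e$ has closed range in $L^2(\R\times\T^N)$ and $\dim\ker H_e=\dim\ker H_e^*<+\infty$. To identify the kernel I would perform the ground-state substitution $v=w_0\psi$ for $v\in\ker H_e$: since $H_ew_0=0$ and $w_0>0$, the equation $H_ev=0$ becomes a linear equation for $\psi$ with no zeroth-order term (equivalently, $\psi(x\cdot e-c_et,x)$ solves a linear parabolic equation with no zeroth-order term), so $\psi$ obeys the strong maximum principle; combining this with the exponential decay above and the fact that any nonnegative element of $\ker H_e$ must decay at $\xi\to\pm\infty$ at exactly the principal rate of $H_e^{\pm}$ (the faster modes not being sign-definite), a sliding argument comparing $v$ with multiples of $w_0$ forces $v\in\R w_0$. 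Therefore $\ker H_e=\R\,\partial_\xi U_e$, and by the index-$0$ property $\ker H_e^*=\R\,\phi^*$, where $\phi^*$ may be taken strictly positive, being the principal eigenfunction of the adjoint of the asymptotically negative operator $H_e$ (Krein--Rutman).

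It remains to prove algebraic simplicity, i.e.\ that $\partial_\xi U_e\notin\operatorname{range}(H_e)$, equivalently that $H_ew=\partial_\xi U_e$ has no solution $w\in D$. Since the range is closed, the Fredholm alternative gives $\partial_\xi U_e\in\operatorname{range}(H_e)$ if and only if $\int_{\R\times\T^N}(\partial_\xi U_e)\,\phi^*\,dyd\xi=0$; but $\partial_\xi U_e<0$ and $\phi^*>0$, so this integral is strictly negative, and $0$ is an algebraically simple eigenvalue of $H_e$. The identical computation (the range of $H_e^*$ being the orthogonal complement of $\ker H_e=\R\,\partial_\xi U_e$, and $\partial_\xi U_e$ having constant sign) shows $\phi^*\notin\operatorname{range}(H_e^*)$, so $0$ is algebraically simple for $H_e^*$ as well. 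I expect the Fredholm step to be the main obstacle: the perturbation of $H_e$ away from the asymptotic operators $H_e^{\pm}$ is \emph{not} compact on the unbounded cylinder, so one cannot simply invoke Weyl's theorem and must instead build the two-sided parametrix above — precisely the content of the linear lemmas of \cite{DHZ} underlying Lemma \ref{M}; once the exponential decay furnished by Lemma \ref{ASY} is available, the kernel-identification and algebraic-simplicity steps are standard maximum-principle and Fredholm-alternative arguments.
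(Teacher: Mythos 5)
The paper offers no proof of this lemma at all: it is quoted directly from the proof of Lemma 4.1 in \cite{DHZ}. Your reconstruction follows what is essentially the classical (and the cited) route: invertibility of the two limiting operators $H_e^{\pm}$ (your numerical-range computation is correct -- the second-order part is only degenerate elliptic, but nonpositivity is all that is used), a parametrix on the cylinder giving Fredholmness of index $0$ and hence closed range, a positivity/sliding argument identifying $\ker H_e=\R\,\partial_\xi U_e$, and the Fredholm alternative $\int_{\R\times\T^N}\partial_\xi U_e\,\phi^*\,dyd\xi\neq 0$ for algebraic simplicity. Deferring the Fredholm machinery to the linear lemmas behind Lemma \ref{M} is legitimate here, since the paper defers the entire statement to \cite{DHZ}; note only that the gluing of $(H_e^{\pm})^{-1}$ with a local inverse produces first-order commutator terms from the cutoffs, so it is the $H^1$ bound $\|M_e^{-1}g\|_{H^1}\le C\|g\|_{L^2}$ of Lemma \ref{M}, not the bare $L^2$ resolvent estimates from the numerical range, that makes the parametrix close up.

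The one step that, as written, does not stand is the positivity of the adjoint kernel element $\phi^*$ ``by Krein--Rutman''. On the unbounded cylinder $H_e$ has essential spectrum and no compact resolvent, and the associated semigroup is not compact, so Krein--Rutman does not apply off the shelf; yet your final argument for algebraic simplicity (and for simplicity of $0$ for $H_e^*$) hinges entirely on $\phi^*$ having a constant sign. What you can say cheaply is $\dim\ker H_e^*=\dim\ker H_e=1$ once the index-zero property and $\ker H_e=\R\,\partial_\xi U_e$ are established; the sign of the generator then requires a separate argument, e.g.\ observing that $H_e^*$ has the same degenerate structure with $c_e$ replaced by $-c_e$, so that elements of its kernel solve a linearized equation in the moving frame with potential $\le-\gamma$ near $\xi=\pm\infty$, whence they decay exponentially and a maximum-principle/sliding (or touching) argument of the same kind as for $\ker H_e$ yields the sign. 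Relatedly, in the kernel identification itself the assertion that any sign-definite kernel element must decay ``at exactly the principal rate, the faster modes not being sign-definite'' is stated but not proved; it is exactly the input your sliding comparison with $-\partial_\xi U_e$ needs at $\xi=\pm\infty$, so it should be established (via the exponential dichotomy/decay analysis of the limiting operators) rather than assumed. With these two points repaired, your outline matches the standard proof that the paper cites.
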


For any $e\in\mathbb{S}^{N-1}$, $v\in H^2(\R\times\T^N)$, $\vartheta\in \R$ and $\eta\in\R^N$, define
$$K_e(v,\vartheta,\eta)=\vartheta\partial_{\xi} (U_e+v)+2\nabla_y\partial_{\xi} (U_e+v)\cdot \eta+f(y,U_e+v)-f(y,U_e)+\beta v,$$
and
$$
G_e(v,\vartheta,\eta):=\left(v+M_e^{-1}(K_e(v,\vartheta,\eta)),\int_{\R^+\times\T^N} \left[(U_e(\xi,y)+v(\xi,y))^2-U_e^2(\xi,y)\right]dyd\xi\right).
$$
In view of Lemma \ref{M}, the function $G_e$ maps $H^2(\R\times\T^N)\times\R\times\R^N$ to $D\times\R$. Note that $G_e(0,0,0)=0$.

\begin{lemma}\label{lemma2.10}
For every $e\in\mathbb{S}^{N-1}$, the function $G_e:\ H^2(\R\times\T^N)\times\R\times\R^N\rightarrow D\times\R$ is continuous and it is continuously Fr\'{e}chet differentiable with respect to $(v,\vartheta)$ and doubly continuously Fr\'{e}chet differentiable with respect to $\eta$.
\end{lemma}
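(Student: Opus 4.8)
The plan is to analyze the two components of $G_e$ separately, since $G_e(v,\vartheta,\eta) = (G_e^{(1)}(v,\vartheta,\eta), G_e^{(2)}(v,\vartheta,\eta))$ with $G_e^{(1)} = v + M_e^{-1}(K_e(v,\vartheta,\eta)) \in D$ and $G_e^{(2)}(v,\vartheta,\eta) = \int_{\R^+\times\T^N}[(U_e+v)^2 - U_e^2]\,dyd\xi \in \R$. For the second component, expand $(U_e+v)^2 - U_e^2 = 2U_e v + v^2$, so $G_e^{(2)}$ is an affine-plus-quadratic functional of $v$ alone (independent of $\vartheta,\eta$); continuity and smoothness in $v$ (indeed $C^\infty$) follow from the continuity of the bilinear map $(v,w)\mapsto \int_{\R^+\times\T^N} vw$ on $L^2$, and $H^2(\R\times\T^N)\hookrightarrow L^2(\R^+\times\T^N)$ restricted to $\xi>0$. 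Its Fréchet derivative in $v$ is $h\mapsto \int_{\R^+\times\T^N} 2(U_e+v)h\,dyd\xi$, which is trivially continuous in $(v,\vartheta,\eta)$, and higher $\eta$-derivatives vanish. So the second component is painless.

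The work is all in the first component. Here the key structural fact is that $v\mapsto v$ is the identity on $D$ (hence $C^\infty$), so everything reduces to the map $(v,\vartheta,\eta)\mapsto M_e^{-1}(K_e(v,\vartheta,\eta))$, and by Lemma \ref{M} $M_e^{-1}: L^2(\R\times\T^N)\to D$ is bounded and linear, hence $C^\infty$; composition with a bounded linear map preserves Fréchet differentiability of every order and its continuity. Therefore it suffices to show that $K_e : H^2(\R\times\T^N)\times\R\times\R^N \to L^2(\R\times\T^N)$ is continuous, $C^1$ in $(v,\vartheta)$, and $C^2$ in $\eta$. I would break $K_e$ into its summands: the terms $\vartheta\,\partial_\xi(U_e+v)$ and $2\nabla_y\partial_\xi(U_e+v)\cdot\eta$ and $\beta v$ are bilinear (or linear) in their arguments with values in $L^2$ — note $\partial_\xi v$ and $\nabla_y\partial_\xi v$ lie in $L^2$ for $v\in H^2$, and $\partial_\xi U_e, \nabla_y\partial_\xi U_e \in L^2(\R\times\T^N)$ by Lemma \ref{ASY} and parabolic estimates as in the proof of Lemma \ref{lemma2.2} — so these are $C^\infty$ in $(v,\vartheta,\eta)$; in particular the $\eta$-dependence is \emph{affine}, so $C^2$ in $\eta$ is immediate (second derivative zero) once continuity of the coefficient map is established. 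The only genuinely nonlinear term is $N_e(v):=f(y,U_e+v)-f(y,U_e)$.

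So the main obstacle — and the only real content — is showing that the Nemytskii operator $v\mapsto f(y,U_e+v)-f(y,U_e)$ is a well-defined, continuous, continuously differentiable map from $H^2(\R\times\T^N)$ into $L^2(\R\times\T^N)$. I would argue as follows: since $f_u$ is globally bounded (by the assumptions on $f$ near $-\infty$ and $+\infty$ and continuity), one has the pointwise bound $|f(y,U_e+v)-f(y,U_e)|\le \|f_u\|_\infty |v|$, giving $\|N_e(v)\|_{L^2}\le \|f_u\|_\infty\|v\|_{L^2}<\infty$, so $N_e$ maps into $L^2$; a similar mean-value estimate combined with dominated convergence (using that $f_u$ is uniformly continuous and that $v_n\to v$ in $H^2$ implies $v_n\to v$ in $L^2$ and a.e. along a subsequence) gives continuity. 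For differentiability, the candidate derivative at $v$ is the multiplication operator $h\mapsto f_u(y,U_e+v)\,h$, which is bounded on $L^2$ since $f_u$ is bounded; the remainder $f(y,U_e+v+h)-f(y,U_e+v)-f_u(y,U_e+v)h = \big(\int_0^1 [f_u(y,U_e+v+sh)-f_u(y,U_e+v)]\,ds\big)h$ is $o(\|h\|_{H^2})$ in $L^2$ because $f_u$ is (globally Lipschitz, actually) uniformly continuous and $\|h\|_{L^2}\le\|h\|_{H^2}$; a standard argument — pass to the $L^2$ bound $\|R\|_{L^2}\le \sup_{s\in[0,1]}\|f_u(y,U_e+v+sh)-f_u(y,U_e+v)\|_\infty \cdot \|h\|_{L^2}$ when $f_u$ is uniformly continuous, or directly $\le \mathrm{Lip}(f_u)\|h\|_\infty\|h\|_{L^2}$ — shows the difference quotient converges. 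Continuity of $v\mapsto f_u(y,U_e+v)$ as a map into the operators on $L^2$ (equivalently, in $L^\infty$ along convergent sequences, or via dominated convergence applied to $\|(f_u(y,U_e+v_n)-f_u(y,U_e+v))h\|_{L^2}$ for fixed $h$ together with uniform boundedness) then yields that $N_e$ is $C^1$. Assembling: $G_e^{(1)} = v + M_e^{-1}\circ K_e$ with $K_e$ continuous and $C^1$ in $(v,\vartheta)$ and affine (hence $C^2$) in $\eta$, so by the chain rule $G_e$ has the asserted regularity.
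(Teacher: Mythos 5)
Your plan is the paper's own route: split $G_e=(G_1,G_2)$, handle $G_2$ as a quadratic functional via Cauchy--Schwarz, write $G_1=v+M_e^{-1}\circ K_e$ and use the boundedness of $M_e^{-1}$ from Lemma \ref{M}, observe that the $\eta$-dependence is affine so the second $\eta$-derivative is trivial, and reduce everything to the Nemytskii term $v\mapsto f(y,U_e+v)-f(y,U_e)$. The paper treats that last term even more briefly (it computes the Gateaux limit and invokes the global Lipschitz continuity of $f$ and $f_u$ in $u$), so the only substantive difference is your attempt to justify it, and that is where the one genuine soft spot lies.

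Both remainder bounds you propose, $\|R\|_{L^2}\le\sup_{s\in[0,1]}\|f_u(\cdot,U_e+v+sh)-f_u(\cdot,U_e+v)\|_{\infty}\,\|h\|_{L^2}$ and $\|R\|_{L^2}\le\mathrm{Lip}(f_u)\,\|h\|_{\infty}\,\|h\|_{L^2}$, only give $o(\|h\|_{H^2})$ if $\|h\|_{L^\infty}\to 0$ when $\|h\|_{H^2(\R\times\T^N)}\to 0$, i.e. they use the embedding $H^2(\R\times\T^N)\hookrightarrow L^\infty$. The base domain has dimension $N+1$, so this embedding holds only for $N\le 2$, whereas the paper works in arbitrary dimension. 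The repair is standard and stays within your framework: from the pointwise bound $|R(h)|\le\tfrac12\mathrm{Lip}(f_u)|h|^2$ one gets $\|R(h)\|_{L^2}\le C\|h\|_{L^4}^2\le C'\|h\|_{H^2}^2$ whenever $H^2(\R\times\T^N)\hookrightarrow L^4$ (valid for $N+1\le 8$), and for larger $N$ one interpolates the two pointwise bounds $|R(h)|\le 2\|f_u\|_\infty|h|$ and $|R(h)|\le\tfrac12\mathrm{Lip}(f_u)|h|^2$ to obtain $|R(h)|\le C|h|^{1+\theta}$ with $\theta\in(0,1]$ chosen so that $H^2\hookrightarrow L^{2(1+\theta)}$, which again yields $o(\|h\|_{H^2})$. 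The same caveat applies to your continuity-of-the-derivative step: dominated convergence for a fixed $h$ gives only strong operator convergence, while continuous Fr\'echet differentiability requires convergence in $\mathcal{L}(H^2(\R\times\T^N)\times\R,\,D\times\R)$; this follows instead from $\|(f_u(\cdot,U_e+v_1)-f_u(\cdot,U_e+v_2))h\|_{L^2}\le\mathrm{Lip}(f_u)\|v_1-v_2\|_{L^4}\|h\|_{L^4}$ (or its interpolated analogue), not from the $L^\infty$ or pointwise route. With these substitutions your argument is complete and coincides with the paper's.
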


\begin{proof}
Since $K_e$ is affine with respect to $\vartheta$ and $\eta$ and the function $f(y,u)$ is globally Lipschitz-continuous in $u$ uniformly for $y\in\T^N$, it is elementary to get the continuity of $K_e$. Then, from lemma \ref{M}, one has that $G_1(v,\vartheta,\eta):=v+M_e^{-1}(K_e(v,\vartheta,\eta))$ is continuous in $H^2(\R\times\T^N)\times\R\times\R^N$. Since the continuity of $G_2:=\int_{\R^+\times\T^N} \left[(U_e(\xi,y)+v(\xi,y))^2-U_e^2(\xi,y)\right]dyd\xi$ is obvious from Cauchy-Schwarz inequality, it follows that $G_e=(G_1,G_2)$ is continuous in $H^1(\R\times\T^N)\times\R\times\R^N$.

Since $G_e$ is affine with respect to $\eta$, it is obvious that $G_e$ is doubly continuously Fr\'{e}chet differentiable with respect to $\eta$ and the first ordered derivative is
$$\partial_{\eta} G_e(v,\vartheta,\eta)\tilde{\eta}=\left(M_e^{-1}(2\nabla_y\partial_{\xi} (U_e+v)\cdot\tilde{\eta}),0\right),$$
for any $(v,\vartheta,\eta)\in H^2(\R\times\mathbb{T}^N)\times\R\times\R^N$ and $\tilde{\eta}\in\R$. Now we show that $G_e$ is continuously Fr\'{e}chet differentiable with respect to $(v,\vartheta)$. Notice that $f(y,U_e+u)$ is continuously Fr\'{e}chet differentiable with respect to $u$. In fact, for any $u$, $v\in H^2(\R\times\T^N)$, one has that
$$
\lim_{h\rightarrow 0} \frac{f(y,U_e+u+hv)-f(y,U_e+u)}{h}=f_u(y,U_e+u)v,
$$
in $L^2(\R\times\mathbb{T}^{N})$. Hence, the function $G_e(v,\vartheta,\eta)$ is Fr\'{e}chet differentiable with respect to $(v,\vartheta)$ with derivative
\be\label{2.14}
\begin{aligned}
&\partial_{(v,\vartheta)} G_e(v,\vartheta,\eta)(\tilde{v},\tilde{\vartheta})\\
&=\left(\baa{c}
\tilde{v}+M_e^{-1}(\vartheta\partial_{\xi} \tilde{v}+\tilde{\vartheta}\partial_{\xi}(U_e+v)+2\nabla_y\partial_{\xi} \tilde{v}\cdot \eta +f_u(y,U_e+v)\tilde{v} +\beta\tilde{v})\\
2\int_{\R^+\times\T^N} (U_e(\xi,y)+v(\xi,y))\tilde{v}(\xi,y)dyd\xi
\eaa
\right).\end{aligned}
\ee
for any $(v,\vartheta,\eta)\in H^2(\R\times\mathbb{T}^N)\times\R\times\R^N$ and $(\tilde{v},\tilde{\theta})\in H^2(\R\times\mathbb{T}^N)\times\R$.
 Since $f_u(y,u)$ is globally Lipschitz-continuous in $u$ uniformly for $y\in\T^N$ and following the arguments in the first paragraph, one gets that $\partial_{(v,\vartheta)} G_e: H^2(\R\times\T^N)\times\R\times\R^N\rightarrow \mathcal{L}(H^2(\R\times\T^N)\times\R, D\times\R)$ is continuous.

This completes the proof.
\end{proof}
\vskip 0.3cm

For any $e\in\mathbb{S}^{N-1}$ and $(\tilde{v},\tilde{\vartheta})\in D\times\R$, define
\begin{align}\label{Q}
Q_e(\tilde{v},\tilde{\vartheta})=\left(\tilde{v}+M_e^{-1}(\tilde{\vartheta}\partial_{\xi}U_e +f_u(y,U_e)\tilde{v} +\beta\tilde{v}),
2\int_{\R^+\times\T^N} U_e(\xi,y)\tilde{v}(\xi,y)dyd\xi\right).
\end{align}
Notice that $Q_e$ has the same form as $\partial_{(v,\vartheta)} G_e(0,0,0)$ from \eqref{2.14}.

\begin{lemma}\label{lemma2.11}
For every $e\in\mathbb{S}^{N-1}$, the operator $Q_e: D\times\R\rightarrow D\times\R$ is invertible. Then, for every $e\in\mathbb{S}^{N-1}$, $g\in D$, $d\in\R$ and every sequences $(e_n)_{n\in\mathbb{N}}$ in $\mathbb{S}^{N-1}$, $(g_n)_{n\in\mathbb{N}}$ in $D$, $(d_n)_{n\in\mathbb{N}}$ in $\R$ such that $e_n\rightarrow e$, $\|g_n-g\|_{D}\rightarrow 0$ and $|d_n-d|\rightarrow 0$ as $n\rightarrow +\infty$, there holds $Q_{e_n}^{-1}(g_n,d_n)\rightarrow Q_e^{-1}(g,d)$ in $L^2(\R\times\mathbb{T}^N)\times\R$ as $n\rightarrow +\infty$, where the space $L^2(\R\times\mathbb{T}^N)\times\R$ is endowed with the norm $\|(\tilde{v},\tilde{\vartheta})\|_{L^2(\R\times\mathbb{T}^N)\times\R}=\|\tilde{v}\|_{L^2(\R\times\mathbb{T}^N)}+|\tilde{\vartheta}|$. Furthermore, for all $e\in\mathbb{S}^{N-1}$, $g\in L^2(\R\times\mathbb{T}^N)$ and $d\in\R$, there is $C>0$ such that
$$\|Q_e^{-1}(g,d)\|_{L^2(\R\times\mathbb{T}^N)\times\R}\le C\|(g,d)\|_{D\times\R}.$$
\end{lemma}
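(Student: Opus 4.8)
The plan is to deduce the invertibility of $Q_e$ for each fixed $e$ from the Fredholm structure of the linearization $H_e$ supplied by Lemma \ref{lemmaH}, and then to obtain the uniform bound and the continuity of $e\mapsto Q_e^{-1}$ by a contradiction-and-compactness argument resting on the uniform bound and convergence property of $M_e^{-1}$ (Lemma \ref{M}), the continuity of $U_e$ and $c_e$ (Theorem \ref{th1}), and the uniform-in-$e$ exponential decay of the fronts and of their derivatives (Lemma \ref{ASY}).

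First I would rewrite the equation $Q_e(\tilde v,\tilde\vartheta)=(g,d)$. Since $M_e:D\to L^2(\R\times\T^N)$ is a bounded bijection and $H_e(w)=M_e(w)+(f_u(y,U_e)+\beta)w$, the first component of $Q_e(\tilde v,\tilde\vartheta)$ equals $M_e^{-1}\big(H_e(\tilde v)+\tilde\vartheta\,\partial_\xi U_e\big)$, so, after applying $M_e$ to that component, $Q_e(\tilde v,\tilde\vartheta)=(g,d)$ becomes the system
$$H_e(\tilde v)+\tilde\vartheta\,\partial_\xi U_e=M_e g\ \text{ in }L^2(\R\times\T^N),\qquad 2\int_{\R^+\times\T^N} U_e\,\tilde v\,dy\,d\xi=d.$$
By Lemma \ref{lemmaH}, $H_e$ is Fredholm of index $0$, $\ker H_e=\mathrm{span}\{\partial_\xi U_e\}$, and $\mathrm{Range}(H_e)=(\ker H_e^*)^\perp$; let $\phi_e\in D$ with $\|\phi_e\|_{L^2(\R\times\T^N)}=1$ span $\ker H_e^*$. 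Then the first equation is solvable if and only if
$$\tilde\vartheta=\frac{\int_{\R\times\T^N} M_e g\,\phi_e\,dy\,d\xi}{\int_{\R\times\T^N}\partial_\xi U_e\,\phi_e\,dy\,d\xi},$$
which is well defined because the denominator is nonzero: this is exactly the statement that $0$ is an \emph{algebraically} simple eigenvalue of $H_e$ (equivalently $\partial_\xi U_e\notin\mathrm{Range}(H_e)$), given by Lemma \ref{lemmaH}. For that $\tilde\vartheta$, the solutions of the first equation form the line $w+\lambda\,\partial_\xi U_e$, $\lambda\in\R$, with $w$ a particular solution, and the scalar constraint selects a unique $\lambda$ because $\int_{\R^+\times\T^N} U_e\,\partial_\xi U_e\,dy\,d\xi=-\tfrac12\int_{\T^N} U_e^2(0,y)\,dy<0$ (integrate in $\xi$, use $U_e(+\infty,\cdot)=0$ from Definition \ref{PF} and $U_e>0$). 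Hence $Q_e:D\times\R\to D\times\R$ is a bounded bijection, and by the bounded inverse theorem $Q_e^{-1}$ is bounded for every fixed $e$.

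For the uniform bound and the continuity I would argue by contradiction. If the uniform bound failed, there would be $e_n\in\mathbb{S}^{N-1}$ and $(\tilde v_n,\tilde\vartheta_n)\in D\times\R$ with $\|(\tilde v_n,\tilde\vartheta_n)\|_{L^2(\R\times\T^N)\times\R}=1$ and $(g_n,d_n):=Q_{e_n}(\tilde v_n,\tilde\vartheta_n)\to 0$ in $D\times\R$. From $\tilde v_n=g_n-M_{e_n}^{-1}\big(\tilde\vartheta_n\partial_\xi U_{e_n}+(f_u(y,U_{e_n})+\beta)\tilde v_n\big)$, the bound $\|M_{e_n}g_n\|_{L^2(\R\times\T^N)}\le C\|g_n\|_D\to0$, the uniform bound of $M_{e_n}^{-1}$ (Lemma \ref{M}), and the uniform bounds on $U_{e_n}$, $\partial_\xi U_{e_n}$ (Lemma \ref{ASY} and parabolic estimates), one gets that $(\tilde v_n)$ is bounded in $H^1(\R\times\T^N)$. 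Moreover, a Caccioppoli-type estimate on $\{|\xi|\ge R\}$, applied to the reformulation $H_{e_n}(\tilde v_n)+\tilde\vartheta_n\partial_\xi U_{e_n}=M_{e_n}g_n$ and using that $-f_u(y,U_{e_n})\ge\gamma$ there (see \eqref{F2}), that the quadratic form of the principal part is nonnegative, and that the inhomogeneity has $L^2$-tails tending to $0$ uniformly in $n$ (from $\|M_{e_n}g_n\|_{L^2}\to0$ and the uniform exponential decay of $\partial_\xi U_{e_n}$), shows that $\int_{\{|\xi|\ge R\}\times\T^N}\tilde v_n^2\,dy\,d\xi\to0$ as $R\to+\infty$ uniformly in $n$. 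Passing to a subsequence, $e_n\to e$, $\tilde\vartheta_n\to\tilde\vartheta$, $U_{e_n}\to U_e$ and $\partial_\xi U_{e_n}\to\partial_\xi U_e$ in $L^2(\R\times\T^N)$ (Theorem \ref{th1} and Lemma \ref{ASY}), and, combining local compactness with the uniform tightness, $\tilde v_n\to\tilde v$ strongly in $L^2(\R\times\T^N)$, so $\|(\tilde v,\tilde\vartheta)\|_{L^2(\R\times\T^N)\times\R}=1$. Consequently $\tilde\vartheta_n\partial_\xi U_{e_n}+(f_u(y,U_{e_n})+\beta)\tilde v_n\to\tilde\vartheta\,\partial_\xi U_e+(f_u(y,U_e)+\beta)\tilde v$ in $L^2(\R\times\T^N)$, so by the convergence statement in Lemma \ref{M} the corresponding $M_{e_n}^{-1}$ images converge in $H^1(\R\times\T^N)$, giving $\tilde v+M_e^{-1}\big(\tilde\vartheta\,\partial_\xi U_e+(f_u(y,U_e)+\beta)\tilde v\big)=0$; and $2\int_{\R^+\times\T^N}U_e\tilde v\,dy\,d\xi=\lim 2\int_{\R^+\times\T^N}U_{e_n}\tilde v_n\,dy\,d\xi=\lim d_n=0$, the limit passing through because of the exponential decay of $U_{e_n}$. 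Thus $Q_e(\tilde v,\tilde\vartheta)=(0,0)$ with $(\tilde v,\tilde\vartheta)\ne0$, contradicting the injectivity of $Q_e$ proved above. The convergence $Q_{e_n}^{-1}(g_n,d_n)\to Q_e^{-1}(g,d)$ in $L^2(\R\times\T^N)\times\R$ follows from the same scheme applied to $(\tilde v_n,\tilde\vartheta_n):=Q_{e_n}^{-1}(g_n,d_n)$: it is bounded in $L^2(\R\times\T^N)\times\R$ by the uniform bound just established and tight by the same Caccioppoli estimate (here $M_{e_n}g_n\to M_e g$ in $L^2$, again providing uniformly small tails); any subsequential limit solves $Q_e(\tilde v,\tilde\vartheta)=(g,d)$ hence equals $Q_e^{-1}(g,d)$, and the whole sequence converges.

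The step I expect to be the main obstacle is this passage to the limit along the sequences. The inclusion $D\hookrightarrow L^2(\R\times\T^N)$ is not compact on the unbounded $\xi$-line, and the principal part of the operator is only degenerate elliptic, so a weak limit could a priori lose $L^2$-mass at $\xi=\pm\infty$ and $(\tilde v,\tilde\vartheta)$ could vanish, which would destroy the contradiction. This is overcome by the uniform stability \eqref{F2} — which makes $H_e$ coercive near the stable states $0$ and $1$ with a rate independent of $e$ — together with the uniform-in-$e$ exponential decay of the fronts from Lemma \ref{ASY}: they furnish the uniform tightness that upgrades weak $L^2$-convergence to strong $L^2$-convergence and keeps the limit nonzero. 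The degeneracy of the operator is not a real obstruction, since all the compactness is routed through the $L^2\to H^1$ a priori estimate and convergence property of $M_e^{-1}$ in Lemma \ref{M} — the regularity gain actually available for this operator — rather than through full elliptic $H^2$-estimates.
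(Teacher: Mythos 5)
Your proposal is correct, but it follows a genuinely different organization than the paper's proof, so let me compare the two. For the invertibility of $Q_e$ the paper simply says to follow the proof of \cite[Lemma 3.3]{DHZ} step by step, using the kernel information of Lemma \ref{lemmaH}; you instead carry out that Fredholm/Lyapunov--Schmidt argument explicitly, letting the solvability condition against the adjoint eigenfunction fix $\tilde\vartheta$ (this is where algebraic simplicity, i.e.\ $\partial_\xi U_e\notin\mathrm{Range}(H_e)$, enters) and the normalization constraint fix the component along $\ker H_e$ because $\int_{\R^+\times\T^N}U_e\,\partial_\xi U_e\,dyd\xi=-\tfrac12\int_{\T^N}U_e^2(0,y)\,dy\neq0$; this is in substance what the cited lemma does, so nothing is lost there. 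The real divergence is in the second half: the paper first shows, for fixed $e$, that $Q_e^{-1}(g_n,d_n)\to(0,0)$ when $(g_n,d_n)\to 0$, deduced abstractly from the closed range and trivial kernel (i.e.\ the bounded inverse), then asserts the continuity $Q_{e_n}^{-1}(g,d)\to Q_e^{-1}(g,d)$ for fixed data via Lemma \ref{M}, combines the two by a triangle inequality, and only afterwards obtains the uniform constant by a contradiction argument exploiting this continuity in $e$. You reverse the order: you prove the uniform bound first by normalizing a hypothetical unbounded sequence and running a compactness argument (uniform $H^1$ bound from Lemma \ref{M}, uniform tightness in $\xi$ from the uniform stability \eqref{F2} together with the uniform exponential decay of the fronts and their derivatives, strong $L^2$ limit of norm one contradicting injectivity), and then you get the convergence statement from the same scheme. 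This is a legitimate and in fact more self-contained route: it makes explicit the ``no loss of mass at $\xi=\pm\infty$'' mechanism that the paper leaves implicit in its appeal to closed range and to Lemma \ref{M}, at the cost of having to justify the Caccioppoli-type tail estimate (use a cutoff of large width so the gradient cross terms are $O(1/\ell)$ against the uniform $H^1$ bound) and the $L^2(\R\times\T^N)$ convergence of $\partial_\xi U_{e_n}$ and $U_{e_n}$, both of which do follow from Lemma \ref{ASY}, standard parabolic estimates and Theorem \ref{th1} as you indicate. The paper's route is shorter because it leverages the already established properties of $M_e^{-1}$ and the abstract bounded-inverse fact; yours supplies the quantitative compactness on which those assertions ultimately rest.
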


\begin{proof}
The proof of invertibility can just follow the proof of \cite[Lemma 3.3]{DHZ} step by step, by only noticing that the kernel of $H_e$ is generated by $\partial_{\xi} U_e$ from Lemma \ref{lemmaH} and the domain of $Q_e$ is $D\times\R$.

Now, we prove the convergence. Since $Q_e^{-1}(g,d)$ is linear for $(g,d)\in D\times\R$, we first show that $Q_e^{-1}(g_n,d_n)\rightarrow (0,0)$ in $L^2(\R\times\T^N)\times\R$ as $n\rightarrow +\infty$ when $\|g_n\|_D\rightarrow 0$ and $|d_n|\rightarrow 0$ as $n\rightarrow +\infty$. Let $(\tilde{v}_n,\tilde{\vartheta}_n)=Q_e^{-1}(g_n,d_n)$. Since the range of $Q_e$ is closed and the kernel of $Q_e$ is trivial, one has that $(\tilde{v}_n,\tilde{\vartheta}_n)\rightarrow (0,0)$ in $L^2(\R\times\T^N)\times\R$ (actually $\tilde{v}_n\rightarrow 0$ strongly in $L^2(\R\times\mathbb{T}^N)$, weakly in $H^1$). Moreover, by Lemma \ref{M}, one has that $Q_{e_n}^{-1}(g,d)\rightarrow Q_e^{-1}(g,d)$ in $L^2(\R\times\mathbb{T}^N)\times\R$ as $n\rightarrow +\infty$ when $e_n\rightarrow e$ as $n\rightarrow +\infty$ for any $g\in D$ and $d\in\R$. Since $\|Q_{e_n}^{-1}(g_n,d_n)- Q_e^{-1}(g,d)\|_{L^2(\R\times\mathbb{T}^N)\times\R}\le \|Q_{e_n}^{-1}(g_n,d_n)- Q_e^{-1}(g_n,d_n)\|_{L^2(\R\times\mathbb{T}^N)\times\R}+\|Q_{e}^{-1}(g_n,d_n)- Q_e^{-1}(g,d)\|_{L^2(\R\times\mathbb{T}^N)\times\R}$, one can get the conclusion that $Q_{e_n}^{-1}(g_n,d_n)\rightarrow Q_e^{-1}(g,d)$ in $L^2(\R\times\mathbb{T}^N)\times\R$ as $n\rightarrow +\infty$, when $e_n\rightarrow e$, $\|g_n-g\|_{D}\rightarrow 0$ and $|d_n-d|\rightarrow 0$ as $n\rightarrow +\infty$.

For every $e\in\mathbb{S}^{N-1}$ and any $g\in D$, $d\in\R$, there is $\delta_e>0$ small enough such that
$$\|\frac{\delta_e}{\|(g,d)\|_{D\times\R}}Q_e^{-1}(g,d)\|_{L^2(\R\times\T^N)\times\R}\le 1,$$
since $Q_e^{-1}(g_n,d_n)\rightarrow (0,0)$ in $L^2(\R\times\T^N)\times\R$ as $n\rightarrow +\infty$ when $\|g_n\|_D\rightarrow 0$ and $|d_n|\rightarrow 0$ as $n\rightarrow +\infty$. That implies that for every $e\in\mathbb{S}^{N-1}$, there is $\delta_e>0$ such that
\be\label{eq2.20}
\|Q_e^{-1}(g,d)\|_{L^2(\R\times\T^N)\times\R}\le \frac{1}{\delta_e}\|(g,d)\|_{D\times\R}.
\ee
We now show that $1/{\delta_e}$ is uniformly bounded for $e\in\mathbb{S}^{N-1}$. Assume by contradiction that there is a sequence $(e_n)_{n\in\mathbb{N}}\subset \mathbb{S}^{N-1}$ such that
$$\|\frac{1}{\|(g,d)\|_{D\times\R}}Q_{e_n}^{-1}(g,d)\|_{L^2(\R\times\T^N)\times\R}\rightarrow +\infty, \text{ as $n\rightarrow +\infty$}.$$
There is $e_0\in \mathbb{S}^{N-1}$ such that $e_n\rightarrow e_0$, up to a subsequence, as $n\rightarrow +\infty$. Then, up to a subsequence,
$Q_{e_n}^{-1}(g,d)\rightarrow Q_{e_0}^{-1}(g,d)$ in $L^2(\R\times\T^N)\times\R$ as $n\rightarrow +\infty$.
Thus, one has that
$$\|\frac{1}{\|(g,d)\|_{D\times\R}}Q_{e_0}^{-1}(g,d)\|_{L^2(\R\times\T^N)\times\R}= +\infty,$$
which contradicts \eqref{eq2.20}. Therefore, for all $e\in\mathbb{S}^{N-1}$, $g\in L^2(\R\times\mathbb{T}^N)$ and $d\in\R$, there is $C>0$ such that
$$\|Q_e^{-1}(g,d)\|_{L^2(\R\times\mathbb{T}^N)\times\R}\le C\|(g,d)\|_{D\times\R}.$$

The proof is thereby complete.
\end{proof}
\vskip 0.3cm

Given the previous lemmas, we are now ready to prove Theorem \ref{th2}.
\vskip 0.3cm

\begin{proof}[Proof of Theorem \ref{th2}]
{\it Step 1: first order differentiability.} For every $e\in\mathbb{S}^{N-1}$, normalize $U_e$ by
\be\label{eq2.19}
\int_{\R^+\times\mathbb{T}^{N}} U_e^2(\xi,y)dyd\xi=1.
\ee
For any $b\in \R^N\setminus\{0\}$, let
$$U_b=U_{\frac{b}{|b|}}\text{ and } c_b=c_{\frac{b}{|b|}}.$$
Then, by Theorem \ref{th1}, $(U_b,c_b)$ is well defined and continuous with respect to $b\in\R^N\setminus\{0\}$. Furthermore, $U_b$ and $c_b$ satisfy
\be\label{eq2.21}
c_{b} \partial_{\xi} U_{b} +\partial_{\xi\xi} U_{b} +2\nabla_y \partial_{\xi} U_{b}\cdot \frac{b}{|b|}+\Delta_y U_{b} +f(y,U_{b})=0.
\ee

Now fix arbitrary $e\in\mathbb{S}^{N-1}$. For any $h\in\R^N$ such that $e+h\in \R^N\setminus\{0\}$, one has that $U_{e+h}$ and $c_{e+h}$ satisfy \eqref{eq2.21} with $b$ replaced by $e+h$. Let $\tilde{U}_h=U_{e+h}-U_e\in D$, $\tilde{c}_h=c_{e+h}-c_e\in\R$ and $\tilde{h}=(e+h)/|e+h|-e$. Notice that $\|(\tilde{U}_h,\tilde{c}_h)\|_{L^2(\R\times\mathbb{T}^{N})\times\R}\rightarrow 0$ and $\tilde{h}=-(e\cdot h)e+h+o(|h|)$ as $|h|\rightarrow 0$. By the normalization \eqref{eq2.19}, $(U_{e+h},c_{e+h})$ satisfying \eqref{eq2.21} with $b=e+h$ and $(U_e,c_e)$ satisfying \eqref{Ue}, one can compute that
$$G_e(\tilde{U}_h,\tilde{c}_h,\tilde{h})=(0,0).$$
Recalling that $G_e(0,0,0)=(0,0)$ and by Lemma \ref{lemma2.10} and the definition of Fr\'{e}chet differentiability, it follows that
$$(0,0)=G_e(\tilde{U}_h,\tilde{c}_h,\tilde{h})-G_e(0,0,0)=\partial_{(v,\vartheta)} G_e(0,0,0)(\tilde{U}_h,\tilde{c}_h)+\partial_{\eta} G_e(0,0,0)\tilde{h}+\omega_1(\tilde{h})+\omega_2(\tilde{U}_h,\tilde{c}_h),$$
where $\omega_1(\tilde{h})=o(|h|)$ and $\omega_2(\tilde{U}_h,\tilde{c}_h)=o(\|(\tilde{U}_h,\tilde{c}_h)\|_{L^2(\R\times\mathbb{T}^{N})\times\R})$  as $|h|\rightarrow 0$. Since  $\partial_{(v,\vartheta)} G_e(0,0,0)$ has the same form as $Q_e$ and $\tilde{U}_h\in D$, $\tilde{c}_h\in \R$, one can replace $\partial_{(v,\vartheta)} G_e(0,0,0)$ by $Q_e$ in the above equation. Thus, it follows from Lemma \ref{lemma2.11} that
\begin{align}
(\tilde{U}_h,\tilde{c}_h)+Q_e^{-1}(\omega_2(\tilde{U}_h,\tilde{c}_h))=& -Q_e^{-1}(\partial_{\eta} G_e(0,0,0)\tilde{h})-Q_e^{-1}(\omega_1(\tilde{h}))\nonumber\\
=& -Q_e^{-1}(M_e^{-1}(2\nabla_y\partial_{\xi} U_e\cdot \tilde{h}),0)-Q_e^{-1}(\omega_1(\tilde{h})).\label{2.21}
\end{align}
Then, one has that
$$\frac{1}{|h|}\|(\tilde{U}_h,\tilde{c}_h)+Q_e^{-1}(\omega_2(\tilde{U}_h,\tilde{c}_h))\|_{L^2(\R\times\mathbb{T}^N)\times\R}
= \frac{1}{|h|}\|Q_e^{-1}(M_e^{-1}(2\nabla_y\partial_{\xi} U_e\cdot \tilde{h}),0)+Q_e^{-1}(\omega_1(\tilde{h}))\|_{L^2(\R\times\mathbb{T}^N)\times\R}.$$
By Lemma \ref{M}, Lemma \ref{lemma2.11} and $\omega_1(\tilde{h})=o(|h|)$ as $|h|\rightarrow 0$, the right hand is bounded as $|h|\rightarrow 0$. Moreover, since $\omega_2(\tilde{U}_h,\tilde{c}_h)=o(\|(\tilde{U}_h,\tilde{c}_h)\|_{L^2(\R\times\mathbb{T}^{N})\times\R})$ as $|h|\rightarrow 0$, one has that
\begin{align*}
\|(\tilde{U}_h,\tilde{c}_h)+Q_e^{-1}(\omega_2(\tilde{U}_h,\tilde{c}_h))\|_{L^2(\R\times\mathbb{T}^N)\times\R}\ge& \|(\tilde{U}_h,\tilde{c}_h)\|_{L^2(\R\times\mathbb{T}^N)\times\R}-\|Q_e^{-1}(\omega_2(\tilde{U}_h,\tilde{c}_h))\|_{L^2(\R\times\mathbb{T}^N)\times\R}\\
\ge& \frac{1}{2}\|(\tilde{U}_h,\tilde{c}_h)\|_{L^2(\R\times\mathbb{T}^N)\times\R},
\end{align*}
as $|h|\rightarrow 0$. Then, $\|(\tilde{U}_h,\tilde{c}_h)\|_{L^2(\R\times\mathbb{T}^N)\times\R}/|h|$ is bounded as $|h|\rightarrow 0$. It implies that $Q_e^{-1}(\omega_2(\tilde{U}_h,\tilde{c}_h))=o(|h|)$ as $|h|\rightarrow 0$. Therefore, by \eqref{2.21} and recalling that $\tilde{h}=-(e\cdot h)e+h+o(|h|)$ as $|h|\rightarrow 0$, one gets that
\begin{align*}
(U_{e+h}-U_e,c_{e+h}-c_e)&=(\tilde{U}_h,\tilde{c}_h)= -Q_e^{-1}(M_e^{-1}(2\nabla_y\partial_{\xi} U_e\cdot \tilde{h}),0)+o(|h|)\\
&=(e\cdot h)Q_e^{-1}(M_e^{-1}(2\nabla_y\partial_{\xi} U_e\cdot e),0) - Q_e^{-1}(M_e^{-1}(2\nabla_y\partial_{\xi} U_e\cdot h),0)+o(|h|).
\end{align*}
Thus, by the arbitrariness of $e$ in $\mathbb{S}^{N-1}$, one can conclude that $(U_b,c_b)$ is Fr\'{e}chet differentiable everywhere at $e\in\mathbb{S}^{N-1}$. Denote the derivative by $(U_e',c_e')$, that is, for any $h\in\R^N$
\be\label{U'}
(U_e'(h),c_e'(h))=(e\cdot h)Q_e^{-1}(M_e^{-1}(2\nabla_y\partial_{\xi} U_e\cdot e),0) - Q_e^{-1}(M_e^{-1}(2\nabla_y\partial_{\xi} U_e\cdot h),0),
\ee
where $(U_e',c_e'):\R^N\rightarrow L^2(\R\times\mathbb{T}^N)\times \R$.
By Lemma \ref{M}, Lemma \ref{lemma2.11} and the continuity of $U_e$ with respect to $e\in\mathbb{S}^{N-1}$, one has that for any $h\in\R^N$, $(U_e'(h),c_e'(h))$ is continuous with respect to $e\in\mathbb{S}^{N-1}$ (one can actually prove that $(U_{e_n}'(h),c_{e_n}'(h))\rightarrow (U'_e(h),c'_e(h))$ as $n\rightarrow +\infty$ when $e_n\rightarrow e$ as $n\rightarrow +\infty$). Since $U_e(\cdot,\cdot)\in C^{2,2}(\R\times\R^N)$, it implies that $U_e'(h)(\cdot,\cdot)$ is in $C^{2,2}(\R\times\R^N)$, for every $h\in \R^N$.

Then, for any $b\in\R^N\setminus\{0\}$ and any direction $h\in\R^N$, one gets that
\begin{align*}
(U_{b+h}-U_b,c_{b+h}-c_b)&=\left(U_{\frac{b+h}{|b+h|}}-U_{\frac{b}{|b|}},c_{\frac{b+h}{|b+h|}}-c_{\frac{b}{|b|}}\right)\\
&=\left(U'_{\frac{b}{|b|}}(\frac{h}{|b|}-\frac{b\cdot h}{|b|^3}b),c'_{\frac{b}{|b|}}(\frac{h}{|b|}-\frac{b\cdot h}{|b|^3}b)\right)+o(|h|).
\end{align*}
This implies that $(U_b,c_b)$ is continuously Fr\'{e}chet differentiable at any $b\in\R^N\setminus\{0\}$.

{\it Step 2: second order differentiability.} By Step 1, $(U'_b,c'_b)$ is well defined and continuous with respect to $b\in\R^N\setminus\{0\}$. Fix arbitrary $e\in\mathbb{S}^{N-1}$ and $h\in\R^{N}$. From the definition of $(U_b,c_b)$, one has that $(U_b,c_b)$ satisfies \eqref{eq2.21}.
Differentiating \eqref{eq2.21} at $b$ on the direction $h\in\R^N$, one gets that
\begin{align}
&c'_b(h)\partial_{\xi} U_b +c_b\partial_{\xi} U'_b(h) +\partial_{\xi\xi} U'_b(h) +2\nabla_y\partial_{\xi} U_b\cdot (\frac{h}{|b|}-\frac{b\cdot h}{|b|^3}b)\nonumber\\
&+2\nabla_y\partial_{\xi} U'_e(h)\cdot \frac{b}{|b|} +\Delta_y U'_b(h) +f_u(y,U_b)U'_b(h)=0\label{2.23}
\end{align}

For any $e\in\mathbb{S}^{N-1}$, $h\in\R^N$, $v_1$, $v_2\in H^2(\R\times\T^N)$, $\vartheta_1$, $\vartheta_2\in \R$ and $\eta\in\R^N$, define
\begin{align*}
K'_e(v_1,&\vartheta_1,v_2,\vartheta_2,\eta):=c'_e(h)\partial_{\xi}v_1+\vartheta_2\partial_{\xi} (U_e+v_1)+\vartheta_1\partial_{\xi} (U'_e(h)+v_2) \\
&+2\nabla_y\partial_{\xi} U_e\cdot [\frac{h}{|e+\eta|}-h-\frac{(e+\eta)\cdot h}{|e+\eta|^3}(e+\eta)+(e\cdot h)e]\\
&+2\nabla_y\partial_{\xi} v_1\cdot [\frac{h}{|e+\eta|}-\frac{(e+\eta)\cdot h}{|e+\eta|^3}(e+\eta)]+2\nabla_y\partial_{\xi} U'_e(h)\cdot (\frac{e+\eta}{|e+\eta|}-e)\\
&+2\nabla_y\partial_{\xi} v_2\cdot (\frac{e+\eta}{|e+\eta|}-e) +\beta v_2 +f_u(y,U_e+v_1)(U'_e(h)+v_2)-f_u (y,U_e)U'_e(h)
\end{align*}
and
\begin{align*}
&G'_e(v_1,\vartheta_1,v_2,\vartheta_2,\eta)\\
&:=\left(v_2+M_e^{-1}(K'_e(v_1,\vartheta_1,v_2,\vartheta_2,\eta)),2\int_{\R^+\times\T^N} \left[U'_e(h)(\xi,y)v_1(\xi,y)+v_2(U_e(\xi,y)+v_1)\right]dyd\xi\right).
\end{align*}
Following the arguments of Lemma \ref{lemma2.10}, one has that for every $e\in\mathbb{S}^{N-1}$, the function $G'_e: H^2(\R\times\T^N)\times\R\times H^2(\R\times\T^N)\times\R\times\R^N\rightarrow D\times\R$ is continuous and it is continuously Fr\'{e}chet differentiable with respect to $(v_1,\vartheta_1)$ and $(v_2,\vartheta_2)$ respectively, and doubly continuously Fr\'{e}chet differentiable with respect to $\eta$.  One can compute that the function $G'_e(v_1,v_2,\vartheta_1,\vartheta_2,\eta)$ is with derivatives
$$\partial_{\eta} G'_e(v_1,\vartheta_1,v_2,\vartheta_2,\eta)\tilde{\eta}=\left(
M_e^{-1}(J_1),0\right),$$
$$
\partial_{(v_1,\vartheta_1)} G'_e(v_1,\vartheta_1,v_2,\vartheta_2,\eta)(\tilde{v}_1,\tilde{\vartheta}_1)=\left(
M_e^{-1}(J_2),
2\int_{\R^+\times\T^N} \left(U'_e(h)\tilde{v}_1+v_2\tilde{v}_1\right)dyd\xi\right),
$$
$$
\partial_{(v_2,\vartheta_2)} G'_e(v_1,\vartheta_1,v_2,\vartheta_2,\eta)(\tilde{v}_2,\tilde{\vartheta}_2)
=\left(
\tilde{v}_2+M_e^{-1}(J_3),
2\int_{\R^+\times\T^N} \tilde{v}_2(U_e+v_1)dyd\xi\right),
$$
where
$$
J_1=2\nabla_y\partial_{\xi} (U'_e(h)+v_2)\cdot \eta_1 +2\nabla_y\partial_{\xi} (U_e+v_1)\cdot \eta_2,
$$
$$J_2=(c'_e(h)+\vartheta_2)\partial_{\xi} \tilde{v}_1+2\nabla_y\partial_{\xi} \tilde{v}_1\cdot [\frac{h}{|e+\eta|}-\frac{(e+\eta)\cdot h}{|e+\eta|^3}(e+\eta)] +f_{uu}(y,U_e+v_1)\tilde{v}_1(U'_e(h)+v_2),$$
$$
J_3=\vartheta_1\partial_{\xi} \tilde{v}_2+2\nabla_y\partial_{\xi} \tilde{v}_2\cdot (\frac{e+\eta}{|e+\eta|}-e) +\beta \tilde{v}_2 +f_u(y,U_e+v_1)\tilde{v}_2 +\tilde{\vartheta}_2 \partial_{\xi} (U_e+v_1),
$$
with
$$\eta_1=\frac{\tilde{\eta}}{|e+\eta|}-\frac{(e+\eta)\cdot \tilde{\eta}}{|e+\eta|^3}(e+\eta),$$
$$\eta_2=-\frac{(e+\eta)\cdot \tilde{\eta}}{|e+\eta|^3}h-\frac{(e+\eta)\cdot h}{|e+\eta|^3}\tilde{\eta}-[\frac{\tilde{\eta}\cdot h}{|e+\eta|^3}-3(e+\eta)\cdot h\frac{(e+\eta)\cdot \tilde{\eta}}{|e+\eta|^4}](e+\eta),$$
for any $(v_1,\vartheta_1,v_2,\vartheta_2,\eta)\in H^2(\R\times\T^N)\times\R\times H^2(\R\times\T^N)\times\R\times\R^N$, $\tilde{\eta}\in\R^N$, $(\tilde{v}_1,\tilde{\theta}_1)\in H^2(\R\times\mathbb{T}^N)\times\R$ and $(\tilde{v}_2,\tilde{\theta}_2)\in H^2(\R\times\mathbb{T}^N)\times\R$. One also has that
\be
\baa{l}
\partial_{(v_2,\vartheta_2)} G'_e(0,0,0,0,0)(\tilde{v}_2,\tilde{\vartheta}_2)\\
=\left(
\tilde{v}_2+M_e^{-1}(\beta \tilde{v}_2 +f_u(y,U_e)\tilde{v}_2 +\tilde{\vartheta}_2 \partial_{\xi} U_e),
2\int_{\R^+\times\T^N} U_e(\xi,y)\tilde{v}_2(\xi,y)dyd\xi
\right).\eaa
\ee
Notice that $\partial_{(v_2,\vartheta_2)} G'_e(0,0,0,0,0)$ has the same form as $Q_e$.

For any $\rho\in\R^N$ such that $e+\rho\in\R^N\setminus\{0\}$, let $\tilde{U}'_{\rho}(h)=U'_{e+{\rho}}(h)-U'_e(h)\in D$, $\tilde{c}'_{\rho}(h)=c'_{e+\rho}(h)-c'_e(h)\in\R$, $\tilde{U}_{\rho}=U_{e+\rho}-U_e\in D$, and $\tilde{c}_{\rho}=c_{e+\rho}-c_e\in\R$. Then, from \eqref{2.23}, it follows that $G'_e(\tilde{U}_{\rho},\tilde{c}_{\rho},\tilde{U}'_{\rho}(h),\tilde{c}'_{\rho}(h),\rho)=0$. By $G(0,0,0,0,0)=(0,0)$, it follows that
\begin{align*}
(0,0)=&G(\tilde{U}_{\rho},\tilde{c}_{\rho},\tilde{U}'_{\rho}(h),\tilde{c}'_{\rho}(h),\rho)-G(0,0,0,0,0)\\
=&\partial_{(v_1,\vartheta_1)} G(0,0,0,0,0)(\tilde{U}_{\rho},\tilde{c}_{\rho})+\partial_{(v_2,\vartheta_2)} G(0,0,0,0,0)(\tilde{U}'_{\rho}(h),\tilde{c}'_{\rho}(h))\\
&+\partial_{\eta} G(0,0,0,0,0)\rho+\omega_1(\rho)+\omega_2(\tilde{U}_{\rho},\tilde{c}_{\rho})+\omega_3(\tilde{U}'_{\rho}(h),\tilde{c}'_{\rho}(h)),
\end{align*}
where $\omega_1(\rho)=o(|\rho|)$, $\omega_2(\tilde{U}_{\rho},\tilde{c}_{\rho})=o(\|(\tilde{U}_{\rho},\tilde{c}_{\rho})\|_{L^2(\R\times\mathbb{T}^{N})\times\R})$ (remember that $\|(\tilde{U}_{\rho},\tilde{c}_{\rho})\|_{L^2(\R\times\mathbb{T}^{N})\times\R}=O(|\rho|)$ from arguments of Step 1) and $\omega_2(\tilde{U}'_{\rho}(h),\tilde{c}'_{\rho}(h))=o(\|(\tilde{U}'_{\rho}(h),\tilde{c}'_{\rho}(h))\|_{L^2(\R\times\mathbb{T}^{N})\times\R})$ as $|\rho|\rightarrow 0$.  Since  $\partial_{(v_2,\vartheta_2)} G(0,0,0,0,0)$ has the same form as $Q_e$ and $\tilde{U}'_h(h)\in D$, $\tilde{c}'_h(h)\in \R$, one can replace $\partial_{(v_2,\vartheta_2)} G(0,0,0,0,0)$ by $Q_e$ in the above equation. Thus, it follows from Lemma \ref{lemma2.11} that
\begin{align}
&(\tilde{U}'_{\rho}(h),\tilde{c}'_{\rho}(h))+Q_e^{-1}(\omega_3(\tilde{U}'_{\rho}(h),\tilde{c}'_{\rho}(h)))\nonumber\\
=& -Q_e^{-1}(\partial_{\eta} G(0,0,0,0,0)\rho)-Q_e^{-1}(\omega_1(\rho)+\omega_2(\tilde{U}_{\rho},\tilde{c}_{\rho}))\nonumber\\
=& -Q_e^{-1}(M_e^{-1}(2\nabla_y\partial_{\xi} U'_e(h)\cdot \rho_1+2\nabla_y\partial_{\xi} U_e\cdot \rho_2),0)-Q_e^{-1}(\omega_1(\rho)+\omega_2(\tilde{U}_{\rho},\tilde{c}_{\rho})),\label{eq2.26}
\end{align}
with
\be\label{rho1rho2}
\rho_1=\rho-(e\cdot \rho)e,\quad \rho_2=-(e\cdot \rho)h-(e\cdot h)\rho-[\rho\cdot h-3(e\cdot h)(e\cdot \rho)]e.
\ee
Then, one has that
\begin{align*}
&\frac{1}{|\rho|}\|(\tilde{U}'_{\rho}(h),\tilde{c}'_{\rho}(h))+Q_e^{-1}(\omega_3(\tilde{U}'_{\rho}(h),\tilde{c}'_{\rho}(h)))\|_{L^2(\R\times\mathbb{T}^N)\times\R}\\
=& \frac{1}{|\rho|}\|Q_e^{-1}(M_e^{-1}(2\nabla_y\partial_{\xi} U'_e(h)\cdot \rho_1+2\nabla_y\partial_{\xi} U_e\cdot \rho_2),0)+Q_e^{-1}(\omega_1(\rho)+\omega_2(\tilde{U}_{\rho},\tilde{c}_{\rho}))\|_{L^2(\R\times\mathbb{T}^N)\times\R}.
\end{align*}
Since $\omega_1(\rho)+\omega_2(\tilde{U}_{\rho},\tilde{c}_{\rho})=o(|\rho|)$ as $|\rho|\rightarrow 0$, the right hand is bounded as $|\rho|\rightarrow 0$. Moreover, since $\omega_3(\tilde{U}'_{\rho}(h),\tilde{c}'_{\rho}(h)=o(\|(\tilde{U}'_{\rho}(h),\tilde{c}'_{\rho}(h))\|_{L^2(\R\times\mathbb{T}^{N})\times\R})$ as $|\rho|\rightarrow 0$, one has that
\begin{align*}
&\|(\tilde{U}'_{\rho}(h),\tilde{c}'_{\rho}(h))+Q_e^{-1}(\omega_3(\tilde{U}'_{\rho}(h),\tilde{c}'_{\rho}(h)))\|_{L^2(\R\times\mathbb{T}^N)\times\R}\\
\ge& \|(\tilde{U}'_{\rho}(h),\tilde{c}'_{\rho}(h))\|_{L^2(\R\times\mathbb{T}^N)\times\R}-\|Q_e^{-1}(\omega_3(\tilde{U}'_{\rho}(h),\tilde{c}'_{\rho}(h)))\|_{L^2(\R\times\mathbb{T}^N)\times\R}\\
\ge& \frac{1}{2}\|(\tilde{U}'_{\rho}(h),\tilde{c}'_{\rho}(h))\|_{L^2(\R\times\mathbb{T}^N)\times\R},
\end{align*}
as $|\rho|\rightarrow 0$. Then, $\|(\tilde{U}'_{\rho}(h),\tilde{c}'_{\rho}(h))\|_{L^2(\R\times\mathbb{T}^N)\times\R}/|\rho|$ is bounded as $|\rho|\rightarrow 0$. Thus, by Lemma \ref{lemma2.11}, one has that $Q_e^{-1}(\omega_3(\tilde{U}'_{\rho}(h),\tilde{c}'_{\rho}(h)))=o(|\rho|)$ as $|\rho|\rightarrow 0$. Therefore, by \eqref{eq2.26}, one gets that
\begin{eqnarray*}
\begin{aligned}
(U'_{e+\rho}(h)&-U'_e(h),c'_{e+\rho}(h)-c'_e(h))=(\tilde{U}'_{\rho}(h),\tilde{c}'_{\rho}(h))\\
=& -Q_e^{-1}(M_e^{-1}(2\nabla_y\partial_{\xi} U'_e(h)\cdot \rho_1+2\nabla_y\partial_{\xi} U_e\cdot \rho_2),0)+o(|\rho|)
\end{aligned}
\end{eqnarray*}
Thus, by the arbitrariness of $e\in\mathbb{S}^{N-1}$, one can conclude that $(U'_b(h),c'_b(h))$ is Fr\'{e}chet differentiable at $e\in\mathbb{S}^{N-1}$ for any $h\in\R^N$. Denote the derivative by $(U''_e(h),c''_e(h))$, that is, for any $\rho\in\R^N$
\begin{align}
(U''_e(h)(\rho),c''_e(h)(\rho))
=-Q_e^{-1}(M_e^{-1}(2\nabla_y\partial_{\xi} U'_e(h)\cdot \rho_1+2\nabla_y\partial_{\xi} U_e\cdot \rho_2),0),\label{U''}
\end{align}
where $\rho_1$, $\rho_2$ are defined in \eqref{rho1rho2}.
By Lemma \ref{M}, Lemma \ref{lemma2.11} and the continuity of $U'_e(h)$ with respect to $e\in\mathbb{S}^{N-1}$, one has that for any $h\in\R^N$ and $\rho\in\R^N$, $(U''_e(h)(\rho),c''_e(h)(\rho))$ is continuous with respect to $e\in\mathbb{S}^{N-1}$. Since $U'_e(h)(\cdot,\cdot)\in C^{2,2}(\R\times\R^N)$, it implies that $U''_e(h)(\rho)(\cdot,\cdot)$ is in $C^{2,2}(\R\times\R^N)$.

Similarly as in Step 1, one can also get that $U'_b(h)$ is continuously Fr\'{e}chet differentiable at any $b\in\R^N\setminus\{0\}$. The proof is thereby complete.
\end{proof}
\vskip 0.3cm

From the arguments above, we know that for every $e\in\mathbb{S}^{N-1}$,  $U'_e$, $\partial_{\xi} U'_e$, $\partial_{y_i} U'_e$ ($i=1,\ \cdots,\ N$) and $U''_e$ are bounded linear operators. We emphasize the meaning of the Fr\'{e}chet derivatives at $e\in\mathbb{S}^{N-1}$ by examples that $U'_e(h)(\cdot,\cdot)$ is the derivative of $U_b(\cdot,\cdot)$ (where $U_b(\cdot,\cdot)$ is defined in \eqref{Ub}) at $e\in\mathbb{S}^{N-1}$ on the direction $h\in\R^N$ and $U''_e(h)(\rho)(\cdot,\cdot)$ is the derivative of $U'_b(h)(\cdot,\cdot)$ at $e\in\mathbb{S}^{N-1}$ on the direction $\rho\in\R^N$. As we mentioned in the proof of Theorem \ref{th2} that $U'_e(h)(\cdot,\cdot)$ is in $C^{2,2}(\R\times\R^N)$, the derivatives of $U'_e(h)(\cdot,\cdot)$ with respect to $\xi$ and $y$ are well defined, denoted by $\partial_{\xi} U'_e(h)$, $\partial_{y_i} U'_e(h)$ ($i=1,\ \cdots,\ N$) for any $h\in\R^N$. By the definition of $U'_e$ and the definition of Fr\'{e}chet differentiability, we know that $\partial_{\xi} U'_e(h)$, $\partial_{y_i} U'_e(h)$ ($i=1,\ \cdots,\ N$) are also the Fr\'{e}chet derivatives of $\partial_{\xi} U_b$ and $\partial_{y_i} U_b$ ($i=1,\ \cdots,\ N$) at $e\in\mathbb{S}^{N-1}$ on the direction $h\in\R^N$. Furthermore, since $U'_e(h)$ is a linear operator with respect to $h$, we can easily get that $U'_e(h)$ is Fr\'{e}chet differentiable with respect to $h$, with the derivative $U'_e(\rho)$ at any $h\in\R^N$ on the direction $\rho\in\R^N$. Then, we denote the norm of the Fr\'{e}chet derivatives by that for every $e\in\mathbb{S}^{N-1}$,
$$\|U'_e\|=\sup_{h\in\R^N} \frac{\|U'_e(h)\|_{L^2(\R\times\mathbb{T}^N)}}{|h|},\ \|\partial_{\xi} U'_e\|=\sup_{h\in\R^N} \frac{\|\partial_{\xi} U'_e(h)\|_{L^2(\R\times\mathbb{T}^N)}}{|h|},$$
and
$$\|\partial _{y_i} U'_e\|=\sup_{h\in\R^N} \frac{\|\partial_{y_i} U'_e(h)\|_{L^2(\R\times\mathbb{T}^N)}}{|h|}\quad (i=1,\ \cdots,\ N),\ \|U''_e\|=\sup_{(h,\rho)\in\R^N\times\R^N} \frac{\|U''_e(h)(\rho)\|_{L^2(\R\times\mathbb{T}^N)}}{|h||\rho|}.$$
Since $U_e$ is continuous with respect to $e\in\mathbb{S}^{N-1}$ and $\mathbb{S}^{N-1}$ is a compact subset of $\R^N$, one has that $\partial_{\xi} U_e$, $\partial_{y_i} U_e$ ($i=1,\ \cdots,\ N$) are also continuous with respect to $e\in\mathbb{S}^{N-1}$ and it follows from (ii) of Definition \ref{PF} that
$$\lim_{\xi\rightarrow\pm \infty} U_e(\xi,y)=0,\ 1,\text{ uniformly for $e\in\mathbb{S}^{N-1}$}.$$
This also implies that $\lim_{\xi\rightarrow \pm\infty} U'_e(h)(\xi,y)=0$ for any $h\in\R^N$, uniformly for $y\in\R^N$, $e\in\mathbb{S}^{N-1}$. Thus, $\|U'_e\|$ is bounded uniformly for $e\in\mathbb{S}^{N-1}$. Similarly, one can get that $\|\partial_{\xi} U'_e\|$, $\|\partial_{y_i} U'_e\|$ ($i=1,\ \cdots,\ N$) and $\|U''_e\|$ are bounded uniformly for $e\in\mathbb{S}^{N-1}$.


\section{Propagating speed of transition fronts}

\noindent
This section is devoted to prove Theorem \ref{th1.3}. It shows that the propagating speed of transition fronts can not be less than the infimum of the speeds of pulsating fronts and can not be larger than the supremum of the speeds of pulsating fronts. As the transition fronts concerned in homogeneous case \cite{H}, the lower bound of the propagating speed of transition fronts is related on how fast the domain in which the solution of the following Cauchy problem \eqref{vR} is close to $1$ extends and the upper bound is related on how fast the domain in which the solution of \eqref{omegaR} is close to $0$ contracts. Thus, in the following section, we prove two key-lemmas about the speed of extension or contraction.


\subsection{Two key-lemmas}
\noindent
In this section, we prove Lemma \ref{lemma3.1} and Lemma \ref{lemma3.2} below. In the sequel, we let $U_e$ be a family of pulsating fronts with normalization
$$\int_{\R^+\times\mathbb{T}^{N-1}} U_e^2(\xi,y)dyd\xi=1, \text{ for every $e\in\mathbb{S}^{N-1}$}.$$
For any $b\in\R^N\setminus\{0\}$, let $U_b$ defined in \eqref{Ub}, that is, $U_b=U_{b/|b|}$. By Theorem \ref{th1} and Theorem \ref{th2}, $U_b$ are continuous and doubly continuously Fr\'{e}chet differentiable at any $e\in \mathbb{S}^{N-1}$. We also let
$$\underline{c}=\inf_{e\in \mathbb{S}^{N-1}} c_e\quad \text{and}\quad \overline{c}=\sup_{e\in \mathbb{S}^{N-1}} c_e.$$
As we mentioned in Remark \ref{remark1.5}, one actually has that $\underline{c}=\min_{e\in\mathbb{S}^{N-1}} c_e>0$ and $\overline{c}=\max_{e\in \mathbb{S}^{N-1}} c_e<+\infty$. Fix two real numbers $\alpha$ and $\beta$ such that
$$0<\alpha<\inf_{x\in \T^N} \theta_x \le \sup_{x\in \T^N} \theta_x<\beta<1$$
where $\theta_x$ is defined in \eqref{F1} (remember that $0<\sigma<\theta_x<1-\sigma<1$ for all $x\in\mathbb{T}^N$ with $\sigma\in (0,1/2)$).

For any $R>0$, let $v_R$ and $\omega_R$ denote the solutions of the Cauchy problems
\begin{eqnarray}\label{vR}
\left\{\begin{aligned}
&(v_R)_t=\Delta v_R+f(x,v_R),\quad t>0,\ x\in\R^N,\\
&v_R(0,x)=\beta\, \text{ for } |x|<R,\  v_R(0,x)=0\,\text{ for } |x|\ge R,
\end{aligned}
\right.
\end{eqnarray}
and
\begin{eqnarray}\label{omegaR}
\left\{\begin{aligned}
&(\omega_R)_t=\Delta \omega_R+f(x,\omega_R),\quad t>0,\ x\in\R^N,\\
&\omega_R(0,x)=\alpha\, \text{ for } |x|<R,\  \omega_R(0,x)=1\,\text{ for } |x|\ge R.
\end{aligned}
\right.
\end{eqnarray}

\begin{lemma}\label{lemma3.1}
There is $R>0$ such that the following holds: for all $\varepsilon\in(0,\underline{c}]$, there is $T_{\varepsilon}>0$ such that
\be\label{ct}
v_R(t,x)\ge 1-\sigma\,\text{ for all $t\ge T_{\varepsilon}$ and $|x|\le (\underline{c}-\varepsilon)t$},
\ee
where $\sigma$ is defined in \eqref{F2}. Furthermore,
\be\label{ct1}
v_R(t,\cdot)\rightarrow 1\ \text{locally uniformly as $t\rightarrow +\infty$}.\ee
\end{lemma}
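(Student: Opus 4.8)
The plan is to split the statement into a \emph{qualitative} part — the local uniform convergence $v_R(t,\cdot)\to1$, which will also pin down the value of $R$ — and a \emph{quantitative} part, namely the lower bound \eqref{ct} on the speed at which the region $\{v_R\approx1\}$ invades, obtained by sliding an expanding, radially shaped subsolution built out of the pulsating fronts $U_e$ underneath $v_R$, in the spirit of the homogeneous analysis in \cite{H}. For the qualitative part I would first observe that $R\mapsto v_R$ is nondecreasing by the parabolic comparison principle, so it suffices to exhibit one $R$ for which spreading holds. Since $\beta>\sup_{x\in\T^N}\theta_x$, the constant $\beta$ is a (strict) subsolution wherever $v_R\le\beta$, and the threshold/invasion mechanism for periodic bistable equations — see Ducrot \cite{D}, already invoked for the sign of $c_e$ — shows that a ball of the over-critical datum $\beta$ of large enough radius $R$ invades, i.e.\ $v_R(t,\cdot)\to1$ locally uniformly as $t\to+\infty$ (alternatively, once $v_R(t_1,\cdot)$ is wide enough one slides a spatial translate of some pulsating front $U_e$ below it and uses $c_e>0$). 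This is \eqref{ct1}, and it fixes $R$ once and for all, independently of $\varepsilon$.

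For the quantitative part, fix $\varepsilon\in(0,\underline{c}]$, set $\lambda:=\underline{c}-\varepsilon/2\in(0,\underline{c})$, and choose $\rho_0$ so large that the spherical curvature $(N-1)/\rho_0$ and the angular error terms below are dominated by $\underline{c}-\lambda=\varepsilon/2$. Using \eqref{ct1}, fix a large radius $\rho_1$ and a time $t_0$ so that $v_R(t,\cdot)$ is as close to $1$ as needed on $\overline{B(0,\rho_1)}$ for all $t\ge t_0$. For $x\neq0$ and $e(x):=x/|x|$, consider the radially shaped candidate
\[
\underline{u}(t,x)=\max\Big(U_{e(x)}\big(|x|-\rho_1+C-\lambda(t-t_0)+\zeta(t),\,x\big)-q(t),\ 0\Big),
\]
where $C$ is the (uniform) front width and $\zeta(t),q(t)\ge0$ are small exponentially decaying Fr\'echet/Fife--McLeod corrections with $\zeta(t_0)=0$ and $q(t_0)$ small. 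On $\{|x|\ge\rho_0\}$ one checks, using the pulsating-front equation \eqref{Ue} in the direction $e(x)$ with speed $c_{e(x)}\ge\underline{c}>\lambda$, that the parabolic defect of $\underline{u}$ equals
\[
(c_{e(x)}-\lambda)\,\partial_\xi U_{e(x)}-\tfrac{N-1}{|x|}\partial_\xi U_{e(x)}-\mathcal{A}+\mathcal{R},
\]
where $\mathcal{A}$ gathers the terms produced by differentiating $e(x)$ and $\mathcal{R}$ gathers the correction and reaction terms. The first term is $\le0$ since $\partial_\xi U_e<0$; where $U_{e(x)}$ is bounded away from $0$ and $1$ it is $\le-\kappa\varepsilon/2<0$ (because $|\partial_\xi U_e|$ has a uniform positive lower bound on the uniformly-compact range in question, by Theorem~\ref{th1} and parabolic estimates), which absorbs the rest once $\rho_0$ is large and $q(t_0),\zeta,q$ are small; near $u=0$ and $u=1$ the degenerating good term is replaced by the reaction gain from $-f_u\ge\gamma$ in \eqref{F2} balanced against $\mathcal{R}$; and everywhere $\mathcal{A}$ and the curvature term are $O(1/\rho_0)$ times quantities decaying at the same exponential rate as $\partial_\xi U_e$ as $\xi\to\pm\infty$. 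Hence $\underline{u}$ is a subsolution on $\{|x|\ge\rho_0\}\times(t_0,+\infty)$. Since $\underline{u}(t_0,\cdot)$ vanishes for $|x|\ge\rho_1$ and lies below $1$ inside, the comparisons $v_R\ge\underline{u}$ hold at $t=t_0$ on $\{|x|\ge\rho_0\}$ and on the lateral boundary $\{|x|=\rho_0\}$ for all $t\ge t_0$, so $v_R\ge\underline{u}$ on $\{|x|\ge\rho_0\}\times[t_0,+\infty)$ by the comparison principle. For $|x|\le(\underline{c}-\varepsilon)t<\lambda t$ and $t$ large the first argument of $U_{e(x)}$ tends to $-\infty$, so $U_{e(x)}(\cdot)\to1$ uniformly in $e$ (cf.\ the end of Section~2) and $q(t)\to0$, giving $v_R(t,x)\ge\underline{u}(t,x)\ge1-\sigma$; quantifying ``$t$ large'' (so that $(\underline{c}-\varepsilon)t\le\lambda(t-t_0)+\rho_1-C$ and the corrections are $\le\sigma$) yields the threshold $T_\varepsilon$, and the remaining range $|x|\le\rho_0$ in \eqref{ct} is covered directly by \eqref{ct1}.

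The main obstacle is exactly the verification that the radially shaped candidate is a subsolution for $|x|\ge\rho_0$: applying $\partial_t-\Delta$ to the composite map $x\mapsto U_{e(x)}(\,\cdot\,,x)$ produces, besides the benign spherical curvature $-\tfrac{N-1}{|x|}\partial_\xi U_{e(x)}$, terms involving the Fr\'echet derivatives $U'_e$, $\partial_\xi U'_e$, $\nabla_y U'_e$ and $U''_e$ of the front with respect to the propagating direction, multiplied by the first and second $x$-derivatives of $e(x)=x/|x|$, which are $O(1/|x|)$ and $O(1/|x|^2)$. Controlling these uniformly in $e$ — including their exponential decay as $\xi\to\pm\infty$, obtained from the linearised version of Lemma~\ref{ASY} — is precisely what the continuity and double differentiability of $(U_e,c_e)$ in $e$, i.e.\ Theorems~\ref{th1} and~\ref{th2} together with the uniform bounds collected at the end of Section~2, provide; this is the heart of the argument, and it is also the reason one restricts to $|x|\ge\rho_0$, where $e(x)$ is smooth, rather than letting $x\to0$. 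A subsidiary task is the bookkeeping of the corrections $\zeta,q$ and their signs so that the reaction gain near the stable states dominates the small positive error terms there.
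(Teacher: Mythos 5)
Your quantitative step is essentially the paper's own construction (an expanding radially shaped subsolution built from $U_{x/|x|}$, with the curvature term $\tfrac{N-1}{|x|}\partial_\xi U$, the terms in $U'_e,\partial_\xi U'_e,\nabla_y U'_e,U''_e$ of size $O(1/|x|)$, $O(1/|x|^2)$ controlled via Theorems \ref{th1}--\ref{th2} and the uniform bounds at the end of Section 2, and the good terms $-\partial_\xi U\ge k$ and $-f_u\ge\gamma$ absorbing the errors). But there is a genuine gap in your \emph{qualitative} step, which is not an innocent preliminary: \eqref{ct1} is one of the two assertions of the lemma, and it is exactly as hard as the spreading estimate itself. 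The ``threshold/invasion mechanism'' you invoke does not exist as a citable black box here: Ducrot \cite{D} concerns a prey--predator system and is used in this paper only for the sign of $c_e$; no reference in the bibliography gives invasion of $1$ from large balls for periodic bistable equations under (A1)--(A2) (indeed \cite{DGM} \emph{assumes} such convergence, and \cite{Z3} shows how delicate the periodic bistable case is). Your fallback also fails: a pulsating front $U_e(x\cdot e-c_et,x)$ equals $1$ (up to $\varepsilon$) on an unbounded half-space behind its interface, so no spatial translate of it, nor a Fife--McLeod perturbation $U_e-q(t)$, can be placed below $v_R(t_1,\cdot)$, which is close to $0$ outside a compact set in every direction. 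So as written, \eqref{ct1} (and hence the choice of $R$) is assumed rather than proved, and since your proof of \eqref{ct} takes \eqref{ct1} as input, the argument is circular.

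The paper closes this loop without any external invasion result: it first gets a plateau of \emph{fixed} width by comparing $v_R$ with the solution $\varrho_\beta$ issued from the constant $\beta>\sup_x\theta_x$ (which converges to $1$ uniformly) and estimating $\varrho_\beta(T,\cdot)-v_R(T,\cdot)$ by a heat-kernel bound, so that $v_R(T,x)\ge 1-\delta_{\varepsilon_0}$ for $|x|\le R-B$; it then runs the expanding-subsolution argument only for the single value $\varepsilon_0=\underline{c}/2$, for which the required plateau size is a fixed constant, so one fixed $R$ suffices; this yields \eqref{ct} for $\varepsilon\ge\underline c/2$ and, by a limiting argument with $\varrho_{1-\sigma}$, \eqref{ct1}; only then are arbitrarily small $\varepsilon$ treated, using \eqref{ct1} to produce the wider plateaus needed at a later time $T_\varepsilon$. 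You should restructure your proof in this way (or otherwise prove \eqref{ct1} first by the same subsolution with a crude fixed $\varepsilon$). A secondary point: the paper defines its subsolution on all of $\R^N$ by interpolating, via a cutoff $h_\varepsilon$, between $U_{\hat x}$ and the constant $1-\delta$, subtracting a \emph{constant} $\delta_\varepsilon$; your exterior-domain comparison on $\{|x|\ge\rho_0\}$ with an exponentially decaying correction $q(t)$ requires $v_R\ge 1-q(t)$ on the lateral boundary $\{|x|=\rho_0\}$ for all $t\ge t_0$, and since $q(t)\to0$ while you only control $v_R$ up to a fixed small error there, that comparison is not guaranteed; keeping $q$ bounded below by $\delta_\varepsilon$ (or using the paper's global-in-space subsolution) removes this difficulty.
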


\begin{lemma}\label{lemma3.2}
For any $\varepsilon>0$, there are some real numbers $T_{\varepsilon}>0$ and $R_{\varepsilon}>0$ such that for all $R\ge R_{\varepsilon}$, the solution $\omega_R$ satisfies
\begin{align*}
\omega_R(t,x)\le \sigma\,\text{ for all $T_{\varepsilon}\le t\le \frac{R}{\bar{c}+\varepsilon}$ and $|x|\le R-(\bar{c}+\varepsilon)t$}.
\end{align*}
\end{lemma}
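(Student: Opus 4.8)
\textbf{Proof proposal for Lemma~\ref{lemma3.2}.} The plan is to bound $\omega_R$ from above, on the indicated set, by a supersolution of \eqref{eq1.1} whose interface is a sphere retreating at speed $\bar{c}+\varepsilon/2$, assembled out of the pulsating fronts $U_e$ in the inward normal directions. Fix $\varepsilon>0$. Let $g(y)=\sqrt{|y|^{2}+L^{2}}$ be a smoothed modulus ($L>0$ large, fixed later: $|\nabla g|\le1$, $0<\Delta g\le N/L$, $g(y)-|y|\in[0,L]$), put $\rho(s)=R-(\bar{c}+\varepsilon/2)s$, and, for $\delta>0$ small, $0<\nu\le\gamma$, $A_0>0$ large, $\tau>0$ all to be chosen, consider
\[
\overline{\omega}(s,y)=\min\!\Big(1,\ U_{-y/|y|}\big(\rho(s)-g(y)-A_0,\,y\big)+\delta\,e^{-\nu(s-\tau)}\Big),\qquad|y|\ge1,
\]
together with the constant supersolution $\overline{\omega}\equiv\sigma$ used on a fixed central ball. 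Here $U_b$ for $b\neq0$ is as in \eqref{Ub}; the orientation is correct, as $\rho(s)-g(y)-A_0$ is very negative for $|y|$ large (so $\overline{\omega}\approx1$ on the "bath" $\{|y|\ge R\}$) and, once $R$ is large, positive for $|y|$ small (so $\overline{\omega}\approx0$ deep inside), and $\partial_\xi U_e<0$ by Lemma~\ref{lemma2.1}. I would then check: (i) $\overline{\omega}(\tau,\cdot)\ge\omega_R(\tau,\cdot)$; (ii) $\overline{\omega}$ is a supersolution for $s\ge\tau$ on $\{|y|\ge1\}$ once $R$ is large; (iii) $\overline{\omega}(s,y)\le\sigma$ whenever $s\ge T_\varepsilon$ and $|y|\le R-(\bar{c}+\varepsilon)s$. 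Granting these, the comparison principle gives $\omega_R\le\overline{\omega}$ for $s\ge\tau$, and (iii) closes the proof.

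For (i), the crude planar supersolutions of Lemma~\ref{lemma2.3} show the bath invades at some finite speed $C_0$; letting $\tau$ be the fixed first time at which the solution of $\dot a=\sup_x f(x,a)$, $a(0)=\alpha$, drops below $\delta$, one has $\omega_R(\tau,\cdot)\le\delta$ on $\{|y|\le R-C_0\tau\}$ and $\omega_R(\tau,\cdot)\le1$ everywhere. On $\{|y|\le R-C_0\tau\}$, $\overline{\omega}(\tau,\cdot)\ge0+\delta\ge\omega_R(\tau,\cdot)$; on $\{|y|>R-C_0\tau\}$, choosing $A_0\ge A'_\delta+(C_0-\bar{c}-\varepsilon/2)\tau$, where $U_e(-A'_\delta,\cdot)\ge1-\delta$ for all $e$ (uniformly in $e$, by Lemma~\ref{ASY} and Theorem~\ref{th1}), forces the argument of $U_{-y/|y|}$ to be $\le-A'_\delta$, so $\overline{\omega}(\tau,\cdot)=1\ge\omega_R(\tau,\cdot)$.

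Step (ii) is the heart of the matter. Plugging $\overline{\omega}$ into $\partial_s-\Delta-f$ and using $\nabla_y(\rho(s)-g(y))=-\nabla g$, $\Delta_y(\rho(s)-g(y))=-\Delta g$, the degenerate elliptic equation \eqref{Ue} for $U_{-y/|y|}$, and the continuity and double Fréchet-differentiability of $e\mapsto(U_e,c_e)$ with the uniform bounds on $\|U'_e\|,\|\partial_\xi U'_e\|,\|\partial_{y_i}U'_e\|,\|U''_e\|$ (Theorems~\ref{th1},~\ref{th2} and the discussion closing Section~2), one finds that the leading part of $(\partial_s-\Delta-f)\overline{\omega}$ equals $\big(\rho'(s)+c_{-y/|y|}\big)\,\partial_\xi U_{-y/|y|}$ plus error terms that are $O(1/|y|)$ on the core $\{U_{-y/|y|}\in[\sigma/2,1-\sigma/2]\}$ and, by Lemma~\ref{ASY}, exponentially small off the core. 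Since $\rho'(s)=-(\bar{c}+\varepsilon/2)$ and $c_{-y/|y|}\le\bar{c}$, the factor $\rho'(s)+c_{-y/|y|}\le-\varepsilon/2$ multiplies $\partial_\xi U_{-y/|y|}<0$, and on the core $|\partial_\xi U_e|\ge k>0$ uniformly in $e$ (continuity plus compactness of $\mathbb{S}^{N-1}$); this leaves a positive margin $\ge(\varepsilon/4)k$ that dominates the $O(1/|y|)$ errors — on the core $g(y)\approx\rho(s)\ge\varepsilon R/(2(\bar{c}+\varepsilon))$, so $1/|y|$, $1-|\nabla g|^{2}$ and $\Delta g$ are all made small by taking first $L$, then $R$, large — and dominates the amplitude term, which is $\le\delta(\nu+\sup|f_u|)$, made small by taking $\delta$ small. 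Off the core, where $U_{-y/|y|}\le\sigma/2$ or $\ge1-\sigma/2$, one uses instead the uniform stability $-f_u\ge\gamma\ge\nu$ from \eqref{F2} in the classical Fife--McLeod way (exactly as in the sub/supersolutions used to prove Theorem~\ref{th1} and in \cite{FM,H}). Finally $\min(1,\cdot)$ is harmless, $1$ being a solution and a minimum of supersolutions a supersolution; the central part $\overline{\omega}\equiv\sigma$ is a strict supersolution since $\sigma<\theta_x$ gives $-f(x,\sigma)>0$, and it dominates $\omega_R$ on the parabolic boundary of the central cylinder by (i) and by the bound $\omega_R\le\overline{\omega}\le\sigma$ already obtained on the connecting sphere.

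For (iii): if $s\ge T_\varepsilon$ and $|y|\le R-(\bar{c}+\varepsilon)s$ then $g(y)\le|y|+L\le R-(\bar{c}+\varepsilon)s+L$, hence $\rho(s)-g(y)-A_0\ge(\varepsilon/2)s-L-A_0\ge(\varepsilon/2)T_\varepsilon-L-A_0$, which exceeds the threshold beyond which $U_e\le\sigma/2$ for every $e$, once $T_\varepsilon$ is chosen large in terms of $L$, $A_0$, $\varepsilon$; adding $\delta e^{-\nu(s-\tau)}\le\delta\le\sigma/2$ gives $\overline{\omega}(s,y)\le\sigma$. I expect step (ii) to be the real obstacle: the pulsating fronts $U_e$ are globally planar, so forcing them to cohere into a supersolution with a curved spherical interface produces curvature terms of order $1/\rho(s)$ and terms measuring the variation of $U_e$ in $e$, and these are exactly what the continuity and differentiability results of Section~2 (with their uniform derivative bounds) are there to control, and what makes the threshold $R\ge R_\varepsilon$ necessary — they must be pushed below the margin $(\varepsilon/4)k$ furnished by the extra retreat speed $\varepsilon/2$. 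The singularity of $|y|$ at the center is sidestepped by the smoothed modulus $g$ and by switching to the constant supersolution $\sigma$ on a fixed central ball, and the sub-threshold bulk is absorbed by starting the comparison at the fixed time $\tau$.
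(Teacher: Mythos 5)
Your overall strategy is the paper's: a radially assembled supersolution built from the pulsating fronts $U_{-x/|x|}$ whose interface retreats at speed $\bar{c}+\varepsilon/2$, started at a fixed time after the bulk has dropped below a small level, with the $O(1/|x|)$ terms coming from differentiating the direction controlled by the uniform bounds on $U'_e,\partial_\xi U'_e,\partial_{y_i}U'_e,U''_e$ from Section~2, and $c_{-x/|x|}\le\bar c$, $-\partial_\xi U_e\ge k$ on the core, \eqref{F2} off the core. However, as written your argument has a genuine gap at the center. Your comparison of $\omega_R$ with the front-based $\overline\omega$ is performed on $\{|y|\ge 1\}$, so it requires $\omega_R\le\overline\omega$ on the lateral boundary $\{|y|=1\}\times[\tau,R/(\bar c+\varepsilon)]$; but you obtain the needed smallness of $\omega_R$ on that sphere only from this very comparison (``the bound $\omega_R\le\overline\omega\le\sigma$ already obtained on the connecting sphere''), which is circular. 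Reading it instead as a single patched function (front-based outside, $\equiv\sigma$ on the central ball) does not help: on the gluing sphere the front-based values are far below $\sigma$ for the relevant times, so the patched function jumps upward inward and is not a (generalized/viscosity) supersolution there -- a test function touching from below with strict normal slack can have arbitrarily large $\Delta$. The paper removes the center problem differently: the cutoff $h_\varepsilon(\overline\zeta)$ in \eqref{3.31} replaces the front by the constant $\delta+\delta_\varepsilon$ wherever $\overline\zeta\ge C+\xi_\varepsilon$, in particular on $\{|x|\le C_\varepsilon\}$ thanks to the choice \eqref{eq3.29} of $R_\varepsilon$ and the restriction $t\le R/(\bar c+\varepsilon)$; the supersolution is then globally defined and smooth, one whole-space comparison suffices, and the gluing errors $h'_\varepsilon,h''_\varepsilon$ are absorbed by the time-independent margin $\gamma\delta_\varepsilon$ that the constant shift $\delta_\varepsilon$ produces through \eqref{F2}.

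A second, related soft spot is your off-core absorption. With the Fife--McLeod correction $\delta e^{-\nu(s-\tau)}$ the margin available off the core at time $s$ is only $(\gamma-\nu)\delta e^{-\nu(s-\tau)}$, which at the final time $s\sim R/(\bar c+\varepsilon)$ is exponentially small in $R$; meanwhile, in the deep interior at radii of order one the curvature and direction-derivative terms carry factors $U'_e$, $\partial_\xi U'_e$, $U''_e$ evaluated at large positive $\xi$, and for these Section~2 gives only uniform boundedness and convergence to $0$ as $\xi\to\pm\infty$ -- Lemma~\ref{ASY}, which you invoke, concerns $U_e$ itself, not its Fr\'echet derivatives in $e$. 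So the claimed exponential smallness off the core is unsupported, and it is load-bearing in your design: you would need a new pointwise exponential decay estimate for $U'_e(h)$ and $U''_e$ in $\xi$, uniform in $e$, with rate beating $\nu$. The paper's construction is arranged precisely to avoid this: the shift $\delta_\varepsilon$ is constant in time (margin $\gamma\delta_\varepsilon$ uniform on the whole interval $[\tau_\varepsilon,R/(\bar c+\varepsilon)]$), the direction-derivative terms are multiplied by $h_\varepsilon$ and hence vanish identically in the deep interior, and where they are present they are bounded by the fixed thresholds in \eqref{C1} using only $|x|\ge C_\varepsilon$. Unless you prove the extra decay estimates or redesign along these lines, steps (ii)--(iii) do not close as stated.
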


Lemma \ref{lemma3.1} and Lemma \ref{lemma3.2} could be viewed as analogs of Lemma 4.1 and Lemma 4.2 in \cite{H} for spatially homogeneous bistable case. However, regarding to our spatially periodic case, pulsating fronts are depending on the propagating direction $e\in\mathbb{S}^{N-1}$ and propagating speeds are different for different directions in general, which also implies the method in \cite{H} can not apply here directly.
\vskip 0.3cm

\begin{proof}[Proof of Lemma \ref{lemma3.1}]
{\it Step 1: choice of some parameters.}
Let us set $\delta=\frac{\sigma}{2},$
where $\sigma$ is defined in \eqref{F2}. Since $\lim_{\xi\rightarrow \pm\infty}U_e(\xi,y)=0$, $1$ uniformly for $y\in\R^N$ and $e\in\mathbb{S}^{N-1}$, there exists a constant $C>0$ independent of $e$ such that
$$U_e(\xi,y)\ge 1-\delta,\text{ for all $\xi\le -C$, $y\in\R^N$ and $e\in\mathbb{S}^{N-1}$},$$
and
$$
U_e(\xi,y)\le \delta,\text{ for all $\xi\ge C$, $y\in\R^N$ and $e\in\mathbb{S}^{N-1}$}.
$$
Since $\partial_{\xi} U_e$ is negative and continuous on $(\xi,y)\in\R\times\R^N$ and recalling that $\partial_{\xi} U_e$ is continuous with respect to $e\in\mathbb{S}^{N-1}$, there is a constant $k>0$  such that $-\partial_{\xi} U_e\ge k$ on $[-C,C]\times\R^N$ for all $e\in\mathbb{S}^{N-1}$. For any $\varepsilon\in(0,\underline{c}]$, let $\delta_{\varepsilon}$ such that
\be\label{delta1}
0<\delta_{\varepsilon}= \min\left(\delta,\frac{\varepsilon k}{8L}\right),
\ee
where $L:=\max_{(x,u)\in\R^N\times[0,1]}|f_u(x,u)|$. Let $C_{\varepsilon}\ge 3$ large enough such that
\be\label{C1}
\frac{N\sqrt{N}}{C_{\varepsilon}} \sup_{e\in\mathbb{S}^{N-1}}\left(3\|U'_e\|+2\|\partial_{\xi} U'_e\|+\frac{2}{N} \sum_{i=1}^N \|\partial_{y_i} U'_e\|+\frac{\sqrt{N}}{C_{\varepsilon}}\|U''_e\|\right)\le \min\left\{\frac{1}{3}\gamma\delta_{\varepsilon},\frac{\varepsilon k}{8}\right\},
\ee
where $\gamma$ is defined in \eqref{F2}, together with
\be\label{C2}
\frac{N-1}{C_{\varepsilon}}\le \frac{\varepsilon}{4}.
\ee
Similar as the definition of $C$, there exists $C'_{\varepsilon}>0$ independent of $e$ such that
$$U_e(\xi,y)\ge 1-\delta_{\varepsilon},\text{ for all $\xi\le -C'_{\varepsilon}$, $y\in\R^N$ and $e\in\mathbb{S}^{N-1}$},$$
and
$$
U_e(\xi,y)\le \delta_{\varepsilon},\text{ for all $\xi\ge C'_{\varepsilon}$, $y\in\R^N$ and $e\in\mathbb{S}^{N-1}$}.
$$

Let us now introduce an auxiliary function. It is elementary to check that there is $C^2$ function $h_{\varepsilon}:\R\rightarrow [0,1]$ such that for some $\xi_{\varepsilon}>0$,
\begin{align*}
h_{\varepsilon}(\xi)=0 \text{ for $\xi\le-\xi_{\varepsilon}-C$},\ h_{\varepsilon}(\xi)=1 \text{ for $\xi\ge -C$}\,\text{ and }\,0\le h_{\varepsilon}'(\xi)\le 1 \text{ for $\xi\in \R$}.
\end{align*}
Furthermore, we choose $\xi_{\varepsilon}$ large enough such that $h_{\varepsilon}'(\xi)$ and $h_{\varepsilon}''(\xi)$ are so small that
\be\label{h1}
2\left(|\partial_{\xi} U_{e}(\xi,y)|+|\nabla_y U_{e}(\xi,y)|\right) h_{\varepsilon}'(\xi)\le \frac{1}{3}\gamma \delta_{\varepsilon}\,\text{ for all $\xi\in\R$, $y\in\R^N$ and $e\in\mathbb{S}^{N-1}$},
\ee
and
\be\label{h2}
\delta |h_{\varepsilon}''(\xi)|\le \frac{1}{3}\gamma \delta_{\varepsilon}\,\text{ for all $\xi\in\R$}.
\ee

{\it Step 2: proof when $\underline{c}/2\le\varepsilon\le \underline{c}$.}
To do so, it is sufficient to show that Lemma \ref{lemma3.1} holds with $\varepsilon=\varepsilon_0:=\underline{c}/2>0$, for some $R>0$.

Let $\varrho_{\beta}(t,x)$ be the solution of \eqref{eq1.1} with initial condition $\varrho_{\beta}(0,x)=\beta$ for $x\in \R^N$. Since $\beta\in(\sup_{x\in \T^N} \theta_x,1)$, there holds $\varrho_{\beta}(t,x)\rightarrow 1$ as $t\rightarrow +\infty$ uniformly in $x\in\R^N$, and there is $T>0$ such that $\varrho_{\beta}(T,x)\ge 1-\delta_{\varepsilon_0}/2$ for all $x\in\R^N$. From the maximum principle, it follows that
$$0\le \varrho_{\beta}(T,x)-v_R(T,x)\le \frac{e^{LT}}{(4\pi T)^{N/2}}\int_{|y|\ge R} e^{-\frac{|x-y|^2}{4T}} dy$$
for all $R>0$ and $x\in \R^N$. Thus, if $0<B\le R$ and $|x|\le R-B$, one has that
$$0\le \varrho_{\beta}(T,x)-v_R(T,x)\le \frac{e^{LT}}{(4\pi T)^{N/2}}\int_{|z|\ge B} e^{-\frac{|z|^2}{4T}} dz.$$
Therefore, there exists a constant $B>0$ such that, for all $R\ge B$ and $|x|\le R-B$, $\varrho_{\beta}(T,x)-v_R(T,x)\le \delta_{\varepsilon_0}/2$. Then, it holds that
\begin{equation}\label{3.10}
v_R(T,x)\ge \varrho_{\beta}(T,x)-\frac{\delta_{\varepsilon_0}}{2}\ge 1-\delta_{\varepsilon_0}\ \text{for all $R\ge B$ and $|x|\le R-B$}.
\end{equation}

Let us set
\begin{align}\label{R}
R=\xi_{\varepsilon_0}+C+C_{\varepsilon_0}+C'_{\varepsilon_0}+B>B>0.
\end{align}
For the family of pulsating fronts $U_e(\xi,y)$ with $c_e$, we treat the direction $e$ as a variation $\hat{x}=\frac{x}{|x|}$ for $x\in\R^N\setminus\{0\}$ and we can get that $(U_{\hat{x}}(\xi,y),c_{\hat{x}})$ satisfies
\be\label{eq:3.13}
c_{\hat{x}} \partial_{\xi} U_{\hat{x}} +\partial_{\xi\xi} U_{\hat{x}} +2\nabla_y \partial_{\xi} U_{\hat{x}}\cdot {\hat{x}}+\Delta_y U_{\hat{x}} +f(y,U_{\hat{x}})=0,\text{ for all $(\xi,y)\in\R\times\R^N$ and $x\in\R^N\setminus\{0\}$}.
\ee
For all $(t,x)\in [T,+\infty)\times\R^N$, we set
\be\label{unv}
\underline{v}(t,x):=\max\big\{U_{\hat{x}}(\underline{\zeta}(t,x),x)h_{\varepsilon_0}(\underline{\zeta}(t,x))+(1-\delta)(1-h_{\varepsilon_0}(\underline{\zeta}(t,x))-\delta_{\varepsilon_0} ,\ 0\big\},
\ee
where
\be\label{zeta}
\underline{\zeta}(t,x)=|x|-\left(\underline{c}-\frac{\varepsilon_0}{2}\right)(t-T)-\xi_{\varepsilon_0}-C-C_{\varepsilon_0}.
\ee
Notice that, when $t\ge T$ and $|x|\le C_{\varepsilon_0}$, then $h_{\varepsilon_0}(\underline{\zeta}(t,x))=0$. Hence \eqref{unv} makes sense for $x=0$, even if $U_{\hat{x}}$ is not defined when $x=0$. Let us then check that $\underline{v}$ is a subsolution for the problem satisfied by $v_R$, for $t\ge T$ and $x\in\R^N$.

First, at the time $T$, it follows from \eqref{3.10} and the definition of $\underline{v}$ that
$$v_R(T,x)\ge 1-\delta_{\varepsilon_0}\ge \underline{v}(T,x)\,\text{ for all $|x|\le R-B$}.$$
On the other hand, if $|x|\ge R-B$, then $|x|-\xi_{\varepsilon_0}-C-C_{\varepsilon_0}\ge C'_{\varepsilon_0}$ from \eqref{R}, hence $\underline{\zeta}(t,x)\ge C'_{\varepsilon_0}>0<-C$ and $h_{\varepsilon_0}(\underline{\zeta}(t,x))=1$. From the definition of $C'_{\varepsilon_0}$ and the fact that $v_R\ge 0$ in $(0,+\infty)\times\R^N$, one has that
$$\underline{v}(T,x)\le 0\le v_R(T,x)\,\text{ for all $|x|\ge R-B$}.$$
Thus,
$$v_R(T,x)\ge \underline{v}(T,x)\,\text{ for all $x\in\R^N$}.$$

Let us now check that
\be\label{Lv}
L\underline{v}=\underline{v}_t-\Delta \underline{v}-f(x,\underline{v})\le 0,
\ee
for all $t\ge T$ and $x\in \R^N$ such that $\underline{v}>0$.
Let $(t,x)$ be any point in $[T,+\infty)\times\R^N$ such that $\underline{v}(t,x)>0$. For $(t,x)\in [T,+\infty)\times\R^N$ such that $\underline{\zeta}(t,x)< -\xi_{\varepsilon_0}-C$, one has that $h_{\varepsilon_0}(\underline{\zeta}(t,x))=0$ and
$$\underline{v}(t,x)=1-\delta-\delta_{\varepsilon_0}\ge 1-\sigma.$$
Furthermore, by continuity of $\underline{\zeta}$, this property holds in a neighborhood of such a point $(t,x)$ in $[T,+\infty)\times\R^N$.
Thus, there holds
\begin{align*}
L \underline{v}=&\underline{v}_t -\Delta \underline{v}-f(x,\underline{v})\\
=& -f(x,1-\delta-\delta_{\varepsilon_0})\le 0,
\end{align*}
from \eqref{F2} since $0<\delta_{\varepsilon_0}\le \delta=\sigma/2$.

Consider now $(t,x)\in [T,+\infty)\times\R^N$ such that $\underline{v}(t,x)>0$ and $-\xi_{\varepsilon_0}-C \le \underline{\zeta}(t,x)\le -C$. One has $|x|\ge (\underline{c}-\varepsilon_0/2)(t-T)+C_{\varepsilon_0}\ge C_{\varepsilon_0}\ge 3>0$ and
\be\label{eq3.17}
1-\delta\le U_{\hat{x}}(\underline{\zeta}(t,x))<1\,\text{ and }\,\underline{v}(t,x)\ge 1-\delta-\delta_{\varepsilon_0}\ge 1-\sigma.
\ee
After some calculations and from \eqref{eq:3.13}, there holds that
\begin{align*}
L \underline{v}=&\underline{v}_t -\Delta \underline{v}-f(x,\underline{v})\\
=& (c_{\hat{x}}-\underline{c}+\frac{\varepsilon_0}{2})\partial_{\xi} U_{\hat{x}} h_{\varepsilon_0} -(\underline{c}-\frac{\varepsilon_0}{2})[U_{\hat{x}}-(1-\delta)]h_{\varepsilon_0}' -\partial_{\xi} U_{\hat{x}}\frac{N-1}{|x|}h_{\varepsilon_0}-2\partial_{\xi} U_{\hat{x}} h_{\varepsilon_0}'\\
&-2\nabla_y U_{\hat{x}}\cdot \frac{x}{|x|} h_{\varepsilon_0}'-[U_{\hat{x}}-(1-\delta)]h_{\varepsilon_0}''-[U_{\hat{x}}-(1-\delta)]\frac{N-1}{|x|}h_{\varepsilon_0}' +f(x,U_{\hat{x}})h_{\varepsilon_0}\\
&-f(x,U_{\hat{x}}h_{\varepsilon_0}+(1-\delta)(1-h_{\varepsilon_0})-\delta_{\varepsilon_0}) -2\sum_{i=1}^{N}\partial_{\xi}U'_{\hat{x}}(\hat{x}_{x_i})(\underline{\zeta}(t,x),x)\frac{x_i}{|x|}h_{\varepsilon_0}\\ &-2\sum_{i=1}^{N}\partial_{y_i}U'_{\hat{x}}(\hat{x}_{x_i})(\underline{\zeta}(t,x),x) h_{\varepsilon_0}-2\sum_{i=1}^{N} U'_{\hat{x}}(\hat{x}_{x_i})(\underline{\zeta}(t,x),x)\frac{x_i}{|x|}h'_{\varepsilon_0} \\
&-\sum_{i=1}^{N}U''_{\hat{x}} (\hat{x}_{x_i})(\hat{x}_{x_i})(\underline{\zeta}(t,x),x) h_{\varepsilon_0} -\sum_{i=1}^{N}U'_{\hat{x}}(\hat{x}_{x_ix_i})(\underline{\zeta}(t,x),x)h_{\varepsilon_0},
\end{align*}
where $\underline{v}$ and all its derivatives are taken at $(t,x)$, $h_{\varepsilon_0}$ and all its derivatives are taken at $\underline{\zeta}(t,x)$, $U_{\hat{x}}$ and all its derivatives are taken at $(\underline{\zeta}(t,x),x)$, and for $i=1,\cdots,N$,
$$\hat{x}_{x_i}=\left(-\frac{x_1x_i}{|x|^3},\cdots,\frac{1}{|x|}-\frac{x_i^2}{|x|^3},\cdots,-\frac{x_Nx_i}{|x|^3}\right),$$
$$\hat{x}_{x_ix_i}=\left(-\frac{x_1}{|x|^3}+3\frac{x_1 x_i^2}{|x|^5},\cdots,-3\frac{x_i}{|x|^3}+3\frac{x_i^3}{|x|^5},\cdots,-\frac{x_N}{|x|^3}+3\frac{x_Nx_i^2}{|x|^5}\right).$$
Notice that $|\hat{x}_{x_i}|\le \sqrt{N}/|x|$ and $|\hat{x}_{x_ix_i}|\le \sqrt{N}/|x|$ for all $i=1,\cdots,N$ (remember that $|x|\ge C_{\varepsilon}\ge 3$). Hence,
\begin{align*}
L \underline{v}\le& \frac{\varepsilon_0}{2} \partial_{\xi} U_{\hat{x}} h_{\varepsilon_0}-\partial_{\xi} U_{\hat{x}}\frac{N-1}{|x|} h_{\varepsilon_0} -\left(2\partial_{\xi} U_{\hat{x}} +2\nabla_y U_{\hat{x}}\cdot \frac{x}{|x|}\right)h_{\varepsilon_0}'-[U_{\hat{x}}-(1-\delta)]h_{\varepsilon_0}''\\
&+f(x,U_{\hat{x}})h_{\varepsilon_0}-f(x,U_{\hat{x}}h_{\varepsilon_0}+(1-\delta)(1-h_{\varepsilon_0})-\delta_{\varepsilon_0})+2\sum_{i=1}^{N}\|\partial_{\xi} U'_{\hat{x}}\||\hat{x}_{x_i}| \\
&+2\sum_{i=1}^{N}\|\partial_{y_i}U'_{\hat{x}}\||\hat{x}_{x_i}|+2\sum_{i=1}^{N} \|U'_{\hat{x}}\||\hat{x}_{x_i}| +\sum_{i=1}^{N}\|U''_{\hat{x}}\| |\hat{x}_{x_i}|^2 +\sum_{i=1}^{N} \|U'_{\hat{x}}\||\hat{x}_{x_ix_i}|\\
\le& \frac{\varepsilon_0}{2} \partial_{\xi} U_{\hat{x}} h_{\varepsilon_0}-\partial_{\xi} U_{\hat{x}}\frac{N-1}{|x|} h_{\varepsilon_0} -\left(2\partial_{\xi} U_{\hat{x}} +2\nabla_y U_{\hat{x}}\cdot \frac{x}{|x|}\right)h_{\varepsilon_0}'-[U_{\hat{x}}-(1-\delta)]h_{\varepsilon_0}''\\
&+f(x,U_{\hat{x}})h_{\varepsilon_0}-f(x,U_{\hat{x}}h_{\varepsilon_0}+(1-\delta)(1-h_{\varepsilon_0})-\delta_{\varepsilon_0})\\
&+\frac{N\sqrt{N}}{|x|}\sup_{e\in\mathbb{S}^{N-1}}\left(3\|U'_e\|+2\|\partial_{\xi} U'_e\|+\frac{2}{N}\sum_{i=1}^N\|\partial_{y_i} U'_e\|+\frac{\sqrt{N}}{|x|}\|U''_e\|\right),
\end{align*}
since $c_{\hat{x}}\ge \underline{c}$, $\partial_{\xi} U_{\hat{x}}<0$, $0\le h_{\varepsilon_0}\le 1$, $U_{\hat{x}}\ge 1-\delta$, $0\le h_{\varepsilon_0}'\le 1$ and \eqref{eq3.17}. Since $\underline{\zeta}(t,x)\ge -\xi_{\varepsilon_0}-C$, that is, $|x|\ge (\underline{c}-\frac{\varepsilon_0}{2})(t-T)+C_{\varepsilon_0}\ge C_{\varepsilon_0}$ and from \eqref{C2}, one has that
\be\label{eq3.12}
\frac{\varepsilon_0}{2} \partial_{\xi} U_{\hat{x}} h_{\varepsilon_0}-\partial_{\xi} U_{\hat{x}}\frac{N-1}{|x|} h_{\varepsilon_0} \le \frac{\varepsilon_0}{4}\partial_{\xi} U_{\hat{x}}h_{\varepsilon_0}\le 0.
\ee
Then, from \eqref{C1}, \eqref{h1} and \eqref{h2}, it follows that
\be\label{eq3.13}
-\left(2\partial_{\xi} U_{\hat{x}} +2\nabla_y U_{\hat{x}}\cdot \frac{x}{|x|}\right)h_{\varepsilon_0}'-[U_{\hat{x}}-(1-\delta)]h_{\varepsilon_0}''\le \frac{2}{3}\gamma\delta_{\varepsilon_0},
\ee
and
\be\label{eq3.14}
\frac{N\sqrt{N}}{|x|}\sup_{e\in\mathbb{S}^{N-1}}\left(3\|U'_e\|+2\|\partial_{\xi} U'_e\|+\frac{2}{N}\sum_{i=1}^N\|\partial_{y_i} U'_e\|+\frac{\sqrt{N}}{|x|}\|U''_e\|\right)\le \frac{1}{3}\gamma\delta_{\varepsilon_0}.
\ee
On the other hand, one can calculate that
\begin{align}
&f(x,U_{\hat{x}})h_{\varepsilon_0}-f(x,U_{\hat{x}}h_{\varepsilon_0}+(1-\delta)(1-h_{\varepsilon_0})-\delta_{\varepsilon_0})\nonumber\\
=&-f(x,U_{\hat{x}})(1-h_{\varepsilon_0})+f_u(x,U_1(t,x))\left([U_{\hat{x}}-(1-\delta)](1-h_{\varepsilon_0})+\delta_{\varepsilon_0}\right).\label{eq3.21}
\end{align}
where $U_1(t,x)=U_{\hat{x}}-\theta[U_{\hat{x}}-(1-\delta)](1-h_{\varepsilon_0})-\theta\delta_{\varepsilon_0}$ for some $\theta(t,x)\in[0,1]$.
Since $U_{\hat{x}}(\underline{\zeta}(t,x))\ge 1-\delta$ for $-\xi_{\varepsilon_0}-C\le \underline{\zeta}(t,x)\le -C$ and then $U_1(t,x)\ge 1-\delta-\delta_{\varepsilon_0}\ge 1-\sigma$, it follows from \eqref{F2} and \eqref{eq3.21} that
\be\label{eq3.15}
f(x,U_{\hat{x}})h_{\varepsilon_0}-f(x,U_{\hat{x}}h_{\varepsilon_0}+(1-\delta)(1-h_{\varepsilon_0})-\delta_{\varepsilon_0})\le -\gamma\delta_{\varepsilon_0}.
\ee
Thus, it concludes from \eqref{eq3.12}-\eqref{eq3.14} and \eqref{eq3.15} that for any $(t,x)\in [T,+\infty)\times\R^N$ such that $\underline{v}(t,x)>0$ and  $-\xi_{\varepsilon_0}-C\le \underline{\zeta}(t,x)\le -C$,
$$L \underline{v}=\underline{v}_t -\Delta \underline{v}-f(x,\underline{v})\le 0.$$

For any $(t,x)\in [T,+\infty)\times\R^N$ such that $\underline{v}(t,x)>0$ and $\underline{\zeta}(t,x)> -C$, one has that
$$|x|\ge C_{\varepsilon_0},\ h_{\varepsilon_0}(\underline{\zeta}(t,x))=1,\text{ and }\underline{v}(t,x)=U_{\hat{x}}(\underline{\zeta}(t,x),x)-\delta_{\varepsilon_0},$$
and the same properties hold in a neighborhood of $(t,x)$ in $[T,+\infty)\times\R^N$.
After some calculations, there holds
\begin{align*}
L \underline{v}=&\underline{v}_t -\Delta \underline{v}-f(x,\underline{v})\\
=& (c_{\hat{x}}-\underline{c}+\frac{\varepsilon_0}{2})\partial_{\xi} U_{\hat{x}}-\partial_{\xi}U_{\hat{x}}\frac{N-1}{|x|} +f(x,U_{\hat{x}})-f(x,U_{\hat{x}}-\delta_{\varepsilon_0})\\
&-2\sum_{i=1}^{N}\partial_{\xi}U'_{\hat{x}}(\hat{x}_{x_i})(\underline{\zeta}(t,x),x) \frac{x_i}{|x|}-2\sum_{i=1}^{N}\partial_{y_i}U'_{\hat{x}}(\hat{x}_{x_i})(\underline{\zeta}(t,x),x)\\
&-\sum_{i=1}^{N}U''_{\hat{x}}(\hat{x}_{x_i})(\hat{x}_{x_i})(\underline{\zeta}(t,x),x)-\sum_{i=1}^{N}U'_{\hat{x}}(\hat{x}_{x_ix_i})(\underline{\zeta}(t,x),x)\\
\le& \frac{\varepsilon_0}{4}\partial_{\xi} U_{\hat{x}} +f(x,U_{\hat{x}})-f(x,U_{\hat{x}}-\delta_{\varepsilon_0})\\
&+\frac{N\sqrt{N}}{|x|}\sup_{e\in\mathbb{S}^{N-1}}\left(\|U'_e\|+2\|\partial_{\xi} U'_e\|+\frac{2}{N}\sum_{i=1}^N\|\partial_{y_i} U'_e\|+\frac{\sqrt{N}}{|x|}\|U''_e\|\right)
\end{align*}
from \eqref{eq3.12}. If $-C< \underline{\zeta}(t,x)\le C$, then
$$-\partial_{\xi} U_{\hat{x}}(\underline{\zeta}(t,x))\ge k\,\text{ and }\,f(x,U_{\hat{x}})-f(x,U_{\hat{x}}-\delta_{\varepsilon_0})\le L\delta_{\varepsilon_0}.$$
where $L:=\max_{(x,u)\in\R^N\times[0,1]}|f_u(x,u)|$.
From \eqref{delta1} and \eqref{C1}, one concludes that for any $(t,x)\in [T,+\infty)\times\R^N$ such that $\underline{v}(t,x)>0$ and $-C< \underline{\zeta}(t,x)\le C$,
$$L\underline{v}\le -\frac{\varepsilon_0}{4} k+\frac{\varepsilon_0 k}{8}+\frac{\varepsilon_0 k}{8}=0.$$
Finally, if $\underline{\zeta}(t,x)\ge C$, then
$$0< U_{\hat{x}}(\underline{\zeta}(t,x))\le \delta\,\text{ and }\,f(x,U_{\hat{x}})-f(x,U_{\hat{x}}-\delta_{\varepsilon_0})\le -\gamma\delta_{\varepsilon_0}.$$
From \eqref{C1} and $\partial_{\xi} U_{\hat{x}}<0$, it concludes that for any $(t,x)\in [T,+\infty)\times\R^N$ such that $\underline{v}(t,x)>0$ and $\underline{\zeta}(t,x)\ge C$,
$$L\underline{v}\le -\gamma \delta_{\varepsilon_0}+\frac{1}{3}\gamma\delta_{\varepsilon_0}\le 0.$$

As a consequence, it follows from the maximum principle that for all $t\ge T$ and $x\in \R^N$,
\begin{align}\label{3.18}
1\ge v_R(t,x)\ge \underline{v}(t,x)\ge U_{\hat{x}}(\underline{\zeta}(t,x),x)h_{\varepsilon_0}(\underline{\zeta}(t,x))+(1-\delta)(1-h_{\varepsilon_0}(\underline{\zeta}(t,x)))-\delta_{\varepsilon_0}.
\end{align}
But
$$\max_{|x|\le (\underline{c}-\varepsilon_0)t} \underline{\zeta}(t,x)\le (\underline{c}-\varepsilon_0)t-\left(\underline{c}-\frac{\varepsilon_0}{2}\right)(t-T)\rightarrow -\infty\,\text{ as $t\rightarrow +\infty$},$$
from \eqref{zeta} and the positivity of $\xi_{\varepsilon_0}$, $C$, $C_{\varepsilon_0}$. Since $h_{\varepsilon_0}(\xi)=0$ for $\xi\le -\xi_{\varepsilon_0}-C$ and \eqref{3.18}, there is $T_{\varepsilon_0}>T>0$ such that
\be\label{eq3.23}
1\ge v_R(t,x)\ge 1-\delta-\delta_{\varepsilon_0}\ge 1-\sigma\,\text{ for all $t\ge T_{\varepsilon_0}$ and $|x|\le (\underline{c}-\varepsilon_0)t$}.
\ee
Then, for any sequence $(t_n)_{n\in\mathbb{N}}$ such that $t_n\rightarrow +\infty$ as $n\rightarrow +\infty$, the sequence $v_n(t,x):=v(t+t_n,x)$ converges, up to a subsequence, to a solution $v_{\infty}(t,x)$ of \eqref{eq1.1} locally uniformly in $C^{1,2}(\R\times\R^N)$ and $v_\infty\ge 1-\sigma$ by \eqref{eq3.23}. Let $\varrho_{1-\sigma}(t,x)$ be the solution of \eqref{eq1.1} with initial condition $\varrho_{1-\sigma}(0,x)=1-\sigma$ for $x\in \R^N$. Then, $\varrho_{1-\sigma}(t,x)$ is a subsolution of the problem satisfied by $v_{\infty}(t,x)$ and $\varrho_{1-\sigma}(t,x)\rightarrow 1$ as $t\rightarrow +\infty$ since $1-\sigma>\theta_x$ for all $x\in\mathbb{T}^N$. Thus, one has that $v_{\infty}(t,x)\equiv 1$ and
\begin{align}\label{3.19}
v_R(t,x)\rightarrow 1\,\text{locally uniformly as $t\rightarrow +\infty$}.
\end{align}

{\it Step 3: proof when $0<\varepsilon\le \underline{c}$.}
We only have to show that the conclusion holds for $0<\varepsilon< \varepsilon_0$. Let now $\varepsilon$ be arbitrary in $(0,\varepsilon_0)$. We borrow the notions from Step 1 and set
\be\label{R2}
R_{\varepsilon}=\xi_{\varepsilon}+C+C_{\varepsilon}+C'_{\varepsilon}>0.
\ee
From \eqref{3.19}, there is $T_{\varepsilon}\ge T$ such that
$$v_R(T_{\varepsilon},x)\ge 1-\delta_{\varepsilon}\,\text{ for all $|x|\le R_{\varepsilon}$}.$$

We also define $\underline{v}$ and $\underline{\zeta}$ as in \eqref{unv} and \eqref{zeta} with $T$ and $\varepsilon_0$ replaced by $T_{\varepsilon}$ and $\varepsilon$. Following the same calculations as in Step 3, one gets that \eqref{Lv} holds for all $(t,x)\in [T_{\varepsilon},+\infty)\times\R^N$ such that $\underline{v}(t,x)>0$. We only have to compare $v_R$ and $\underline{v}$ at time $T_{\varepsilon}$. If $|x|\le R_{\varepsilon}$, then $v_R(t,x)\ge 1-\delta_{\varepsilon}\ge \underline{v}(T_{\varepsilon},x)$. If $|x|\ge R_{\varepsilon}$, then
$$\underline{\zeta}(T_{\varepsilon},x)=|x|-\xi_{\varepsilon}-C-C_{\varepsilon}\ge R_{\varepsilon}-\xi_{\varepsilon}-C-C_{\varepsilon}=C'_{\varepsilon}$$
from \eqref{zeta} and \eqref{R2}, whence $h_{\varepsilon}(\underline{\zeta}(T_{\varepsilon},x))=1$, $U_{\frac{x}{|x|}}(\underline{\zeta}(T_{\varepsilon},x))\le \delta_{\varepsilon}$ and $\underline{v}(T_{\varepsilon},x)=0\le v_R(T_{\varepsilon},x)$. Thus,
$$v_R(t,x)\ge \underline{v}(T_{\varepsilon},x)\,\text{ for all $x\in\R^N$}.$$
Therefore, it follows from the maximum principle that
$$v_R(t,x)\ge \underline{v}(t,x)\ge U_{\hat{x}}(\underline{\zeta}(t,x),x)h_{\varepsilon}(\underline{\zeta}(t,x))+(1-\delta)(1-h_{\varepsilon}(\underline{\zeta}(t,x))-\delta_{\varepsilon}\,\text{ for all $t\ge T_{\varepsilon}$ and $x\in\R^N$}.$$
As in Step 2, this leads to \eqref{ct} and \eqref{ct1}. This completes the proof.
\end{proof}
\vskip 0.3cm

Now we prove Lemma \ref{lemma3.2} in a similar way.
\vskip 0.3cm

\begin{proof}[Proof of Lemma \ref{lemma3.2}]
Take any $\varepsilon>0$. We borrow some notions from the proof of Lemma \ref{lemma3.1}, that is, $\delta$, $C$, $k$, $\delta_{\varepsilon}$, $C_{\varepsilon}$ and $C'_{\varepsilon}$ are defined as in Step 1 of the proof of Lemma \ref{lemma3.1}. On the other hand, the auxiliary function $h_{\varepsilon}$ needs some modification, that is, one chooses a $C^2$ function $h_{\varepsilon}:\R\rightarrow [0,1]$ such that for some $\xi_{\varepsilon}>0$,
$$
h_{\varepsilon}(\xi)=0 \text{ for $\xi\ge \xi_{\varepsilon}+C$},\ h_{\varepsilon}(\xi)=1 \text{ for $\xi\le C$}\,\text{ and }\,-1\le h_{\varepsilon}'(\xi)\le 0 \text{ for $\xi\in \R$}.
$$
Furthermore, we choose $\xi_{\varepsilon}$ large enough such that $h_{\varepsilon}'(\xi)$ and $h_{\varepsilon}''(\xi)$ are so small that
\be\label{eq3.25}
2\left(|\partial_{\xi} U_{e}(\xi,y)|+|\nabla_y U_{e}(\xi,y)|\right) |h_{\varepsilon}'(\xi)|\le \frac{1}{3}\gamma \delta_{\varepsilon}\,\text{ for all $\xi\in\R$, $y\in\R^N$ and $e\in\mathbb{S}^{N-1}$},
\ee
and
\be\label{eq3.26}
\delta |h_{\varepsilon}''(\xi)|\le \frac{1}{3}\gamma \delta_{\varepsilon}\,\text{ for all $\xi\in\R$}.
\ee

Let $\varrho_{\alpha}(t,x)$ be the solution of \eqref{eq1.1} with initial condition $\varrho_{\alpha}(0,x)=\alpha$ for $x\in \R^N$. Since $\alpha\in(0,\inf_{x\in \T^N} \theta_x)$, there holds $\varrho_{\alpha}(t,x)\rightarrow 0$ as $t\rightarrow +\infty$, and there is $\tau_{\varepsilon}>0$ such that $\varrho_{\alpha}(\tau_{\varepsilon},x)\le \delta_{\varepsilon}/2$ for all $x\in\mathbb{R}^N$. From the maximum principle, it follows that there exists $B_{\varepsilon}>0$ such that, for all $R\ge B_{\varepsilon}$ and $|x|\le R-B_{\varepsilon}$, $0\ge \varrho_{\alpha}(\tau_{\varepsilon},x)-\omega_R(\tau_{\varepsilon},x)\ge -\delta_{\varepsilon}/2$, whence
\begin{equation}\label{eq3.27}
\omega_R(\tau_{\varepsilon},x)\le \varrho_{\alpha}(\tau_{\varepsilon},x)+\frac{\delta_{\varepsilon}}{2}\le \delta_{\varepsilon}\ \text{for all $R\ge B_{\varepsilon}$ and $|x|\le R-B_{\varepsilon}$}.
\end{equation}

We choose $T_{\varepsilon}\ge \tau_{\varepsilon}$ such that
\be\label{eq3.28}
\frac{\varepsilon t}{2}\ge C+\xi_{\varepsilon}+B_{\varepsilon}+C'_{\varepsilon}\,\text{ for all $t\ge T_{\varepsilon}$},
\ee
and $R_{\varepsilon}>0$ such that
\be\label{eq3.29}
R_{\varepsilon}\ge \max\left(B_{\varepsilon},(\bar{c}+\varepsilon)T_{\varepsilon}\right)\,\text{ and }\, \frac{\varepsilon R_{\varepsilon}}{2(\bar{c}+\varepsilon)}\ge B_{\varepsilon}+C+\xi_{\varepsilon}+C'_{\varepsilon}+C_{\varepsilon}.
\ee

In the sequel, let $R$ be an arbitrary real number such that $R\ge R_{\varepsilon}$. For the family of pulsating fronts $U_e(\xi,y)$ with $c_e$, we treat the direction $e$ as a variation $\tilde{x}=-\frac{x}{|x|}$ for $x\in\R^N\setminus\{0\}$ and we can get that $(U_{\tilde{x}}(\xi,y),c_{\tilde{x}})$ satisfies
\be\label{eq3.30}
c_{\tilde{x}} \partial_{\xi} U_{\tilde{x}} +\partial_{\xi\xi} U_{\tilde{x}} +2\nabla_y \partial_{\xi} U_{\tilde{x}}\cdot \tilde{x}+\Delta_y U_{\tilde{x}} +f(y,U_{\tilde{x}})=0,\text{ for all $(\xi,y)\in\R\times\R^N$ and $x\in\R^N\setminus\{0\}$}.\ee
Set
$$E:=\left[\tau_{\varepsilon},\frac{R}{\bar{c}+\varepsilon}\right]\times\R^N.$$
For all $(t,x)\in E$, we set
\begin{equation}\label{3.31}
\bar{\omega}(t,x):=\min\big\{U_{\tilde{x}}(\overline{\zeta}(t,x),x)h_{\varepsilon}(\overline{\zeta}(t,x))+\delta(1-h_{\varepsilon}(\overline{\zeta}(t,x))+\delta_{\varepsilon},\ 1\big\},
\end{equation}
where
\be\label{eq3.31}
\overline{\zeta}(t,x)=-|x|-\left(\bar{c}+\frac{\varepsilon}{2}\right)(t-\tau_{\varepsilon})+R-B_{\varepsilon}-C'_{\varepsilon}.
\ee
Notice that, when $\tau_{\varepsilon}\le t\le R/(\overline{c}+\varepsilon)$ and $|x|\le C_{\varepsilon}$, then $\overline{\zeta}(t,x)\ge C+\xi_{\varepsilon}$ by \eqref{eq3.29} and $h_{\varepsilon}(\overline{\zeta}(t,x))=0$. Hence \eqref{3.31} makes sense for $x=0$, even if $U_{\tilde{x}}$ is not defined when $x=0$.
Let us check that $\bar{\omega}$ is a supersolution for the problem satisfied by $\omega_R$, in the set $E$.

At the time $\tau_{\varepsilon}$, one can follow from \eqref{eq3.27}, \eqref{eq3.29} and the definition of $\bar{\omega}$ that
$$\omega_R(\tau_{\varepsilon},x)\le \delta_{\varepsilon}\le \bar{\omega}(\tau_{\varepsilon},x)\,\text{ for all $|x|\le R-B_{\varepsilon}$}.$$
On the other hand, if $|x|\ge R-B_{\varepsilon}$, then $\overline{\zeta}(\tau_{\varepsilon},x)=-|x|+R-B_{\varepsilon}-C'_{\varepsilon}\le -C'_{\varepsilon}<0<C$, hence $h(\overline{\zeta}(\tau_{\varepsilon},x))=1$. From the definition of $C'_{\varepsilon}$ and the fact that $\omega_R\le 1$ in $(0,+\infty)\times\R^N$, one has that
$$\bar{\omega}(\tau_{\varepsilon},x)=1\ge \omega_R(\tau_{\varepsilon},x),\text{ for all $|x|\ge R-B_{\varepsilon}$}.$$
Thus,
$$\overline{\omega}(\tau_{\varepsilon},x)\ge \omega_R(\tau_{\varepsilon},x), \text{ for all $x\in\R^N$}.$$

Let us now check that
$$L\bar{\omega}=\bar{\omega}_t-\Delta \bar{\omega} -f(x,\bar{\omega})\ge 0$$
for all $(t,x)\in E$ such that $\bar{\omega}(t,x)<1$. This will be sufficient to ensure that $\overline{\omega}$ is a supersolution.
Let $(t,x)$ be any point in $E$ such that $\bar{\omega}(t,x)<1$. For $(t,x)\in E$ such that $\overline{\zeta}(t,x)> C+\xi_{\varepsilon}$, one has that $h_{\varepsilon}(\overline{\zeta}(t,x))=0$ and
$$\bar{\omega}(t,x)=\delta+\delta_{\varepsilon}\le \sigma.$$
Thus, there holds
\begin{align*}
L \bar{\omega}=\bar{\omega}_t -\Delta \bar{\omega}-f(x,\bar{\omega})= -f(x,\delta+\delta_{\varepsilon})\ge 0,
\end{align*}
from \eqref{F2} since $0<\delta_{\varepsilon}\le \delta\le \sigma/2$.

Consider now $(t,x)\in E$ such that $\overline{\omega}(t,x)< 1$ and $C \le \overline{\zeta}(t,x)\le C+\xi_{\varepsilon}$. One has $|x|\ge -(\overline{c}+\varepsilon/2)(t-\tau_{\varepsilon})+R-B_{\varepsilon}-C'_{\varepsilon}-C-\xi_{\varepsilon}\ge C_{\varepsilon}\ge 3>0$ by \eqref{eq3.29} and
$$0< U_{\tilde{x}}(\overline{\zeta}(t,x))\le \delta\,\text{ and }\,\bar{\omega}(t,x)\le \delta+\delta_{\varepsilon}\le \sigma.$$
After some calculations and from \eqref{eq3.30}, there holds that
\begin{align*}
L \bar{\omega}=&\bar{\omega}_t -\Delta \bar{\omega}-f(x,\bar{\omega})\\
=& (c_{\tilde{x}}-\bar{c}-\frac{\varepsilon}{2})\partial_{\xi} U_{\tilde{x}} h_{\varepsilon} -(\bar{c}+\frac{\varepsilon}{2})(U_{\tilde{x}}-\delta)h_{\varepsilon}' +\partial_{\xi} U_{\tilde{x}}\frac{N-1}{|x|}h_{\varepsilon}-2\partial_{\xi} U_{\tilde{x}} h_{\varepsilon}'\\
&+2\nabla_y U_{\tilde{x}}\cdot \frac{x}{|x|} h_{\varepsilon}' -(U_{\tilde{x}}-\delta)h_{\varepsilon}'' +(U_{\tilde{x}}-\delta)\frac{N-1}{|x|}h_{\varepsilon}'+f(x,U_{\tilde{x}})h_{\varepsilon}\\
&-f(x,U_{\tilde{x}}h_{\varepsilon}+\delta(1-h_{\varepsilon})+\delta_{\varepsilon}) +2\sum_{i=1}^{N}\partial_{\xi}U'_{\tilde{x}}(\tilde{x}_{x_i})(\overline{\zeta}(t,x),x) \frac{x_i}{|x|}h_{\varepsilon}\\
&-2\sum_{i=1}^{N}\partial_{y_i}U'_{\tilde{x}}(\tilde{x}_{x_i})(\overline{\zeta}(t,x),x) h_{\varepsilon} +2\sum_{i=1}^{N} U'_{\tilde{x}}(\tilde{x}_{x_i})(\overline{\zeta}(t,x),x) \frac{x_i}{|x|}h'_{\varepsilon}\\
&-\sum_{i=1}^{N}U''_{\tilde{x}}(\tilde{x}_{x_i})(\tilde{x}_{x_i})(\overline{\zeta}(t,x),x) h_{\varepsilon}-\sum_{i=1}^{N}U'_{\tilde{x}}(\tilde{x}_{x_ix_i})(\overline{\zeta}(t,x),x)h_{\varepsilon}
\end{align*}
where $\overline{\omega}$ and all its derivatives are taken at $(t,x)$, $h_{\varepsilon_0}$ and all its derivatives are taken at $\overline{\zeta}(t,x)$, $U_{\tilde{x}}$ and all its derivatives are taken at $(\overline{\zeta}(t,x),x)$, and
$$\tilde{x}_{x_i}=\left(\frac{x_1x_i}{|x|^3},\cdots,-\frac{1}{|x|}+\frac{x_i^2}{|x|^3},\cdots,\frac{x_Nx_i}{|x|^3}\right),$$
$$\tilde{x}_{x_ix_i}=\left(\frac{x_1}{|x|^3}-3\frac{x_1 x_i^2}{|x|^5},\cdots,3\frac{x_i}{|x|^3}-3\frac{x_i^3}{|x|^5},\cdots,\frac{x_N}{|x|^3}-3\frac{x_Nx_i^2}{|x|^5}\right).$$
Notice that $|\tilde{x}_{x_i}|\le \sqrt{N}/|x|$ and $|\tilde{x}_{x_ix_i}|\le \sqrt{N}/|x|$ for all $i=1,\cdots,N$ (remember that $|x|\ge C_{\varepsilon}\ge 3$). Hence,
\begin{align*}
L\overline{\omega}\ge& -\frac{\varepsilon}{2} \partial_{\xi} U_{\tilde{x}} h_{\varepsilon}-(\bar{c}+\frac{\varepsilon}{2})(U_{\tilde{x}}-\delta)h_{\varepsilon}'+\partial_{\xi} U_{\tilde{x}}\frac{N-1}{|x|} h_{\varepsilon} +\left(-2\partial_{\xi} U_{\tilde{x}} +2\nabla_y U_{\tilde{x}}\cdot \frac{x}{|x|}\right)h_{\varepsilon}'\\
&-(U_{\tilde{x}}-\delta)h_{\varepsilon}''+f(x,U_{\tilde{x}})h_{\varepsilon}-f(x,U_{\tilde{x}}h_{\varepsilon} +\delta(1-h_{\varepsilon})+\delta_{\varepsilon})\\
&-\frac{N\sqrt{N}}{|x|} \sup_{e\in\mathbb{S}^{N-1}}\left(3\|U'_{e}\|+2\|\partial_{\xi} U'_{e}\|+\frac{2}{N}\sum_{i=1}^N\|\partial_{y_i} U'_{e}\|+\|U''_{e}\|\right),
\end{align*}
since $c_{\tilde{x}}\le \bar{c}$, $0<U_{\tilde{x}}\le \delta$, $h_{\varepsilon}\le 1$, and $-1\le h'_{\varepsilon}\le 0$. From $|x|\ge C_{\varepsilon}$ and \eqref{C2}, one has that
\be\label{eq3.32}
-\frac{\varepsilon}{2} \partial_{\xi} U_{\tilde{x}} h_{\varepsilon}+\partial_{\xi} U_{\tilde{x}}\frac{N-1}{|x|} h_{\varepsilon} \ge -\frac{\varepsilon}{4}\partial_{\xi} U_{\tilde{x}}h_{\varepsilon}\ge 0.
\ee
Then, from \eqref{C1}, \eqref{eq3.25} and \eqref{eq3.26}, it follows that
\be\label{eq3.33}
\left(-2\partial_{\xi} U_{\tilde{x}} +2\nabla_y U_{\tilde{x}}\cdot \frac{x}{|x|}\right)h_{\varepsilon}'-(U_{\tilde{x}}-\delta)h''_{\varepsilon}\ge -\frac{2}{3}\gamma\delta_{\varepsilon},
\ee
and
\be\label{eq3.34}
-\frac{N\sqrt{N}}{|x|} \sup_{e\in\mathbb{S}^{N-1}}\left(3\|U'_{e}\|+2\|\partial_{\xi} U'_{e}\|+\frac{2}{N}\sum_{i=1}^N\|\partial_{y_i} U'_{e}\|+\|U''_{e}\|\right)\ge -\frac{1}{3}\gamma\delta_{\varepsilon}.
\ee
On the other hand, one can calculate that,
\begin{align}
&f(x,U_{\tilde{x}})h_{\varepsilon}-f(x,U_{\tilde{x}}h_{\varepsilon}+\delta(1-h_{\varepsilon})+\delta_{\varepsilon})\nonumber\\
=&-f(x,U_{\tilde{x}})(1-h_{\varepsilon})-f_u(x,U_2(t,x)) \left((\delta-U_{\tilde{x}})(1-h_{\varepsilon})+\delta_{\varepsilon}\right).\label{eq3.35}
\end{align}
where $U_2(t,x)=U_{\tilde{x}}-\theta(U_{\tilde{x}}-\delta)(1-h_{\varepsilon})+\theta\delta_{\varepsilon}$ for some $\theta(t,x)\in[0,1]$.
Since $U_{\tilde{x}}(\overline{\zeta}(t,x))\le \delta$ for $C\le \overline{\zeta}(t,x)\le \xi_{\varepsilon}+C$ and then $U_2(t,x)\le \delta+\delta_{\varepsilon}\le \sigma$, it follows from \eqref{F2} and \eqref{eq3.35} that
\be\label{eq3.36}
f(x,U_{\tilde{x}})h_{\varepsilon}-f(x,U_{\tilde{x}}h_{\varepsilon}+\delta(1-h_{\varepsilon})+\delta_{\varepsilon})\ge \gamma\delta_{\varepsilon}.
\ee
Thus, it concludes from \eqref{eq3.32}-\eqref{eq3.34} and \eqref{eq3.36} that for any $(t,x)\in E$ such that $\overline{\omega}(t,x)<1$ and $C\le \overline{\zeta}(t,x)\le \xi_{\varepsilon}+C$,
$$
L \underline{v}=\underline{v}_t -\Delta \underline{v}-f(x,\underline{v})\ge 0.
$$

For any $(t,x)\in E$ such that $\overline{\omega}(t,x)<1$ and $\overline{\zeta}(t,x)< C$, one has that
$$|x|\ge C_{\varepsilon},\ h_{\varepsilon}(\overline{\zeta}(t,x))=1,\text{ and }\bar{\omega}(t,x)=U_{\tilde{x}}(\overline{\zeta}(t,x),x)+\delta_{\varepsilon_0},$$
and the same properties hold in a neighborhood of $(t,x)$ in $E$.
After some calculations, there holds
\begin{align*}
L \bar{\omega}=&\bar{\omega}_t -\Delta \bar{\omega}-f(x,\bar{\omega})\\
=& (c_{\tilde{x}}-\bar{c}-\frac{\varepsilon}{2})\partial_{\xi} U_{\tilde{x}} +\partial_{\xi} U_{\tilde{x}}\frac{N-1}{|x|} +f(x,U_{\tilde{x}})-f(x,U_{\tilde{x}}+\delta_{\varepsilon})\\
&+2\sum_{i=1}^{N}\partial_{\xi}U'_{\tilde{x}}(\tilde{x}_{x_i})(\overline{\zeta}(t,x),x) \frac{x_i}{|x|}-2\sum_{i=1}^{N}\partial_{y_i}U'_{\tilde{x}}(\tilde{x}_{x_i})(\overline{\zeta}(t,x),x)\\
&-\sum_{i=1}^{N}U''_{\tilde{x}}(\tilde{x}_{x_i})(\tilde{x}_{x_i})(\overline{\zeta}(t,x),x) -\sum_{i=1}^{N}U'_{\tilde{x}}(\tilde{x}_{x_ix_i})(\overline{\zeta}(t,x),x)\\
\ge& -\frac{\varepsilon}{4}\partial_{\xi} U_{\tilde{x}} +f(x,U_{\tilde{x}})-f(x,U_{\tilde{x}}+\delta_{\varepsilon})\\
&-\frac{N\sqrt{N}}{|x|} \sup_{e\in\mathbb{S}^{N-1}} \left(\|U'_{e}\|+2\|\partial_{\xi} U'_e\|+\frac{2}{N}\sum_{i=1}^N\|\partial_{y_i} U'_e\|+\|U''_e\|\right)
\end{align*}
from \eqref{eq3.32}. If $-C\le \overline{\zeta}(t,x)< C$, then
$$-\partial_{\xi} U_{\tilde{x}}(\overline{\zeta}(t,x))\ge k\,\text{ and }\,f(x,U_{\tilde{x}})-f(x,U_{\tilde{x}}+\delta_{\varepsilon})\ge - L\delta_{\varepsilon}.$$
From \eqref{delta1} and \eqref{C1}, one concludes that for any $(t,x)\in E$ such that $\overline{\omega}(t,x)<1$ and $-C\le \overline{\zeta}(t,x)< C$,
$$L \bar{\omega}\ge \frac{\varepsilon}{4} k-\frac{\varepsilon k}{8}-\frac{\varepsilon k}{8}=0.$$
Finally, if $\overline{\zeta}(t,x)\le -C$, then
$$1-\delta\le U_{\tilde{x}}(\overline{\zeta}(t,x))< 1\,\text{ and }\,f(x,U_{\tilde{x}})-f(x,U_{\tilde{x}}+\delta_{\varepsilon})\ge \gamma\delta_{\varepsilon}.$$
From \eqref{C1} and $\partial_{\xi} U_{\frac{x}{|x|}}<0$, it concludes that for $(t,x)\in E$ such that $\overline{\omega}(t,x)<1$ and $\overline{\zeta}(t,x)\le -C$
$$L \bar{\omega}\ge \gamma \delta_{\varepsilon}-\frac{1}{3}\gamma\delta_{\varepsilon}\ge 0.$$

As a conclusion, it follows from the maximum principle that for all $(t,x)\in [\tau_{\varepsilon}, R\slash(\bar{c}+\varepsilon)]\times\R^N$,
$$0\le \omega_R(t,x)\le \bar{\omega}(t,x)\le U_{\tilde{x}}(\overline{\zeta}(t,x),x)h_{\varepsilon}(\overline{\zeta}(t,x))+\delta(1-h_{\varepsilon}(\overline{\zeta}(t,x))+\delta_{\varepsilon}.$$
For all $T_{\varepsilon}\le t\le R/(\bar{c}+\varepsilon)$ and $|x|\le R-(\bar{c}+\varepsilon)t$, it follows from \eqref{eq3.28} that
\begin{align*}
\overline{\zeta}(t,x)=-|x|-\left(\bar{c}+\frac{\varepsilon}{2}\right)(t-\tau_{\varepsilon})+R-B_{\varepsilon}-C'_{\varepsilon}\ge \frac{\varepsilon}{2} t + \left(\bar{c}+ \frac{\varepsilon}{2}\right) \tau_{\varepsilon} -B_{\varepsilon}-C'_{\varepsilon}\ge C+\xi_{\varepsilon}.
\end{align*}
Thus, $h_{\varepsilon}(\overline{\zeta}(t,x))=0$ and
$$\omega_R(t,x)\le \bar{\omega}(t,x)=\delta+\delta_{\varepsilon}\le \sigma.$$
This completes the proof.
\end{proof}


\subsection{Proof of Theorem \ref{th1.3}}
\noindent
This section is devoted to prove
$$\inf_{e\in\mathbb{S}^{N-1}} c_e\,\le\,\liminf_{|t-s|\rightarrow+\infty}\frac{d(\Gamma_t,\Gamma_s)}{|t-s|}\,\le\,\limsup_{|t-s|\rightarrow+\infty} \frac{d(\Gamma_t,\Gamma_s)}{|t-s|}\le\,\sup_{e\in\mathbb{S}^{N-1}} c_e.$$
Once we have the two-key lemmas, Lemma \ref{lemma3.1} and Lemma \ref{lemma3.2}, one can follow the proof of \cite[Theorem 2.7]{H} to get Theorem \ref{th1.3}. But we still sketch it for completeness. Since the second inequality is obvious, we only prove the first one and the third one in the following.

{\it Step 1: proof of the first inequality.}
Let $\varepsilon>0$ be arbitrary positive real number. Let us assume by contradiction that
\be\label{eq3.37}
\liminf_{|t-s|\rightarrow+\infty} \frac{d(\Gamma_t,\Gamma_s)}{|t-s|}<\underline{c}-2\varepsilon.
\ee
where $\underline{c}=\inf_{e\in\mathbb{S}^{N-1}} c_e$ (notice that this yields especially $0<\varepsilon\le \underline{c}/2<\underline{c}$).
There are two sequences $(t_k)_{k\in\N}$ and $(s_k)_{k\in\N}$ in $\R$ such that $|t_k-s_k|\rightarrow +\infty$ as $k\rightarrow +\infty$ and
$$d(\Gamma_{t_k},\Gamma_{s_k})<(\underline{c}-2\varepsilon)|t_k-s_k|\,\text{ for all $k\in\N$}.$$
We assume that $t_k<s_k$ for all $k\in\N$ without loss of generality. By definition of distance $d(\Gamma_{t_k},\Gamma_{s_k})$, there are then two sequences $(x_k)_{k\in\N}$ and $(z_k)_{k\in\N}$ in $\R^N$ such that
$$x_k\in \Gamma_{t_k},\ z_k\in\Gamma_{s_k}\,\text{ and }\,|x_k-z_k|<(\underline{c}-2\varepsilon)(s_k-t_k)\,\text{ for all $k\in\N$}.$$

From Definition \ref{TF}, there is $M\ge 0$ such that
\begin{eqnarray*}
\left\{\begin{aligned}
&\forall t\in \mathbb{R},\ \ \forall x\in \Omega_t^+, \ \ \left(d(x,\Gamma_t)\geq M\right)\Rightarrow \left(u(t,x)\geq 1-\sigma\right)\!,\\
&\forall t\in \mathbb{R},\ \ \forall x\in \Omega_t^-, \ \ \left(d(x,\Gamma_t)\geq M\right)\Rightarrow \left(u(t,x)\leq \sigma\right)\!.
\end{aligned}
\right.
\end{eqnarray*}
Let $R>0$ such that Lemma \ref{lemma3.1} holds true with $v_R$ defined for $\beta=1-\sigma$ and $R$. From \eqref{eq1.5}, there are $r_{R+M}$ and $y^+_k$ such that
$$y^+_k\in\Omega_{t_k}^+,\ |x_k-y^+_k|\le r_{R+M}\,\text{ and } d(y^+_k,\Gamma_{t_k})\ge R+M,$$
and $r_{M}$ and $y^-_k$ such that
$$y^-_k\in\Omega_{s_k}^-,\ |z_k-y^-_k|\le r_{M}\,\text{ and } d(y^-_k,\Gamma_{s_k})\ge M.$$
These imply that $B(y^+_k,R)\subset \Omega_{t_k}^+$, $d(B(y^+_k,R),\Gamma_{t_k})\ge M$ and $u(s_k,y^-_{k})\le \sigma$. Thus, $u(t_k,x)\ge 1-\sigma$ for all $x\in B(y^+_k,R)$. Therefore, $u(t_k,x)\ge v_R(0,x-y^+_k)$ for all $x\in\R^N$ and it follows from the maximum principle that
$$u(t,x)\ge v_R(t-t_k,x-y^+_k)\,\text{ for all $k\in\N$, $t>t_k$ and $x\in\R^N$}.$$
Then, by Lemma \ref{lemma3.1}, one has that, for every $k\in\N$,
\be\label{eq3.38}
u(t,x)\ge 1-\sigma\,\text{ for all $t\ge t_k+T_{\varepsilon}$ and $|x-y^+_k|\le (\underline{c}-\varepsilon)(t-t_k)$}.
\ee
Since $s_k-t_k\rightarrow +\infty$ as $k\rightarrow +\infty$, there is $k$ large enough such that $s_k-t_k\ge T_{\varepsilon} \text{ and } \varepsilon(s_k-t_k)\ge r_{R+M}+r_M$.
Since $|y^+_{k}-x_k|\le r_{R+M}$ and $|x_k-z_k|<(\underline{c}-2\varepsilon)(s_k-t_k) $, it follows that $|y^+_{k}-z_{k}|\le r_{R+M}+(\underline{c}-2\varepsilon)(s_k-t_k)$. On the other hand, from $|z_k-y^-_k|\le r_{M}$, we get that $|y^+_{k}-y^-_{k}|\le r_{R+M}+(\underline{c}-2\varepsilon)(s_k-t_k)+r_M\le (\underline{c}-\varepsilon)(s_k-t_k)$. Thus, from \eqref{eq3.38}, $u(s_k,y^-_{k})\ge 1-\sigma$ which contradicts that $u(s_k,y^-_{k})\le \sigma$.

{\it Step 2: proof of the third inequality.}
Let $\varepsilon>0$ be arbitrary positive real number. Let us assume by contradiction that
\be\label{eq3.40}
\limsup_{|t-s|\rightarrow+\infty} \frac{d(\Gamma_t,\Gamma_s)}{|t-s|}>\bar{c}+3\varepsilon.
\ee
where $\bar{c}=\sup_{e\in\mathbb{S}^{N-1}} c_e$.
Then, there are two sequences $(t_k)_{k\in\N}$ and $(s_k)_{k\in\N}$ in $\R$ that $|t_k-s_k|\rightarrow +\infty$ as $k\rightarrow+\infty$ and
$$d(\Gamma_{t_k},\Gamma_{s_k})>(\bar{c}+3\varepsilon)|t_k-s_k|\,\text{ for all $k\in\N$}.$$
We assume that $t_k<s_k$ for all $k\in\N$ without loss of generality. For each $k\in\N$, take a point $z_k$ on $\Gamma_{s_k}$. There are two sequences $(y^{\pm}_k)_{k\in\N}$ such that
$$y^{\pm}_k\in\Omega^{\pm}_{s_k},\ |z_k-y^{\pm}_k|\le r_M,\ d(y^{\pm}_k,\Gamma_{s_k})\ge M.$$
It implies that
\be\label{eq3.41}
u(s_k,y^-_{k})\le \sigma< 1-\sigma\le u(s_k,y^+_{k}).
\ee
On the other hand, since $d(z_k,\Gamma_{t_k})>(\bar{c}+3\varepsilon)(s_k-t_k)>0$, there holds
$$\text{either }\,B(z_k,(\bar{c}+3\varepsilon)(s_k-t_k))\subset\Omega^+_{t_k}\,\text{ or }B(z_k,(\bar{c}+3\varepsilon)(s_k-t_k))\subset\Omega^-_{t_k}.$$

Assume by contradiction that, up to a subsequence,
$$B(z_k,(\bar{c}+3\varepsilon)(s_k-t_k))\subset\Omega^+_{t_k},$$
for all $k\in\mathbb{N}$. Since $s_k-t_k\rightarrow +\infty$ as $k\rightarrow+\infty$, one has $B(z_k,R)\subset \Omega^+_{t_k}$ with $d(B(z_k,R),\Gamma_{t_k})\ge M$ for all $k$ large enough. Thus, $u(t_k,x)\ge 1-\sigma\,\text{ for all $x\in B(z_k,R)$}$.
Then, $u(t_k,x)\ge v_R(0,x-z_k)$ for all $x\in\R^N$ and
$$u(t,x)\ge v_R(t-t_k,x-z_k)\,\text{ for all $k$ large enough, $t>t_k$ and $x\in\R^N$},$$
from the maximum principle. From Lemma \ref{lemma3.1}, for $\varepsilon'=\underline{c}/2$, there is $T_{\varepsilon'}>0$ such that, for all $k$ large enough,
$$u(t,x)\ge v_R(t-t_k,x-z_k)\ge 1-\sigma\,\text{ for all $t\ge t_k+T_{\varepsilon'}$ and $|x-z_k|\le (\underline{c}-\varepsilon')(t-t_k)=\frac{\underline{c}}{2}(t-t_k)$}.$$
Since $\underline{c}>0$ and $s_k-t_k\rightarrow +\infty$, one has $s_k-t_k\ge T_{\varepsilon'}$ and $|y^-_{k}-z_k|\le r_M\le \underline{c}/2(s_k-t_k)$ for all $k$ large enough. Therefore, $u(s_k,y^-_{k})\ge 1-\sigma$ for all $k$ large enough which contradicts  \eqref{eq3.41}.

Hence, for all $k$ large enough,
$$B(z_k,(\bar{c}+3\varepsilon)(s_k-t_k))\subset\Omega^-_{t_k}.$$
Since $s_k-t_k\rightarrow +\infty$ as $k\rightarrow +\infty$, it follows that $B(z_k,(\bar{c}+2\varepsilon)(s_k-t_k))\subset\Omega^-_{t_k}$ and $d(B(z_k,(\bar{c}+2\varepsilon)(s_k-t_k)),\Gamma_{t_k})\ge M$. Hence, $u(t_k,x)\le \sigma$ for all $x\in B(z_k,(\bar{c}+2\varepsilon)(s_k-t_k))$ and then $u(t_k,x)\le \omega_{(\bar{c}+2\varepsilon)(s_k-t_k)}(0,x-z_k)$ for all $x\in\R^N$ where $\omega_R$ is defined in \eqref{omegaR} with $\alpha=\sigma$. From the maximum principle, it follows that
$$u(t,x)\le \omega_{(\bar{c}+2\varepsilon)(s_k-t_k)}(t-t_k,x-z_k)\,\text{ for all $k$ large enough, $t>t_k$ and $x\in\R^N$}.$$
Since $(\overline{c}+2\varepsilon)(s_k-t_k)\rightarrow +\infty$ as $k\rightarrow +\infty$, if follows from Lemma \ref{lemma3.2} that, for all $k$ large enough,
$$u(t,x)\le \omega_{(\bar{c}+2\varepsilon)(s_k-t_k)}(t-t_k,x-z_k)\le \sigma,$$
for all $T_{\varepsilon}\le t-t_k\le (\bar{c}+2\varepsilon)(s_k-t_k)\slash(\bar{c}+\varepsilon)$ and $|x-z_k|\le (\bar{c}+2\varepsilon)(s_k-t_k)-(\bar{c}+\varepsilon)(t-t_k)$, where $T_{\varepsilon}>0$ is given in Lemma \ref{lemma3.2}. Since for all $k$ large enough, $T_{\varepsilon}\le s_k-t_k\le (\bar{c}+2\varepsilon)(s_k-t_k)\slash(\bar{c}+\varepsilon)$ and $|y^+_{k}-z_k|\le r_M\le (\bar{c}+2\varepsilon)(s_k-t_k)-(\bar{c}+\varepsilon)(s_k-t_k)$, it follows that
$$u(s_k,y^+_{k})\le \sigma.$$
which contradicts \eqref{eq3.41}.

In conclusion, we have shown that \eqref{eq3.37} and \eqref{eq3.40} are impossible for arbitrary $\varepsilon>0$. The proof of Theorem \ref{th1.3} thereby complete.\hfill $\Box$
\vskip 0.3cm

\textbf{Acknowledgement.} The author is grateful to Professor Fran\c{c}ois Hamel for his patient discussions and helpful suggestions.


\end{document}